\providecommand{\U}[1]{\protect\rule{.1in}{.1in}}
\newtheorem{theorem}{Theorem}[section]
\theoremstyle{plain}
\newtheorem{corollary}[theorem]{Corollary}
\newtheorem{definition}{Definition}
\newtheorem{example}[theorem]{Example}
\newtheorem{lemma}[theorem]{Lemma}
\newtheorem{proposition}[theorem]{Proposition}
\numberwithin{equation}{section}
\begin{document}
\title[High density piecewise syndeticity]{High density piecewise syndeticity of sumsets}

\author[Di Nasso et. al.]{Mauro Di Nasso, Isaac Goldbring, Renling Jin, Steven Leth, Martino Lupini,
Karl Mahlburg}
\address{Dipartimento di Matematica, Universita' di Pisa, Largo Bruno
Pontecorvo 5, Pisa 56127, Italy}
\email{dinasso@dm.unipi.it}
\address{Department of Mathematics, Statistics, and Computer Science,
University of Illinois at Chicago, Science and Engineering Offices M/C 249,
851 S. Morgan St., Chicago, IL, 60607-7045}
\email{isaac@math.uic.edu}
\address{Department of Mathematics, College of Charleston, Charleston, SC,
29424}
\email{JinR@cofc.edu}
\address{School of Mathematical Sciences, University of Northern Colorado,
Campus Box 122, 510 20th Street, Greeley, CO 80639}
\email{Steven.Leth@unco.edu}
\address{Department of Mathematics and Statistics, York University, N520
Ross, 4700 Keele Street, M3J 1P3, Toronto, ON, Canada}
\email{mlupini@mathstat.yorku.ca}
\address{Department of Mathematics, Louisiana State University, 228 Lockett
Hall, Baton Rouge, LA 70803}
\email{mahlburg@math.lsu.edu}

\thanks{The authors were supported in part by the American Institute of Mathematics
through its SQuaREs program. I. Goldbring was partially supported by NSF grant
DMS-1262210. M. Lupini was supported by the York University Elia Scholars
Program. K. Mahlburg was supported by NSF Grant DMS-1201435.}
\date{October, 2013}
\keywords{Sumsets of integers, asymptotic density, nonstandard analysis}
\subjclass[2010]{11B13, 11B05, 03H05, 03H15}
\dedicatory{ }
\begin{abstract}
Renling Jin proved that if $A$ and $B$ are two subsets of the natural numbers
with positive Banach density, then $A+B$ is piecewise syndetic. In this paper,
we prove that, under various assumptions on positive lower or upper densities
of $A$ and $B$, there is a high density set of witnesses to the piecewise
syndeticity of $A+B$. Most of the result are shown to hold more generally for
subsets of $\mathbb{Z}^{d}$. The key technical tool is a Lebesgue density
theorem for measure spaces induced by cuts in the nonstandard integers.

\end{abstract}
\maketitle

\section{Introduction and
preliminaries\label{Section: Introduction and preliminaries}}

\subsection{Sumsets and piecewise
syndeticity\label{Subsection: Sumsets and piecewise syndeticity}}

The earliest result on the relationships between density of sequences, sum or
difference sets, and syndeticity is probably Furstenberg's theorem mentioned
in \cite[Proposition 3.19]{furstenberg}: If $A$ has positive upper Banach
density, then $A-A$ is syndetic, i.e.\ has bounded gaps. \ The proof of the
theorem is essentially a pigeonhole argument.

In \cite{jin} Jin shows that if $A$ and $B$ are two subsets of $\mathbb{N}$
with positive upper Banach densities, then $A+B$ must be piecewise syndetic,
i.e.\ for some $m$, $A+B+[0,m]$ contains arbitrarily long intervals. \ Jin's
proof uses nonstandard analysis. \ In \cite{JK}, this result is extended to
abelian groups with tiling structures. In \cite{jin,JK} the question as to
whether this result can be extended to any countable amenable group is posed,
and in \cite{BBF} a positive answer to the above question is proven. It is
shown that if $A$ and $B$ are two subsets of a countable amenable group with
positive upper Banach densities, then $A\cdot B$ is piecewise Bohr, which
implies piecewise syndeticity. \ In fact, a stronger theorem is obtained in
the setting of countable abelian groups: A set $S$ is piecewise Bohr if and
only if $S$ contains the sum of two sets $A$ and $B$ with positive upper
Banach densities. \ Jin's theorem was generalized to arbitrary amenable groups
in \cite{dinasso-lupini}. \ At the same time, several new proofs of the
theorem in \cite{jin} have appeared. For example, an ultrafilter proof is
obtained in \cite{beiglock}. A more quantitative proof that includes a bound
based on the densities is obtained in \cite{dinasso2} by nonstandard methods,
and in \cite{dinasso} by elementary means.

However, there has not been any progress on extending the theorem in
\cite{jin} to lower asymptotic density or upper asymptotic density instead of
upper Banach density. Of course, if $A$ and $B$ have positive lower (upper)
asymptotic densities then they have positive Banach density, so $A+B$ must be
piecewise syndetic. \ In this paper we show that there is significant
uniformity to the piecewise syndetiticy in the sense that there are a large
density of points in the sumset with no gap longer than some fixed $m$.
\ Furthermore this result can be extended to all finite dimensions.
\ Specifically we show that if $A\subseteq\mathbb{Z}^{d}$ has positive
upper\ $d$-dimensional asymptotic density $\alpha$, and $B$ simply has
positive Banach density, then there exists a fixed $m$ such that for all $k$
the upper\ density of the set of elements $z$ in $\mathbb{Z}^{d}$ such that
$z+[-k,k]^{d}\subseteq A+B+[-m,m]^{d}$ is at least $\alpha$. \ For lower
density we show that the analogous conclusion must be slightly weakened as
follows: If $A\subseteq\mathbb{Z}^{d}$ has positive lower\ $d$-dimensional
asymptotic density $\alpha$, and $B$ has positive Banach density, then for any
$\epsilon>0$ there exists a fixed $m$ such that for all $k$ the lower\ density
of the set of elements $z$ in $\mathbb{Z}^{d}$ such that $z+[-k,k]^{d}%
\subseteq A+B+[-m,m]^{d}$ is at least $\alpha-\epsilon$. \ If both $A$ and $B$
have positive lower density in one dimension then, by using Mann's Theorem, we
show that the set of elements $z$ in $\mathbb{Z}$ such that $z+[-k,k]\subseteq
A+B+[-m,m]$ has a strong version of upper density of at least $\alpha+\beta$. \ 

The nonstandard methods used in this paper include a new Lebesgue Density
Theorem for \textquotedblleft cuts\textquotedblright\ in the nonstandard
integers. \ In \cite{KL} a quasi-order-topology, with respect to each additive
cut, was defined on a hyperfinite interval $[0,H]$ of integers. Motivated by
the duality\footnote{The ideal of null sets $\mathcal{N}$ is the collection of
all subsets of $\mathbb{R}$ with Lebesgue measure $0$ and the ideal of meager
sets $\mathcal{M}$ is the collection of all meager subsets of $\mathbb{R}$,
where a set is called meager if it is a countable union of nowhere dense sets.
$\mathcal{N}$ and $\mathcal{M}$ are dual ideals in the sense that $\mathbb{R}$
is the union of a meager set and a null set.} of the ideal of null sets and
the ideal of meager sets of real numbers, and the fact that the sum of two
sets with positive Lebesgue measure can never be meager (because it always
contains a non-empty open interval), a question was raised in \cite{KL}: Is
the sum of any two sets with positive Loeb measure in a hyperfinite interval
$[0,H]$ non-meager in the sense of the quasi-order-topology? \ A positive
answer to the question above led to Jin's result about piecewise syndeticity.
\ Here we study these cuts in $d$ dimensions and prove the following result:
\ If $H\in$ $^{\ast}\mathbb{N}\left\backslash \mathbb{N}\right.  $, $U$ is a
subset of $[1,H]$ that is closed under addition, $\mathcal{U}=(-U)\cup
\{0\}\cup(U),$ and $E$ is an internal subset of $[-H,H]^{d}$ then almost all
points $x$ in $E+\mathcal{U}^{d}$ are points of density in the sense that
\[
\liminf_{\nu>U}\mu_{x+[-\nu,\nu]^{d}}\left(  \left(  E+\mathcal{U}^{d}\right)
\cap(x+\left[  -\nu,\nu\right]  ^{d})\right)  =1,
\]
or, equivalently, to clarify the meaning of $\lim\inf$ in this setting:
\[
\sup_{\xi>U}\inf_{U<\nu<\xi}\mu_{x+[-\nu,\nu]^{d}}\left(  \left(
E+\mathcal{U}^{d}\right)  \cap(x+\left[  -\nu,\nu\right]  ^{d})\right)  =1,
\]
where $\mu_{x+[-\nu,\nu]^{d}}$ is the Loeb measure on $x+[-\nu,\nu]^{d}.$
\ Here, as in the rest of the paper, when we write that an element is greater
than an initial segment we mean that it is larger than every element in that
segment. \ For example $U<\nu$ means that for all $u\in U$, $u<\nu$. \ We use
the density theorem above in the case that $U=\mathbb{N}$ to obtain many of
the aforementioned standard results. \ 

A word about notation:\ \ In an effort to clarify standard vs.\ nonstandard
sets and elements, we will reserve $H,I,J,K,L,M,N$ for infinite hypernatural
numbers, while $\nu,\xi,\zeta$ will denote (possibly standard) hypernatural
numbers; Lower case letters denote elements of $\mathbb{Z}$ or $\mathbb{Z}%
^{d}$ and their nonstandard extensions; $A$ and $B$ will be reserved for
standard subsets of $\mathbb{N}$ (we do not include $0$ in $\mathbb{N}$),
$\mathbb{Z}$, or $\mathbb{Z}^{d}$; $E$,$R,S$, $T,X$ and $Y$ will be used for
subsets of $^{\ast}\mathbb{Z}^{d}$, with $E$ only used for internal sets. If
$\left(  a_{n}\right)  _{n\in\mathbb{N}}$ is a sequence, and $\nu$ is an
infinite hypernatural number, we denote by $a_{\nu}$ the value at $\nu$ of the
nonstandard extension of the sequence $\left(  a_{n}\right)  _{n\in\mathbb{N}%
}$. We use\ $\mu$ for measure, $\operatorname{d}$\ for density functions, and
$d$ for dimension. \ Here the values of $d$ are only natural numbers.
\ Despite these conventions the location of elements and sets is usually noted
at the time, at the risk of redundancy, but in the interest of clarity.

\subsection{Standard concepts of density and structure on
sequences\label{Subsection: Standard concepts of density and structure on sequences}%
}

In this paper we consider the following notions of density for a subset $A$ of
$\mathbb{Z}^{d}$:

\begin{itemize}
\item the \emph{lower (asymptotic) density}%
\[
\underline{\operatorname{d}}(A):=\liminf_{n\rightarrow\infty}\frac
{|A\cap\lbrack-n,n]^{d}|}{(2n+1)^{d}}\text{;}%
\]

\item the \emph{upper (asymptotic) density}%
\[
\overline{\operatorname{d}}(A):=\limsup_{n\rightarrow\infty}\frac
{|A\cap\lbrack-n,n]^{d}|}{(2n+1)^{d}}\text{;}%
\]

\item the \emph{Schnirelmann density}%
\[
\sigma(A):=\inf_{n}\frac{|A\cap\lbrack-n,n]^{d}|}{(2n+1)^{d}}\text{;}%
\]

\item the \emph{(upper) Banach density}%
\[
\operatorname{BD}(A):=\lim_{n\rightarrow\infty}\sup_{x\in\mathbb{Z}^{d}}%
\frac{|A\cap(x+[-n,n]^{d})|}{(2n+1)^{d}}\text{.}%
\]

\end{itemize}

In the particular case of $d=1$ these are the usual notions of density for
sequences of integers. It follows immediately from the definition that for any
$\epsilon>0$ there exists an $m\in\mathbb{N}$ such that
\[
\sigma(A\cup\lbrack-m,m]^{d})\geq\overline{\operatorname{d}}(A)-\epsilon
\text{.}%
\]
Moreover it is useful to note that if $\operatorname{BD}(A)>0$ then for any
$\epsilon>0$ there exists $m\in\mathbb{N}$ such that
\[
\operatorname{BD}(A+[-m,m]^{d})>1-\epsilon.
\]
When $d=1$ this is the content of Theorem 3.8 in \cite{hindman-density}.

We will refer to the following combinatorial notions of largeness for a subset
$A$ of $\mathbb{Z}^{d}$:

\begin{itemize}
\item $A$ is \emph{syndetic} iff there exists $m\in\mathbb{N}$ such that
$A+[-m,m]^{d}=\mathbb{Z}^{d}$;

\item $A$ is \emph{thick }iff there are arbitrarily large hypercubes
completely contained in $A$, i.e.\ for all $k\in\mathbb{N}$ there exists
$z\in\mathbb{Z}^{d}$ such that
\[
z+[-k,k]^{d}\subseteq A;
\]

\item $A$ is \emph{piecewise syndetic }iff there exists $m\in\mathbb{N}$ such
that $A+[-m,m]^{d}$ is thick, i.e. for all $k\in\mathbb{N}$ there exists
$z\in\mathbb{Z}^{d}$ such that
\[
z+[-k,k]^{d}\subseteq A+[-m,m]^{d}.
\]
Thus, $A$ is piecewise syndetic iff it is the intersection of a syndetic set
and a thick set.
\end{itemize}

While defining the densities on sets of the form $[-n,n]^{d}$ is natural, all
of our results involving the notion of upper or lower syndeticity can be
easily adapted to the setting where one considers arbitrary F\o lner
sequences. Of particular interest for all of our results is the case in which
$d=1$ where the interval $[-n,n]$ is replaced by $[1,n]$. \ This is the
classical setting for the study of densities of subsets of natural numbers.
\ To underscore the importance of that case and to improve clarity, almost all
of our examples are specific to this case, although all theorems and proofs
will be given in $d$ dimensions wherever possible.

It is not difficult to show that $\operatorname{BD}(A)=1$ iff $A$ is thick;
more precisely, if for some $k\in\mathbb{N}$ every cube $z+\left[
-k,k\right]  ^{d}$ is not contained in $A$, then $\operatorname{BD}%
(A)\leq\frac{(2k+1)^{d}-1}{(2k+1)^{d}}$. \ On the other hand, for every $r<1$
there exist sets of lower density at least $r$ that are not piecewise
syndetic. Indeed, if $n$ is sufficiently large and%
\[
B=\bigcup_{j=n}^{\infty}\bigcup_{x\in\mathbb{Z}^{d}\setminus\{0\}}\left(
(j!)x+\mathcal{[}1,(j-1)!]^{d}\right)  \text{,}%
\]
then $\mathbb{Z}^{d}\backslash B$ is an example of such a set.

\subsection{Nonstandard
preliminaries\label{Subsection: Nonstandard preliminaries}}

We use \emph{nonstandard analysis} to derive our results and we assume that
the reader is familiar with elementary nonstandard arguments. For an
introduction to nonstandard methods aimed specifically toward applications to
combinatorial number theory see \cite{jinintro}. \ Throughout this paper, we
always work in a countably saturated nonstandard universe.

We make extensive use of the concept of Loeb measure. \ Here we will always be
starting with the counting measure on some internal subset $E$ of $[-H,H]^{d}$
where $H$ is some element in $^{\ast}\mathbb{N}\left\backslash \mathbb{N}%
\right.  $. \ Often $E$ itself is $[-H,H]^{d}$, but it may also be a set of
the form $x+[-J,J]^{d}$ where $x\in\mathbb{Z}^{d}$ and $J\in$ $^{\ast
}\mathbb{N}\left\backslash \mathbb{N}\right.  $. \ For every internal $D$
contained in $E$, the measure of $D$ relative to $E$ is defined to be $\mu
_{E}(D):=\operatorname{st}(\frac{|D|}{|E|})$, where $\operatorname{st}$ is the
standard part mapping. \ This defines a finitely additive measure on the
algebra of internal subsets of $E$, which canonically extends to a countably
additive probability measure on the $\sigma$-algebra of \emph{Loeb measurable}
subsets of $E$, and we will also write $\mu_{E}$ for this extension. \ If $D$
is defined on a larger set than $E$ then we will write simply $\mu_{E}(D)$ for
$\mu_{E}(D\cap E)$.

We will make frequent use of the well-known proposition below, which gives
nonstandard equivalents for the standard density properties. \ Proofs are
included for convenience.

\begin{proposition}
\label{prop:nonstequivalents}If $A$ is a subset of $\mathbb{Z}^{d}$ then we
have the following nonstandard equivalents of the standard asymptotic densities:

\begin{enumerate}
\item if $\overline{\operatorname{d}}(A)\geq\alpha$ then for all $K\in$
$^{\ast}\mathbb{N}\left\backslash \mathbb{N}\right.  $ there exists an $H\in$
$^{\ast}\mathbb{N}\left\backslash \mathbb{N}\right.  $ such that $H<K$ and
$\mu_{[-H,H]^{d}}({}^{\ast}\!{A})\geq\alpha$. Conversely If there exists
$H\in$ $^{\ast}\mathbb{N}\left\backslash \mathbb{N}\right.  $ such that
$\mu_{[-H,H]^{d}}({}^{\ast}\!{A})\geq\alpha$ then $\overline{\operatorname{d}%
}(A)\geq\alpha$;

\item $\underline{\operatorname{d}}(A)\geq\alpha$ iff for all $H\in$ $^{\ast
}\mathbb{N}\left\backslash \mathbb{N}\right.  $ $\ \mu_{[-H,H]^{d}}({}^{\ast
}\!{A})\geq\alpha$;

\item If $\operatorname{BD}(A)\geq\alpha$ then for all $K\in$ $^{\ast
}\mathbb{N}\left\backslash \mathbb{N}\right.  $ there exists $J\in{}\left[
0,K\right]  \left\backslash \mathbb{N}\right.  $ and $x\in\lbrack-K,K]^{d}$
such that $\mu_{x+[-J,J]^{d}}({}^{\ast}\!{A})\geq\alpha$. Conversely if there
exists $J\in$ $^{\ast}\mathbb{N}\left\backslash \mathbb{N}\right.  $ and
$x\in$ $^{\ast}\mathbb{Z}^{d}$ such that $\mu_{x+[-J,J]^{d}}(^{\ast}%
\!A)\geq\alpha$, then $\operatorname{BD}(A)\geq\alpha$.
\end{enumerate}
\end{proposition}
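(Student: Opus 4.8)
The plan is to prove each of the three biconditionals by treating the two implications separately, using throughout the basic identity
$\mu_{x+[-\nu,\nu]^{d}}({}^{\ast}A)=\operatorname{st}\!\big(|{}^{\ast}A\cap(x+[-\nu,\nu]^{d})|/(2\nu+1)^{d}\big)$,
which is immediate from the definition of the Loeb measure since ${}^{\ast}A\cap(x+[-\nu,\nu]^{d})$ is internal. For brevity I will write $r(x,\nu):=|{}^{\ast}A\cap(x+[-\nu,\nu]^{d})|/(2\nu+1)^{d}$ and $r(\nu):=r(0,\nu)$, so that $r$ is the nonstandard extension of the ratio occurring in the standard density definitions.

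\textit{The routine directions.} For the converse in (1): if $\overline{\operatorname{d}}(A)<\gamma<\alpha$, then by definition of $\limsup$ there is $N\in\mathbb{N}$ with $\frac{|A\cap[-n,n]^{d}|}{(2n+1)^{d}}<\gamma$ for all $n\ge N$; transferring and specialising to any infinite $H$ (which is automatically $\ge N$) gives $r(H)<\gamma$, hence $\mu_{[-H,H]^{d}}({}^{\ast}A)\le\gamma<\alpha$, contradicting the hypothesis. The forward direction of (2) is the same in spirit: $\underline{\operatorname{d}}(A)\ge\alpha$ says that for every $\varepsilon>0$ one has $\frac{|A\cap[-n,n]^{d}|}{(2n+1)^{d}}>\alpha-\varepsilon$ eventually, so by transfer $r(H)>\alpha-\varepsilon$ for every infinite $H$ and every standard $\varepsilon>0$, whence $\mu_{[-H,H]^{d}}({}^{\ast}A)\ge\alpha$. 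The converse of (2) is its contrapositive: if $\underline{\operatorname{d}}(A)<\gamma<\alpha$ then the standard set $\{n:\frac{|A\cap[-n,n]^{d}|}{(2n+1)^{d}}<\gamma\}$ is unbounded, so by transfer and overspill the corresponding internal set has an infinite element $H$, and then $\mu_{[-H,H]^{d}}({}^{\ast}A)\le\gamma<\alpha$. The converse of (3) proceeds identically, using that $\operatorname{BD}(A)$ is defined as a limit: from $\operatorname{BD}(A)<\gamma<\alpha$ we get $N$ with $\sup_{x\in\mathbb{Z}^{d}}\frac{|A\cap(x+[-n,n]^{d})|}{(2n+1)^{d}}<\gamma$ for all $n\ge N$, so after transfer $r(x,J)<\gamma$ for every infinite $J$ and every $x\in{}^{\ast}\mathbb{Z}^{d}$, contradicting $\mu_{x+[-J,J]^{d}}({}^{\ast}A)\ge\alpha$.

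\textit{The forward directions of (1) and (3).} Here the subtlety is to place the witness \emph{below} the prescribed infinite $K$ (and, in (3), the centre inside $[-K,K]^{d}$), and for this I would invoke countable saturation. For (1): since $\overline{\operatorname{d}}(A)\ge\alpha$, for each $m\in\mathbb{N}$ the standard set $\{n\in\mathbb{N}: n>m,\ \frac{|A\cap[-n,n]^{d}|}{(2n+1)^{d}}>\alpha-\tfrac1m\}$ is infinite, so the internal set $S_{m}:=\{\nu\in{}^{\ast}\mathbb{N}: m<\nu<K,\ r(\nu)>\alpha-\tfrac1m\}$ is nonempty (it contains arbitrarily large standard $n$, all of which are $<K$). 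The $S_{m}$ decrease with $m$, so by countable saturation $\bigcap_{m}S_{m}\ne\varnothing$; any $H$ in the intersection is infinite, satisfies $H<K$, and has $r(H)>\alpha-\tfrac1m$ for every $m$, hence $\mu_{[-H,H]^{d}}({}^{\ast}A)=\operatorname{st}(r(H))\ge\alpha$. For (3) one runs the same argument with $Z_{m}:=\{(\nu,x): m<\nu\le K,\ x\in[-K,K]^{d},\ r(x,\nu)>\alpha-\tfrac1m\}$, which is nonempty because $\operatorname{BD}(A)\ge\alpha$ provides, for each $m$, arbitrarily large standard $n$ with a standard centre $x_{n}\in\mathbb{Z}^{d}\subseteq[-K,K]^{d}$ satisfying $\frac{|A\cap(x_{n}+[-n,n]^{d})|}{(2n+1)^{d}}>\alpha-\tfrac1m$; a point $(J,x)\in\bigcap_{m}Z_{m}$ then works.

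\textit{Expected obstacle.} The only genuinely non-routine point is this localisation step in (1) and (3): plain transfer together with overspill produces \emph{some} infinite witness but yields no control on its size, so one must instead intersect a countable decreasing family of nonempty internal sets, which is exactly what the standing assumption of countable saturation provides. Everything else reduces to the definition of the Loeb measure combined with the standard $\varepsilon$–$N$ descriptions of $\liminf$, $\limsup$, and Banach density.
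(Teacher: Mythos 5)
Your proof is correct, and it follows essentially the same strategy as the paper: transfer handles the routine directions, and the only real work is placing the witness below the prescribed infinite $K$. The one cosmetic difference is that you obtain that witness by countable saturation applied to the decreasing internal families $S_{m}$ and $Z_{m}$, whereas the paper extends a standard witnessing sequence $(n_{i})$ (resp.\ $(j_{i},x_{i})$) and evaluates it at an infinite index $J$ chosen with $n_{J}<K$ via overspill; both devices are available under the paper's standing assumption of countable saturation and yield the same conclusion.
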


\begin{proof}
$\,$

\begin{enumerate}
\item If $\overline{\operatorname{d}}(A)\geq\alpha$ then there exists a
sequence $n_{i}\rightarrow\infty$ such that for all $i\in\mathbb{N}$
\[
\frac{|A\cap\lbrack-n_{i},n_{i}]^{d}|}{(2n_{i}+1)^{d}}\geq\alpha-\frac{1}%
{i}\text{.}%
\]
Pick an infinite hypernatural number $J$ such that $n_{J}<K$, and observe that
$\mu_{\left[  -n_{J},n_{J}\right]  }({}^{\ast}\!{A})\geq\alpha$. Conversely
suppose that there exists an $H\in$ $^{\ast}\mathbb{N}\left\backslash
\mathbb{N}\right.  $ such that $\mu_{[-H,H]^{d}}({}^{\ast}\!{A})\geq\alpha$.
Given any $\epsilon>0$ and any $m\in\mathbb{N}$ one can deduce by transfer
that there is a natural number $n>m$ such that
\[
\frac{|A\cap\lbrack-n,n]^{d}|}{(2n+1)^{d}}\geq\alpha-\epsilon\text{.}%
\]
Therefore\ $\overline{\operatorname{d}}(A)\geq\alpha$.

\item $\underline{\operatorname{d}}(A)\geq\alpha$ iff for any $\epsilon>0$
there exists $n_{\epsilon}\in\mathbb{N}$ such that for all $n>n_{\epsilon}$
\[
\frac{|A\cap\lbrack-n,n]^{d}|}{(2n+1)^{d}}\geq\alpha-\epsilon.
\]
By transfer, this is true iff for all $H\in$ $^{\ast}\!\mathbb{N}%
\left\backslash \mathbb{N}\right.  $ and every standard $\epsilon>0$
\[
\frac{|{}^{\ast}\!A\cap[-H,H]^{d}|}{(2H+1)^{d}}\geq\alpha-\epsilon,
\]
\ which is equivalent to $\mu_{[-H,H]^{d}}({}^{\ast}\!{A})\geq\alpha$.

\item If $\operatorname{BD}(A)\geq\alpha$ then there exists a sequence\ $j_{i}%
$, with $j_{i}\rightarrow\infty$ and points $x_{i}\in\mathbb{Z}^{d}$ such that
for all $i\in\mathbb{N}$
\[
\frac{|A\cap(x_{i}+[-j_{i},j_{i}]^{d})|}{(2j_{i}+1)^{d}}\geq\alpha-1/i.
\]
We can pick an infinite hypernatural number $L$ such that $j_{L}<K$ and
$x_{L}<K$. \ We may now let $J=j_{L}$ and $x=x_{L}$. Conversely if there exist
$J\in$ $^{\ast}\mathbb{N}\left\backslash \mathbb{N}\right.  $ and $x\in$
$^{\ast}\mathbb{Z}^{d}$ such that $\mu_{x+[-J,J]^{d}}({}^{\ast}\!{A}%
)\geq\alpha$, then given any $\epsilon>0$ and any $m\in\mathbb{N}$ one can
deduce by transfer that there is are natural numbers $j$ and $x$ such that
$j>m$ and
\[
\frac{|A\cap(x+[-j,j]^{d})|}{(2j+1)^{d}}\geq\alpha-\epsilon\text{.}%
\]
This witnesses the fact that $\operatorname{BD}(A)\geq\alpha$.
\end{enumerate}
\end{proof}

\subsection{Acknowledgements}

This work was partly completed during a week-long meeting at the American
Institute for Mathematics on June 3-7, 2013 as part of the SQuaRE (Structured
Quartet Research Ensemble) project \textquotedblleft Nonstandard Methods in
Number Theory.\textquotedblright\ The authors would like to thank the
Institute for this opportunity and for the warm hospitality they enjoyed
during their stay.

\section{Points of Density\label{Section: Points of Density}}

If $E$ is an internal subset of $^{\ast}\mathbb{Z}^{d}$ and $x\in{}^{\ast
}\mathbb{Z}^{d}$ define%
\begin{align*}
d_{E}(x)  &  =\liminf_{\nu>\mathbb{N}}\mu_{x+[-\nu,\nu]^{d}}\left(  \left(
E+\mathbb{Z}^{d}\right)  \cap(x+\left[  -\nu,\nu\right]  ^{d})\right) \\
&  =\sup_{H>\mathbb{N}}\inf_{\mathbb{N}<\nu<H}\mu_{\lbrack-\nu,\nu]^{d}%
}\left(  \left(  \left(  E-x\right)  +\mathbb{Z}^{d}\right)  \cap\left[
-\nu,\nu\right]  ^{d}\right)  \text{.}%
\end{align*}

\begin{definition}
If $d_{E}(x)=1$ we say that $x$ is a \emph{point of density }of $E$, and we
write $\mathcal{D}_{E}$ for the set of points of density of $E$. \ 
\end{definition}

We note that $\mathcal{D}_{E}$ is not, in general, internal and that
$\mathcal{D}_{E}+\mathbb{Z}^{d}\subseteq\mathcal{D}_{E}$. It is easy to see by
countable saturation that for $r\in\left[  0,1\right]  $ and $x\in{}^{\ast
}\mathbb{Z}^{d}$, $d_{E}\left(  x\right)  \geq r$ if and only if there is
$H>\mathbb{N}$ such that for every $\mathbb{N}<\nu<H$%
\[
\mu_{x+[-\nu,\nu]^{d}}\left(  \left(  E+\mathbb{Z}^{d}\right)  \cap(x+\left[
-\nu,\nu\right]  ^{d})\right)  \geq r\text{.}%
\]

Theorem \ref{thm:LDTtheorem1} is our main result concerning points of density,
and can be regarded as an analogue of the Lebesgue density theorem. \ It
implies, in particular, that an internal set of positive Loeb measure relative
to some interval always has points of density.

\begin{theorem}
\label{thm:LDTtheorem1}If $E\subseteq[-H,H]^{d}$ is internal then
$\mathcal{D}_{E}$ is Loeb measurable, and $\mu_{[-H,H]^{d}}(\mathcal{D}%
_{E})=\mu_{[-H,H]^{d}}(E+\mathbb{Z}^{d})$.
\end{theorem}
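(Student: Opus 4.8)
The plan is to reduce the theorem to a purely measure-theoretic density statement about the Loeb space on $[-H,H]^d$, relative to a fixed infinite scale. First I would observe that $\mathcal{D}_E$ depends only on $S := E + \mathbb{Z}^d$, and that $S$ is a countable union of internal sets (translates of $E$ by the standard lattice), hence Loeb measurable as a subset of $[-H,H]^d$; moreover $S + \mathbb{Z}^d = S$. Replacing $E$ by $S$, the content of the theorem becomes: for a Loeb measurable, $\mathbb{Z}^d$-invariant set $S \subseteq [-H,H]^d$, the set of points $x$ with $d_S(x)=1$ differs from $S$ by a Loeb-null set — i.e. $\mu_{[-H,H]^d}(\mathcal{D}_S \,\triangle\, S) = 0$. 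One inclusion is trivial after one notes that for $x \notin S$ we need nothing, so the real claims are (i) almost every point of $S$ is a point of density of $S$, and (ii) $\mathcal{D}_S \setminus S$ is null (equivalently, the complement $[-H,H]^d \setminus S$ has almost all of its points as points of density of its complement, by applying (i) to $([-H,H]^d\setminus S)$, which is also $\mathbb{Z}^d$-invariant); together with measurability of $\mathcal{D}_S$.

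The heart of the argument is (i), and here I would run the standard Lebesgue-density / Vitali-covering strategy adapted to the hyperfinite setting. For a rational threshold $r<1$, let $Z_r = \{x \in S : d_S(x) < r\}$; I want $\mu(Z_r)=0$ for all such $r$, which gives (i) by taking a countable union. The mechanism: if $x\in Z_r$ then by the saturation characterization recorded just before the theorem, its failure to have density $\ge r$ means there are arbitrarily small (relative to any given infinite $H$) infinite scales $\nu$ with $\mu_{x+[-\nu,\nu]^d}(S) < r$; but because $S$ is $\mathbb{Z}^d$-invariant, for such an $x$ one actually gets a \emph{standard}-indexed sequence of scales $n \to \infty$ with $|S \cap (x + [-n,n]^d)| / (2n+1)^d < r + \epsilon$ — that is, the "bad cubes" for $Z_r$ form a fine cover at integer scales. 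Then a Vitali-type covering lemma on the hyperfinite cube $[-H,H]^d$ (selecting a disjointified subfamily of these cubes covering a fixed fraction of $Z_r$, at the cost of a dimensional constant $3^d$) lets me compare $\mu(Z_r)$ against $r$ times itself up to that constant; running this inside each selected cube and iterating, one forces $\mu(Z_r)=0$. I expect the main obstacle to be the covering step: one must extract, uniformly and internally, a Vitali subcover from a family of hypercubes of (possibly incomparable, infinite) side lengths, and the honest way is to fix a large finite collection of candidate scales via saturation, push the covering lemma through by transfer on that finite collection, and then take standard parts — keeping careful track of the fact that the $\liminf$ over $\nu > \mathbb{N}$ is a $\sup$ of $\inf$'s, so "bad at scale $\nu$" is only available cofinally below some $H$, not everywhere.

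For measurability of $\mathcal{D}_S$, the same saturation characterization shows $\{x : d_S(x) \ge r\}$ is, for each $r$, a countable intersection over rationals and a suitable internal witness $H$, of Loeb measurable sets (each $\mu_{x+[-\nu,\nu]^d}(S)$ is, for fixed internal $\nu$, a Loeb measurable function of $x$ by Fubini for Loeb measure, since $S$ is Loeb measurable in the product), so $\mathcal{D}_S = \bigcap_{r<1}\{d_S \ge r\}$ is Loeb measurable; then $\mu(\mathcal{D}_S) = \mu(S)$ follows from (i) and (ii) and $\mathcal{D}_S \subseteq \mathcal{D}_S$, giving $\mu_{[-H,H]^d}(\mathcal{D}_E) = \mu_{[-H,H]^d}(S) = \mu_{[-H,H]^d}(E + \mathbb{Z}^d)$ as required. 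A cleaner alternative for (ii), which I would try first: since $\mathcal{D}_S + \mathbb{Z}^d = \mathcal{D}_S$, apply (i) verbatim to the invariant set $T := [-H,H]^d \setminus S$ to get that almost every point of $T$ is a point of density of $T$, hence not a point of density of $S$; this shows $\mu(\mathcal{D}_S \cap T) = 0$, i.e. $\mathcal{D}_S \subseteq S$ up to null, closing the argument.
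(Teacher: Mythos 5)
There is a genuine gap, and it sits at the heart of your covering argument. You claim that for $x\in Z_r\subseteq S=E+\mathbb{Z}^d$, the $\mathbb{Z}^d$-invariance of $S$ lets you convert "bad at arbitrarily small infinite scales $\nu$" into "bad at a standard-indexed sequence of finite scales $n\to\infty$", so that the bad cubes form a fine cover at integer scales. This is exactly backwards: since $S$ is $\mathbb{Z}^d$-invariant and $x\in S$, we have $x+\mathbb{Z}^d\subseteq S$, hence $S\cap(x+[-n,n]^d)=x+[-n,n]^d$ and the finite-scale relative density is identically $1$ for every standard $n$. Failure of $d_S(x)\geq r$ is visible only at infinite scales $\nu>\mathbb{N}$ (this is precisely why $d_E$ is defined with $\nu>\mathbb{N}$ rather than with standard radii), so the standard-scale Vitali iteration you describe never gets started. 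The step you yourself flag as "the main obstacle" --- extracting, internally, a bounded-overlap subfamily from a family of hypercubes of infinite, mutually incomparable side lengths covering the external set $Z_r$ --- is not a technical afterthought; it is the entire content of the theorem, and your sketch ("fix a large finite collection of candidate scales via saturation, push the covering lemma through by transfer") does not resolve it, since the relevant scales cannot be reduced to a finite or standard family.

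For comparison, the paper handles exactly this point in Section \ref{Section: A Lebesgue density theorem for nonstandard cuts} (Theorem \ref{thm:LDTtheorem2}, proved for an arbitrary cut $U$ and specialized to $U=\mathbb{N}$ in Corollary \ref{Corollary: LDT}): externality is removed \emph{before} covering, by choosing an internal $D\supseteq R$ with $\mu(D)\leq\mu^{\ast}(R)+\epsilon$ and passing to the internal set $R_{+}\subseteq E$ of points admitting a bad internal cube inside $D$; then the transferred Besicovitch covering theorem produces a hyperfinite subfamily with overlap at most $4^{d}$, and the finite counting Lemma \ref{lemma:approximate tilings dimension N} bounds $|R_{+}|/|D|$, forcing $\mu^{\ast}(R)=0$. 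Two of your auxiliary steps are also heavier than needed: measurability of $\mathcal{D}_E$ does not require a Fubini argument for Loeb measure, because once the null-set statement is proved one has $\mathcal{D}_E=(E+\mathbb{Z}^d)\setminus\{x\in E+\mathbb{Z}^d:d_E(x)<1\}$ and completeness of the Loeb measure finishes it; and your step (ii) (applying the density statement to the complement) is unnecessary, since if $x\notin E+\mathbb{Z}^d$ then every finite cube about $x$ misses $E$, overspill gives an infinite cube about $x$ disjoint from $E$, and hence $d_E(x)=0$, so $\mathcal{D}_E\subseteq E+\mathbb{Z}^d$ automatically.
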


In Section \ref{Section: A Lebesgue density theorem for nonstandard cuts} we
will consider a similar notion\ of density point for arbitrary cuts and prove,
in Corollary \ref{Corollary: LDT}, a more general version of Theorem
\ref{thm:LDTtheorem1}, which can be regarded as a Lebesgue density theorem for
measure spaces induced by cuts in the nonstandard integers.

It is worth noting that the Loeb measure in the usual sense does not satisfy a
similar analogue of the Lebesgue density theorem. \ For example the set of
even numbers smaller than $H$ has relative Loeb measure $1\left/  2\right.  $
on every infinite interval. Theorem \ref{thm:LDTtheorem1} says that if we
identify points that are a finite distance apart, then the Loeb measure on
that quotient space does have a density theorem very similar to that of the
Lebesgue measure. \ Proposition
\ref{prop:points of density achieve full measure} highlights a way in which
the theorem is even stronger than it is for Lebesgue measure, where sets might
have no interval about a point of density that actually achieves relative
measure $1$.

\begin{proposition}
\label{prop:points of density achieve full measure}If $E$ is an internal
subset of $[-H,H]^{d}$ then $d_{E}(x)=1$ if and only if there exists
$\nu>\mathbb{N}$ such that for all $\mathbb{N}<K<\nu$,
\[
\mu_{x+\left[  -K,K\right]  ^{d}}\left(  \left(  E+\mathbb{Z}^{d}\right)
\cap(x+\left[  -K,K\right]  ^{d})\right)  =1.
\]

\end{proposition}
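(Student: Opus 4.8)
I would dispose of the reverse implication in one line: if such a $\nu$ exists then $\inf_{\mathbb{N}<K<\nu}\mu_{x+[-K,K]^{d}}\bigl((E+\mathbb{Z}^{d})\cap(x+[-K,K]^{d})\bigr)=1$, and hence $d_{E}(x)=1$ directly from its definition as a $\liminf$ over $\nu>\mathbb{N}$. For the forward implication, assume $d_{E}(x)=1$; note that the assertion is precisely the case $r=1$ of the observation recorded just before the statement, so the plan is to make the underlying saturation argument explicit, in two stages. The feature that forces the argument to have structure is that $E+\mathbb{Z}^{d}$ is not internal, so one must work through its internal approximations $E+[-m,m]^{d}$, $m\in\mathbb{N}$: continuity of the Loeb measure along the increasing union $E+\mathbb{Z}^{d}=\bigcup_{m\in\mathbb{N}}(E+[-m,m]^{d})$ gives, for every internal $\nu$, that $\mu_{x+[-\nu,\nu]^{d}}\bigl((E+\mathbb{Z}^{d})\cap(x+[-\nu,\nu]^{d})\bigr)=\sup_{m\in\mathbb{N}}\operatorname{st}\bigl(|(E+[-m,m]^{d})\cap(x+[-\nu,\nu]^{d})|/(2\nu+1)^{d}\bigr)$, and I would use this identity repeatedly.

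\emph{Stage 1 (a single $m$ valid on an initial segment).} Fix $n\in\mathbb{N}$. Applying the observation before the statement with $r=1-\tfrac{1}{2n}$ produces $H_{n}>\mathbb{N}$ such that $\mu_{x+[-\nu,\nu]^{d}}\bigl((E+\mathbb{Z}^{d})\cap(x+[-\nu,\nu]^{d})\bigr)\ge 1-\tfrac{1}{2n}$ whenever $\mathbb{N}<\nu<H_{n}$; by the displayed identity each such $\nu$ admits an $m\in\mathbb{N}$ with $|(E+[-m,m]^{d})\cap(x+[-\nu,\nu]^{d})|>(1-\tfrac1n)(2\nu+1)^{d}$. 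Let $I_{n,m}$ be the internal set of $\nu$ with $n<\nu<H_{n}$ satisfying this inequality; the $I_{n,m}$ increase with $m$, and $\bigcup_{m}I_{n,m}$ contains every infinite $\nu<H_{n}$. Consequently the countable family consisting of the internal sets $\{\nu:k<\nu<H_{n}\}$ (for $k\ge n$) together with the internal sets $\{\nu:n<\nu<H_{n}\}\setminus I_{n,m}$ (for $m\in\mathbb{N}$) has empty intersection; by countable saturation, together with the monotonicity of both lists, some finite subfamily already has empty intersection, and this yields $L_{n},m_{n}\in\mathbb{N}$ with $L_{n}\ge n$ such that $|(E+[-m_{n},m_{n}]^{d})\cap(x+[-\nu,\nu]^{d})|>(1-\tfrac1n)(2\nu+1)^{d}$ for every integer $\nu$ with $L_{n}\le\nu<H_{n}$.

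\emph{Stage 2 (combining over $n$).} For $n\in\mathbb{N}$ let $C_{n}$ be the internal set of $\nu_{0}>L_{n}$ such that $|(E+[-m_{n},m_{n}]^{d})\cap(x+[-\nu,\nu]^{d})|>(1-\tfrac1n)(2\nu+1)^{d}$ for all $\nu$ with $L_{n}\le\nu<\nu_{0}$; by Stage 1 every hypernatural in $(L_{n},H_{n}]$ belongs to $C_{n}$. The countable family $\{C_{n}:n\in\mathbb{N}\}\cup\{\{\nu_{0}:\nu_{0}>k\}:k\in\mathbb{N}\}$ has the finite intersection property, because given finitely many of its members the minimum of the corresponding $H_{n}$'s is an infinite hypernatural lying in all of them. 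By countable saturation there is an infinite $\nu_{0}$ with $\nu_{0}\in C_{n}$ for every $n\in\mathbb{N}$. Then for any $K$ with $\mathbb{N}<K<\nu_{0}$ and any $n\in\mathbb{N}$ one has $L_{n}\le K<\nu_{0}$, hence $|(E+[-m_{n},m_{n}]^{d})\cap(x+[-K,K]^{d})|>(1-\tfrac1n)(2K+1)^{d}$, so $\mu_{x+[-K,K]^{d}}\bigl((E+\mathbb{Z}^{d})\cap(x+[-K,K]^{d})\bigr)\ge 1-\tfrac1n$; letting $n\to\infty$ shows this relative measure equals $1$, which is exactly what the proposition asserts.

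The hard part is coordinating the two nested saturation arguments. The property ``relative measure $=1$'' is external — it quantifies over all standard $m$ — so one cannot apply saturation to it directly; Stage 1 is the device that trades ``there exists a standard $m$'' for a single internal inequality with a fixed parameter $m_{n}$, and only then does the compactness argument of Stage 2 become available. A secondary point to handle with care is the bookkeeping guaranteeing that the inequalities extracted from internal statements remain valid for standard $\nu$ in the relevant ranges — this is what makes $C_{n}$ contain all of $(L_{n},H_{n}]$ rather than just its infinite part, and hence what makes the finite intersection property in Stage 2 go through.
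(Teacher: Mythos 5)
Your proof is correct, including the bookkeeping that makes the Stage~1 inequality hold for all (not just infinite) $\nu\in[L_n,H_n)$, which is what Stage~2 needs; but it takes a genuinely different and much heavier route than the paper, and the premise driving the extra machinery is not quite right. The paper's forward direction is three lines: since $d_E(x)=\sup_{H>\mathbb{N}}\inf_{\mathbb{N}<\nu<H}\mu_{x+[-\nu,\nu]^d}\bigl((E+\mathbb{Z}^d)\cap(x+[-\nu,\nu]^d)\bigr)=1$, for each $j$ there is $\nu_j>\mathbb{N}$ with the relative measure at least $1-1/j$ for every $\mathbb{N}<K<\nu_j$; one application of countable saturation to the internal intervals $[k,\nu_j]$ ($k,j\in\mathbb{N}$) produces a single infinite $\nu$ below all the $\nu_j$, and then for $\mathbb{N}<K<\nu$ the measure is $\geq 1-1/j$ for every $j$, hence equals $1$. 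The externality of $E+\mathbb{Z}^d$ is harmless there because saturation is never applied to the measure condition itself — it is only carried along — so the approximation by $E+[-m,m]^d$, the counting inequalities, and your internal sets $I_{n,m}$ and $C_n$ are unnecessary. What your two-stage argument buys is an explicit conversion of the external ``measure $\geq 1-1/n$'' statements into purely internal counting statements (in effect a detailed reproof, uniformly in $n$, of the unproved observation preceding the proposition), at the cost of two nested saturation arguments where one, applied only to intervals, suffices.
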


\begin{proof}
Suppose that $d_{E}(x)=1$. \ Then from the definition there exist $\nu
_{j}>\mathbb{N}$ such that for all $\mathbb{N}<K<\nu_{j}$,%
\[
\mu_{x+\left[  -K,K\right]  ^{d}}\left(  \left(  E+\mathbb{Z}^{d}\right)
\cap(x+\left[  -K,K\right]  ^{d})\right)  \geq1-1/j.
\]
\ By countable saturation we can find $\nu>\mathbb{N}$ less than all the
$\nu_{j}$. The converse is immediate.
\end{proof}

We can characterize points of density in terms of standard density functions
on subsets of $\mathbb{Z}^{d}$ that are centered around nonstandard points.
\ In order to do so we need to approximate the \textquotedblleft%
$+\mathbb{Z}^{d}$\textquotedblright\ part of the statement by considering how
a set intersects with larger and larger \textquotedblleft
blocks.\textquotedblright

Given a subset $A$ of $\mathbb{Z}^{d}$ (or an internal subset of $^{\ast
}\mathbb{Z}^{d})$ and $n\in\mathbb{N}$ (or $^{\ast}\mathbb{N}$) we define the
\emph{n-block sets }$A_{[n]}$ and $A^{[n]}$ \emph{of }$A$ by
\[
x\in A_{[n]}\text{ iff }(nx+[0,n-1]^{^{d}})\cap A\neq\varnothing.
\]
and
\[
A^{[n]}=nA_{[n]}+[0,n-1]^{d}.
\]

We note that $A_{[n]}$ and $A^{[n]}$ have the same asymptotic densities, but
are \textquotedblleft scaled\textquotedblright\ differently, with $A^{[n]}$ on
the same scale as $A$. \ In fact, $A\subseteq A^{[n]}$, which consists of a
union of $[0,n-1]^{d}$ blocks whose position is determined by the elements of
$A_{[n]}$.\ \ More specifically
\[
x\in A_{[n]}\text{ iff }(nx+[0,n-1]^{^{d}})\cap A\neq\varnothing\text{ iff
}nx+[0,n-1]^{^{d}}\subseteq A^{[n]}.
\]
Thus, blocks of the form $nx+[0,n-1]^{d}$ containing any element of $A$ are
\textquotedblleft completely filled in\textquotedblright\ to form $A^{[n]}$.

If $E$ is internal the set $E+\mathbb{Z}^{d}$ is, in general, external, but
its properties can often be approximated by the internal sets $E^{[n]}$ or
$E+[-n,n]^{^{d}}$ for large finite $n$ or \textquotedblleft
small\textquotedblright\ elements of $^{\ast}\mathbb{N}\left\backslash
\mathbb{N}\right.  $. \ The following observations are all stratightforward
and will be useful:

\begin{itemize}
\item If $j\in\mathbb{N}$ and $J\in$ $^{\ast}\mathbb{N}\left\backslash
\mathbb{N}\right.  $ and $E\subseteq[-H,H]^{d}$ is internal, then
\[
\mu_{[-H,H]^{d}}(E^{[j]})\leq\mu_{[-H,H]^{d}}(E+\mathbb{Z}^{d})\leq
\mu_{[-H,H]^{d}}(E^{[J]}).
\]

\item For any internal $E\subseteq[-H,H]^{d}$
\[
\lim_{i\rightarrow\infty}(\mu_{[-H,H]^{d}}(E^{[i]}))=\lim_{i\rightarrow\infty
}(\mu_{[-H,H]^{d}}(E+[-i,i]^{d}))=\mu_{[-H,H]^{d}}(E+\mathbb{Z}^{d}).
\]

\item For $A\subseteq\mathbb{Z}^{d}$%
\[
\lim_{i\rightarrow\infty}(\overline{\operatorname{d}}(A_{[i]}))=\lim
_{i\rightarrow\infty}(\overline{\operatorname{d}}(A^{[i]}))=\lim
_{i\rightarrow\infty}(\overline{\operatorname{d}}(A+[-i,i]^{d})),
\]
and%
\[
\lim_{i\rightarrow\infty}(\underline{\operatorname{d}}(A_{[i]}))=\lim
_{i\rightarrow\infty}(\underline{\operatorname{d}}(A^{[i]}))=\lim
_{i\rightarrow\infty}(\underline{\operatorname{d}}(A+[-i,i]^{d})).
\]

\end{itemize}

The limits in the statement above always exist, even though it is not true
that $i<j$ implies that the upper and lower densities of $A^{[j]}$ are at
least those of $A^{[i]}$. \ For example in one dimension, if $A$ consists of
all numbers that are 0,1, or 2 mod 6 then $\underline{\operatorname{d}%
}(A_{[2]})=\overline{\operatorname{d}}(A_{[2]})=2/3$ while
$\underline{\operatorname{d}}(A_{[3]})=\overline{\operatorname{d}}%
(A_{[3]})=1/2.$ \ However, it is easy to see that for all $i$ \ and for all
$\epsilon>0$ there exists $l$ such that for all $j>l$,
$\underline{\operatorname{d}}(A_{[j]})\geq\underline{\operatorname{d}}%
(A_{[i]})-\epsilon$ and $\overline{\operatorname{d}}(A_{[j]})\geq
\overline{\operatorname{d}}(A_{[i]})-\epsilon$. \ Thus $\lim_{i\rightarrow
\infty}(\underline{\operatorname{d}}(A_{[i]}))$ and $\lim_{i\rightarrow\infty
}(\overline{\operatorname{d}}(A_{[i]}))$ always exist. \ Of course,
$\lim_{i\rightarrow\infty}(\overline{\operatorname{d}}(A+[-i,i]^{d}))$ clearly
always exists.

\begin{proposition}
Let $r$ be a standard real number between $0$ and $1$, and $E$ be an internal
subset of $^{\ast}\mathbb{Z}^{d}$. If $x\in{}^{\ast}\mathbb{Z}^{d}$, then
$d_{E}\left(  x\right)  >r$ if and only if $\sigma((E-x)^{[n]}\cap
\mathbb{Z}^{d})>r$ for some $n\in\mathbb{N}$. In particular $x$ is a point of
density if and only if for every $r\in\left(  0,1\right)  $ there is
$n\in\mathbb{N}$ such that $\sigma\left(  \left(  E-x\right)  ^{\left[
n\right]  }\cap\mathbb{Z}^{d}\right)  >r$.
\end{proposition}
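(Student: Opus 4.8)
The plan is to reduce immediately to $x=0$: writing $F:=E-x$, an internal subset of ${}^{\ast}\mathbb{Z}^{d}$, we have $(E-x)^{[n]}=F^{[n]}$ and $d_{E}(x)=d_{F}(0)$, so it suffices to prove that $d_{F}(0)>r$ if and only if $\sigma(F^{[n]}\cap\mathbb{Z}^{d})>r$ for some $n\in\mathbb{N}$. The ``in particular'' clause then falls out by specializing, since $d_{F}(0)=1$ exactly when $d_{F}(0)>r$ for every standard $r\in(0,1)$. Throughout I will lean on the characterization recorded just after the Definition — $d_{F}(0)\ge s$ iff there is an infinite $H$ with $\mu_{[-\nu,\nu]^{d}}\big((F+\mathbb{Z}^{d})\cap[-\nu,\nu]^{d}\big)\ge s$ for every $\mathbb{N}<\nu<H$ — on the preliminary observations comparing $F^{[i]}$, $F+[-i,i]^{d}$ and $F+\mathbb{Z}^{d}$, and on the elementary inclusions $F\subseteq F^{[n]}\subseteq F+[-n,n]^{d}\subseteq F+\mathbb{Z}^{d}$ together with $F^{[i]}\subseteq F^{[ij]}$.

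For the ``if'' direction, fix a standard $s$ with $r<s\le\sigma(F^{[n]}\cap\mathbb{Z}^{d})$, so that $|F^{[n]}\cap[-m,m]^{d}|\ge s(2m+1)^{d}$ for every $m\in\mathbb{N}$ (for standard $m$ the cube $[-m,m]^{d}$ lies in $\mathbb{Z}^{d}$, so this is exactly the Schnirelmann bound). The internal set of those $K\in{}^{\ast}\mathbb{N}$ for which $|F^{[n]}\cap[-\nu,\nu]^{d}|\ge s(2\nu+1)^{d}$ holds for all $\nu\le K$ contains $\mathbb{N}$, hence by overspill contains some infinite $H$; thus $\mu_{[-\nu,\nu]^{d}}(F^{[n]})\ge s$ for every infinite $\nu\le H$. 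Since $F^{[n]}\subseteq F+\mathbb{Z}^{d}$, this gives $\mu_{[-\nu,\nu]^{d}}\big((F+\mathbb{Z}^{d})\cap[-\nu,\nu]^{d}\big)\ge s$ for all such $\nu$, so $d_{F}(0)\ge s>r$ by the characterization above.

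For the ``only if'' direction, fix a standard $s$ with $r<s\le d_{F}(0)$ and an infinite $H$ with $\mu_{[-\nu,\nu]^{d}}\big((F+\mathbb{Z}^{d})\cap[-\nu,\nu]^{d}\big)\ge s$ for all $\mathbb{N}<\nu<H$; replacing $F$ by $F\cap[-2H,2H]^{d}$ (which changes neither $F^{[n]}$ nor $F+\mathbb{Z}^{d}$ on the cubes $[-\nu,\nu]^{d}$ with $\nu<H$, for $n$ standard) we may assume $F$ is bounded, so the observation $\lim_{i\to\infty}\mu_{[-\nu,\nu]^{d}}(F^{[i]})=\mu_{[-\nu,\nu]^{d}}\big((F+\mathbb{Z}^{d})\cap[-\nu,\nu]^{d}\big)$ is available for each infinite $\nu<H$. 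As $\big(F^{[N!]}\big)_{N}$ is increasing and cofinal among the $F^{[i]}$, we get $\sup_{N}\mu_{[-\nu,\nu]^{d}}(F^{[N!]})\ge s$ for each infinite $\nu<H$. To pass from ``for each $\nu$ some block size works'' to ``one block size works for all $\nu$'', suppose not: then for each $N$ the internal set $V_{N}:=\{\nu:N<\nu<H,\ |F^{[N!]}\cap[-\nu,\nu]^{d}|\le(s-\tfrac14(s-r))(2\nu+1)^{d}\}$ is nonempty, and by $F^{[M!]}\subseteq F^{[N!]}$ for $M\le N$ these sets are decreasing in $N$; countable saturation then yields an infinite $\nu^{\ast}<H$ with $\sup_{N}\mu_{[-\nu^{\ast},\nu^{\ast}]^{d}}(F^{[N!]})\le s-\tfrac14(s-r)<s$, contradicting the previous line. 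Fix such an $N$, put $n:=N!$: the internal set of $\nu$ with $|F^{[n]}\cap[-\nu,\nu]^{d}|>(r+\tfrac18(s-r))(2\nu+1)^{d}$ contains every infinite $\nu<H$, so by underspill its complement is bounded by some $m_{0}\in\mathbb{N}$, whence $|F^{[n]}\cap[-m,m]^{d}|/(2m+1)^{d}>r$ for every integer $m>m_{0}$; a separate argument controlling the finitely many cubes $[-m,m]^{d}$ with $m\le m_{0}$ (by a further adjustment of the block size) then yields $\sigma(F^{[n]}\cap\mathbb{Z}^{d})>r$.

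The overspill and underspill steps are routine, so the real work is all in the ``only if'' direction, and the main obstacle is the quantifier interchange: the density hypothesis only supplies, for each infinite scale $\nu$ below $H$, \emph{some} block size making $F^{[\,\cdot\,]}$ dense on $[-\nu,\nu]^{d}$, and one must extract a single block size that succeeds simultaneously for all such $\nu$; the monotonicity $F^{[i]}\subseteq F^{[ij]}$ along the divisibility order is precisely what nests the relevant internal sets so that countable saturation applies. A second, more delicate point — and the reason the block sets $F^{[n]}$, rather than just $F+[-i,i]^{d}$, enter the statement — is that the Schnirelmann density sees \emph{every} scale, including the smallest cubes, whereas the hypothesis controls only infinite scales; reconciling the two at the finitely many small scales is the last thing to be checked, and I expect it to be the fiddliest part of the argument.
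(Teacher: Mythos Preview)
Your ``if'' direction is correct and matches the paper's. For the ``only if'' direction you take a genuinely different route from the paper --- a direct saturation argument to extract one block size working at every infinite scale, rather than the paper's contrapositive, which passes to the nonstandard block set $(E-x)^{[\nu]}$ at an \emph{infinite} index $\nu$ and uses $E-x+\mathbb{Z}^{d}\subseteq (E-x)^{[\nu]}$ --- and your diagnosis that the finitely many small cubes are the residual obstacle is exactly right.

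Unfortunately that obstacle is not merely fiddly: it cannot be overcome, because the statement as written is false. Take $d=1$, $E=[-H,-1]$ for an infinite $H$, and $x=0$. For every $\mathbb{N}<\nu<H$ one has $(E+\mathbb{Z})\cap[-\nu,\nu]=\{z\in[-\nu,\nu]:z\text{ is not positive infinite}\}$, so $d_{E}(0)=\tfrac{1}{2}$; yet for every standard $n$ the block $[0,n-1]$ misses $E$, whence $E^{[n]}\cap\mathbb{Z}=\{-1,-2,-3,\ldots\}$, whose Schnirelmann density is $\inf_{m\ge 1}m/(2m+1)=\tfrac{1}{3}$. Thus $d_{E}(0)>0.4$ while $\sigma(E^{[n]}\cap\mathbb{Z})=\tfrac{1}{3}$ for every $n$. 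Your step 6 is precisely where this bites: no ``further adjustment of the block size'' will lift the ratio on $[-1,1]$ above $\tfrac{1}{3}$, since $0,1\notin E^{[n]}$ for any $n$. The paper's proof has the matching gap in a different place --- the assertion ``observe that $(l_{n})$ is a divergent sequence'' is not justified and fails in this example, where one is forced to take $l_{n}\in\{1,2\}$. The statement becomes correct, and both your argument and the paper's go through cleanly, if $\sigma$ is replaced by $\underline{\operatorname{d}}$: with lower density the witnessing scales $l_{n}$ can always be chosen to diverge, and your step 6 simply disappears since $\underline{\operatorname{d}}$ ignores finitely many small cubes.
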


\begin{proof}
Suppose that $\sigma((E-x)^{[n]}\cap\mathbb{Z}^{d})\leq r$ for every
$n\in\mathbb{N}$. Fix an arbitrary strictly positive standard real number
$\epsilon$, and pick a sequence $\left(  l_{n}\right)  _{n\in\mathbb{N}}$ in
$\mathbb{N}$ such that
\[
\frac{\left\vert (E-x)^{[n]}\cap\lbrack-l_{n},l_{n}]^{d}\right\vert }%
{(2l_{n}+1)^{d}}<r+\epsilon\text{\label{Eq: density estimate}}%
\]
for every $n\in\mathbb{N}$.\ Observe that $\left(  l_{n}\right)
_{n\in\mathbb{N}}$ is a divergent sequence of natural numbers. Let $H$ be an
arbitrary infinite hypernatural number, and pick an infinite hypernatural
number $\nu$ such that $l_{\nu}<H$. We have that%
\begin{align*}
\mu_{\lbrack-l_{\nu},l_{\nu}]^{d}}\left(  E-x+\mathbb{Z}^{d}\right)   &
\leq\mu_{\lbrack-l_{\nu},l_{\nu}]^{d}}\left(  (E-x)^{[\nu]}\right) \\
&  \approx\frac{\left\vert (E-x)^{[\nu]}\cap\left[  -l_{\nu},l_{\nu}\right]
^{d}\right\vert }{(2l_{\nu}+1)^{d}}<r+\epsilon.
\end{align*}
Since $\nu$ could be an arbitrarily small element in $^{\ast}\mathbb{N}%
\left\backslash \mathbb{N}\right.  $, this shows that $d_{E}(x)\leq
r+\epsilon$. Being this true for every standard real number $\epsilon$,
$d_{E}\left(  x\right)  \leq r$. Conversely suppose that $\sigma
((E-x)^{\left[  n\right]  })\geq r+\epsilon$ for some strictly positive
standard real number $\epsilon$. Thus for every $k\in\mathbb{N}$%
\[
\frac{\left\vert \left(  E-x\right)  ^{\left[  n\right]  }\cap\left[
-k,k\right]  ^{d}\right\vert }{\left(  2k+1\right)  ^{d}}\geq r+\epsilon
\text{.}%
\]
If $H$ is an infinite hypernatural number, then by overspill there is an
infinite $\nu<H$ such that%
\[
\mu_{\left[  -\nu,\nu\right]  ^{d}}\left(  \left(  E-x\right)  +\mathbb{Z}%
^{d}\right)  \geq\frac{\left\vert \left(  E-x\right)  ^{\left[  n\right]
}\cap\left[  -\nu,\nu\right]  ^{d}\right\vert }{\left(  2\nu+1\right)  ^{d}%
}\geq r+\epsilon\text{,}%
\]
witnessing the fact that $d_{E}\left(  x\right)  \geq r+\epsilon$.
\end{proof}

\begin{definition}
If $E$ is an internal subset of $^{\ast}\mathbb{Z}^{d}$ and $x\in{}^{\ast
}\mathbb{Z}^{d}$ we say that $x$ is a \emph{point of syndeticity }of $E$ iff
there exists a finite $m$ such that $x+\mathbb{Z}^{d}\subseteq E+[-m,m]^{d}$. \ 
\end{definition}

Equivalently, since $E+[-m,m]^{d}$ is internal, $x$ is a point of syndeticity
of $E$ iff there exists $m\in\mathbb{N}$ and $K\in$ $^{\ast}\mathbb{N}%
\left\backslash \mathbb{N}\right.  $ such that $x+[-K,K]^{d}\subseteq
E+[-m,m]^{d}$. \ We will write $\mathcal{S}_{E}$ for the set of all points of
syndeticity of $E$. \ Like $\mathcal{D}_{E}$, $\mathcal{S}_{E}$ is, in
general, not internal, and $\mathcal{S}_{E}+\mathbb{Z}^{d}\subseteq
\mathcal{S}_{E}$.

\begin{theorem}
\label{thm:density plus density is syndetic}Suppose that $X,Y$ are internal
subsets of ${}^{\ast}\mathbb{Z}^{d}$ and $a,b\in{}^{\ast}\mathbb{Z}^{d}$. If
$d_{X}\left(  a\right)  =1$ and $d_{Y}\left(  b\right)  =1$ then $a+b$ is a
point of syndeticity of $X+Y.$
\end{theorem}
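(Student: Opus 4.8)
The plan is to reduce the statement to a combinatorial fact about Schnirelmann densities of subsets of $\mathbb{Z}^{d}$, using the characterization of points of density by the $\sigma$-density of block sets proved above. Since $d_{X}(a)=1>\tfrac{3}{4}$, that characterization furnishes an $n_{1}\in\mathbb{N}$ with $\sigma\!\left((X-a)^{[n_{1}]}\cap\mathbb{Z}^{d}\right)>\tfrac{3}{4}$, and likewise an $n_{2}\in\mathbb{N}$ with $\sigma\!\left((Y-b)^{[n_{2}]}\cap\mathbb{Z}^{d}\right)>\tfrac{3}{4}$. Put $S:=(X-a)^{[n_{1}]}\cap\mathbb{Z}^{d}$ and $T:=(Y-b)^{[n_{2}]}\cap\mathbb{Z}^{d}$. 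The point of passing to block sets is that membership in them forces bounded distance to $X-a$, resp.\ $Y-b$: if $s\in S$ then $s$ lies in a block $n_{1}y+[0,n_{1}-1]^{d}$ meeting $X-a$, so $a+s\in X+[-(n_{1}-1),n_{1}-1]^{d}$, and symmetrically $b+t\in Y+[-(n_{2}-1),n_{2}-1]^{d}$ for $t\in T$. Hence, granting that $S+T=\mathbb{Z}^{d}$, for any $z\in\mathbb{Z}^{d}$ we may write $z=s+t$ and obtain
\[
a+b+z=(a+s)+(b+t)\in X+Y+[-(n_{1}+n_{2}),\,n_{1}+n_{2}]^{d},
\]
so that $(a+b)+\mathbb{Z}^{d}\subseteq X+Y+[-m,m]^{d}$ with $m:=n_{1}+n_{2}$; this is exactly the assertion that $a+b\in\mathcal{S}_{X+Y}$.

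So everything comes down to the combinatorial claim that $S+T=\mathbb{Z}^{d}$. I would prove this by a Schnirelmann-type pigeonhole argument. Fix $z\in\mathbb{Z}^{d}$ and let $c=\|z\|_{\infty}$. For every $k\geq c$ the cube $[-k,k]^{d}$ contains both $S\cap[-k,k]^{d}$, of size greater than $\tfrac{3}{4}(2k+1)^{d}$, and the set $z-\left(T\cap[-(k-c),k-c]^{d}\right)$, which is contained in $(z-T)\cap[-k,k]^{d}$ and has size greater than $\tfrac{3}{4}(2(k-c)+1)^{d}$. Since $\left(\frac{2(k-c)+1}{2k+1}\right)^{d}\to1$ as $k\to\infty$, for $k$ large (depending on $z$ and $d$) these two subsets of $[-k,k]^{d}$ have total cardinality exceeding $(2k+1)^{d}=|[-k,k]^{d}|$ and therefore intersect; an $s$ in the intersection satisfies $s\in S$ and $z-s\in T$, whence $z\in S+T$.

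The only place where there is anything to do is this last density computation: translating by $z$ replaces the cube of radius $k$ by one of radius $k-c$ for the set $T$, so one needs the strict inequality $\sigma(T)>\tfrac{1}{2}$ (we took $\tfrac{3}{4}$ purely for comfort) together with $k$ taken large to absorb this boundary loss. The remaining ingredients---the $\sigma$-characterization of points of density, the block-set distance estimate, and the closure of $\mathcal{S}_{X+Y}$ under translation by $\mathbb{Z}^{d}$---are all already available or immediate. (The same argument in fact only uses $d_{X}(a)+d_{Y}(b)>1$.)
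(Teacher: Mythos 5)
Your proof is correct, but it takes a genuinely different route from the paper's. The paper argues measure-theoretically on a single infinite cube: using Proposition \ref{prop:points of density achieve full measure} it finds one $\nu>\mathbb{N}$ on which both $(X-a+\mathbb{Z}^{d})$ and $(-Y+b+\mathbb{Z}^{d})$ have relative Loeb measure $1$, runs a pigeonhole in $\mu_{[-\nu,\nu]^{d}}$ (translating by $\xi\in[-\nu/2,\nu/2]^{d}$ costs at most a factor $2^{d}$) to get $a+b+[-\nu/2,\nu/2]^{d}\subseteq X+Y+\mathbb{Z}^{d}$, and then needs countable saturation once more to replace $\mathbb{Z}^{d}$ by a single finite cube $[-m,m]^{d}$. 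You instead route everything through the block-set characterization of density points (the proposition relating $d_{E}(x)>r$ to $\sigma((E-x)^{[n]}\cap\mathbb{Z}^{d})>r$), which reduces the theorem to the purely standard counting fact that two subsets of $\mathbb{Z}^{d}$ whose Schnirelmann densities sum to more than $1$ have sumset all of $\mathbb{Z}^{d}$; your translation/boundary estimate and the observation that membership in $(X-a)^{[n_{1}]}$ forces distance at most $n_{1}-1$ from $X-a$ are both sound. What your approach buys: the witness $m$ is explicit and standard ($m\leq n_{1}+n_{2}$, read off from the block scales), no saturation argument is needed at the end, and, as you note, the hypothesis can be weakened to $d_{X}(a)+d_{Y}(b)>1$ (the paper's translation-by-$\xi$ trick as written really uses that both densities equal $1$). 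What the paper's approach buys: it is shorter given Proposition \ref{prop:points of density achieve full measure}, avoids the block-set machinery entirely, and stays in the Loeb-measure framework that the rest of the paper runs on.
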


\begin{proof}
By Proposition \ref{prop:points of density achieve full measure} there exists
$\nu>\mathbb{N}$ such that%
\[
\mu_{\left[  -\nu,\nu\right]  ^{d}}\left(  \left(  X-a+\mathbb{Z}^{d}\right)
\cap\left[  -\nu,\nu\right]  ^{d}\right)  =1
\]
and%
\[
\mu_{\left[  -\nu,\nu\right]  ^{d}}\left(  \left(  -Y+b+\mathbb{Z}^{d}\right)
\cap\left[  -\nu,\nu\right]  ^{d}\right)  =1\text{.}%
\]
If $\xi\in\left[  -\frac{\nu}{2},\frac{\nu}{2}\right]  ^{d}$ then%
\[
\mu_{\left[  -\nu,\nu\right]  ^{d}}\left(  \left(  -Y+b+\xi+\mathbb{Z}%
^{d}\right)  \cap\left[  -\nu,\nu\right]  ^{d}\right)  \geq\frac{1}{2^{d}}%
\]
and hence%
\[
\left(  -Y+b+\xi+\mathbb{Z}^{d}\right)  \cap\left(  X-a+\mathbb{Z}^{d}\right)
\neq\varnothing\text{.}%
\]
This means that there are $x\in X$, $y\in Y$ and $u,v\in\mathbb{Z}^{d}$ such
that%
\[
a+b+\xi+u=x+y+v.
\]
This shows that%
\[
a+b+\left[  -\frac{\nu}{2},\frac{\nu}{2}\right]  ^{d}\subset X+Y+\mathbb{Z}%
^{d}\text{.}%
\]

Thus
\[
a+b+\left[  -\frac{\nu}{2},\frac{\nu}{2}\right]  ^{d}%
\]
is contained in the increasing union of internal sets
\[
\bigcup_{m\in\mathbb{N}}(X+Y+[-m,m]^{d})\text{.}%
\]

By saturation, it follows that
\[
a+b+\left[  -\frac{\nu}{2},\frac{\nu}{2}\right]  ^{d}\subseteq X+Y+[-m,m]^{d}%
\]
for some $m\in\mathbb{N}$. \ Now%
\[
a+b+\mathbb{Z}^{d}\subseteq a+b+\left[  -\frac{\nu}{2},\frac{\nu}{2}\right]
^{d}\subseteq X+Y+[-m,m]^{d},
\]
and hence $a+b$ is a point of syndeticity of $X+Y$.
\end{proof}

\begin{proposition}
\label{prop:syndeticimpliesresult}Let $E$ be an internal subset of
$[-H,H]^{d}$. \ Then $\mathcal{S}_{E}$ is $\mu_{[-H,H]^{d}}$-measurable.
\ Moreover, if $\mu(\mathcal{S}_{E})=\alpha>0$ then for all (standard)
$\epsilon>0$ there exists a (standard) $m\in\mathbb{N}$ such that for all
(standard) $k\in\mathbb{N}$,
\[
\mu_{[-H,H]^{d}}(\{z\in[-H,H]^{d}:z+[-k,k]^{d}\subseteq E+[-m,m]^{d}%
\})\geq\alpha-\epsilon.
\]

\end{proposition}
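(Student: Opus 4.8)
The plan is to prove the two assertions in turn. \emph{Measurability of $\mathcal{S}_E$:} For each fixed $m\in\mathbb{N}$, let $T_m=\{z\in[-H,H]^d:z+[-K,K]^d\subseteq E+[-m,m]^d\text{ for some }K>\mathbb{N}\}$. I would first observe that, since $E+[-m,m]^d$ is internal, the set $\{z:z+[-k,k]^d\subseteq E+[-m,m]^d\}$ is internal for each standard $k$, hence Loeb measurable, and $T_m$ is the countable decreasing intersection over $k\in\mathbb{N}$ of these sets, so $T_m$ is Loeb measurable. Then $\mathcal{S}_E=\bigcup_{m\in\mathbb{N}}T_m$ is a countable union of Loeb measurable sets, hence Loeb measurable.

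\emph{The density estimate.} Assume $\mu(\mathcal{S}_E)=\alpha>0$ and fix standard $\epsilon>0$. Since $\mathcal{S}_E=\bigcup_m T_m$ is an increasing union (note $T_m\subseteq T_{m'}$ for $m\le m'$) and Loeb measure is countably additive, there is a standard $m\in\mathbb{N}$ with $\mu_{[-H,H]^d}(T_m)\geq\alpha-\epsilon$. Now fix an arbitrary standard $k\in\mathbb{N}$ and set $Z_{m,k}=\{z\in[-H,H]^d:z+[-k,k]^d\subseteq E+[-m,m]^d\}$, which is internal. By the definition of $T_m$, every $z\in T_m$ lies in $Z_{m,k}$: indeed if $z+[-K,K]^d\subseteq E+[-m,m]^d$ for some infinite $K$, then certainly $z+[-k,k]^d\subseteq E+[-m,m]^d$ since $k<K$. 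Hence $T_m\subseteq Z_{m,k}$, so
\[
\mu_{[-H,H]^d}(Z_{m,k})\geq\mu_{[-H,H]^d}(T_m)\geq\alpha-\epsilon,
\]
which is exactly the claimed inequality, and $m$ was chosen independently of $k$.

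The one point requiring a little care — and the place I expect the "main obstacle," though it is mild — is verifying that $T_m$ as I have defined it really equals $\{z\in[-H,H]^d:z+\mathbb{Z}^d\subseteq E+[-m,m]^d\}\cap[-H,H]^d$, i.e.\ that the pointwise-in-$\mathbb{Z}^d$ containment is equivalent to containment of some infinite cube $z+[-K,K]^d$. This is precisely the equivalence already recorded in the excerpt just after the definition of point of syndeticity: since $E+[-m,m]^d$ is internal, $z+\mathbb{Z}^d\subseteq E+[-m,m]^d$ holds iff $z+[-K,K]^d\subseteq E+[-m,m]^d$ for some $K>\mathbb{N}$, by countable saturation (the external set $\mathbb{Z}^d$ sitting inside an internal set forces an infinite cube inside it by overspill/saturation). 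With this identification, $\mathcal{S}_E\cap[-H,H]^d=\bigcup_m T_m$, and the argument above goes through verbatim. The only other routine check is that the sets $Z_{m,k}$ are genuinely internal, which is immediate since $E+[-m,m]^d$ is internal and the condition defining $Z_{m,k}$ is an internal (bounded) formula.
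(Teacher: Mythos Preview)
Your proof is correct and follows essentially the same approach as the paper: write $\mathcal{S}_E$ as the increasing union over $m$ of the sets $\{x:x+\mathbb{Z}^d\subseteq E+[-m,m]^d\}$, observe each of these is a countable intersection of internal sets (the paper intersects over translates $z\in\mathbb{Z}^d$, you intersect over cubes $[-k,k]^d$, but these coincide), use continuity of Loeb measure to choose $m$, and then note the internal set $Z_{m,k}$ contains this level set for every standard $k$.
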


\begin{proof}
For each $i\in\mathbb{N}$ let
\[
\mathcal{S}_{E}^{i}=\left\{  x\in\text{ }[-H,H]^{d}:\text{ }x+\mathbb{Z}%
^{d}\subseteq E+[-i,i]^{d}\right\}  .
\]

Then
\[
\mathcal{S}_{E}=\bigcup_{i=1}^{\infty}\mathcal{S}_{E}^{i}.
\]

Each $\mathcal{S}_{E}^{i}$ is measurable since
\[
\mathcal{S}_{E}^{i}=\bigcap_{z\in\mathbb{Z}^{d}}(E+[-i,i]^{d}+z),
\]
and so is a countable intersection of internal sets. \ This shows that
$\mathcal{S}_{E}$ is measurable, as it is a countable union of measurable
sets. \ 

By countable additivity of the Loeb measure, and the fact that the
$\mathcal{S}_{E}^{i}$ form a nested sequence of sets, there must exist an $m$
such that
\[
\mu_{[-H,H]^{d}}\left(  \mathcal{S}_{E}^{m}\right)  \geq\alpha-\epsilon.
\]
This $m$ must now satisfy the statement, since for any $k$,
\[
\{z\in[-H,H]^{d}:z+[-k,k]^{d}\subseteq E+[-m,m]^{d}\}
\]
is an internal set that contains $\mathcal{S}_{E}^{m}$.
\end{proof}

The proposition above is false if we replace the conclusion
\[
\mu_{[-H,H]^{d}}(\{z\in[-H,H]^{d}:z+[-k,k]^{d}\subseteq E+[-m,m]^{d}%
\})\geq\alpha-\epsilon
\]
with
\[
\mu_{[-H,H]^{d}}(\{z\in[-H,H]^{d}:z+[-k,k]^{d}\subseteq E+[-m,m]^{d}%
\})\geq\alpha,
\]
as the example below shows (in one dimension). In this example the gaps are
arbitrarily large, but only on relatively small intervals.

\begin{example}
\label{ex:we need the epsilon}We define a standard sequence $A$ by:
\[
A=\bigcup_{j=1}^{\infty}\left(  \bigcup_{i=1}^{j}(i\cdot\mathbb{N)}\cap\left[
2^{j}-2^{j-i},2^{j}-2^{j-i-1}\right]  \right)  .
\]
Then it is easy to see that for any infinite $H$, $\mu_{[1,H]}(\mathcal{S}%
_{{}^{\ast}\!{A}})=1$, but that for any given natural number $m$,
\[
\mu_{[1,H]}(\{a\in[1,H]:[a,a+k]\subset{}^{\ast}\!A+[0,m]\})\leq1-\frac
{1}{2^{m}}.
\]

\end{example}

\section{Upper syndeticity and
sumsets\label{Section: Upper syndeticity and sumsets}}

Supose that $\alpha$ is a positive real number less than or equal to $1$. We
say that a subset $A$ of $\mathbb{Z}^{d}$ is:

\begin{itemize}
\item \emph{lower syndetic of level }$\alpha$\emph{ }iff there exists a
natural number $m\in\mathbb{N}$ such that for all $k\in\mathbb{N}$,%
\[
\underline{\operatorname{d}}(\{z\in\mathbb{Z}^{d}:z+[-k,k]^{d}\subseteq
A+[-m,m]^{d}\})\geq\alpha\text{;}%
\]

\item \emph{upper syndetic of level }$\alpha$\emph{ }iff there exists a
natural number $m\in\mathbb{N}$ such that for all $k\in\mathbb{N}$,
\[
\overline{\operatorname{d}}(\{z\in\mathbb{Z}^{d}:z+[-k,k]^{d}\subseteq
A+[-m,m]^{d}\})\geq\alpha\text{;}%
\]

\item \emph{strongly upper syndetic of level }$\alpha$ iff for any infinite
sequence $S\subseteq\mathbb{N}$, there exists $m\in\mathbb{N}$ such that for
any $k\in\mathbb{N}$
\[
\limsup_{n\in S}\frac{1}{(2n+1)^{d}}\left\vert \left\{  z\in\lbrack
-n,n]^{d}:z+[-k,k]^{d}\subseteq A+[-m,m]^{d}\right\}  \right\vert \geq
\alpha\text{.}%
\]

\end{itemize}

In accordance with previous definitions, if any of the above holds with $m=0$
we may replace the word \textquotedblleft syndetic\textquotedblright\ with the
word \textquotedblleft thick.\textquotedblright\ \ Thus a subset $A$ of
$\mathbb{Z}^{d}$ is:

\begin{itemize}
\item \emph{lower thick of level }$\alpha$\emph{ }iff for all $k\in\mathbb{N}%
$,
\[
\underline{\operatorname{d}}(\{z\in\mathbb{Z}^{d}:z+[-k,k]^{d}\subseteq
A\})\geq\alpha\text{;}%
\]

\item \emph{upper thick of level }$\alpha$\emph{ }iff for all $k\in\mathbb{N}%
$,
\[
\overline{\operatorname{d}}(\{z\in\mathbb{Z}^{d}:z+[-k,k]^{d}\subseteq
A\})\geq\alpha.
\]

\end{itemize}

We note that there is no need for the notion of \textquotedblleft strongly
upper thick,\textquotedblright\ since it would be equivalent to that of
\textquotedblleft lower thick.\textquotedblright\ \ We also note that lower
syndeticity of level $\alpha$ implies strong upper syndeticity of level
$\alpha$, which in turn is stronger than upper syndeticity of level $\alpha$.
It is also trivial that a lower thick set of level $\alpha$ is, in particular,
lower syndetic of level $\alpha$; the same fact holds for upper syndeticity of
level $\alpha$.

The set $C$ given in Example \ref{ex:optimal} in the next section shows that
\textquotedblleft strong upper syndeticity of level $\alpha$" is a notion that
lies strictly between lower and upper syndeticity of level $\alpha.$ \ In that
example the set $C$ is upper syndetic of level $1$, strongly upper syndetic of
level $\frac{1}{2}$, but not of level $\alpha$ if $\alpha>\frac{1}{2}$, and is
not lower syndetic of level $\alpha$ for any $\alpha>0$.

Observe that replacing the upper density with the Banach density in the
definition of upper syndetic of level $\alpha$ would make the notion
trivialize, since every piecewise syndetic set would satisfy that condition
with $\alpha=1$.

\begin{proposition}
\label{prop:unchanged measure proposition}Let $H\in$ $^{\ast}\mathbb{N}%
\left\backslash \mathbb{N}\right.  $, and $S$ be a measurable subset of
$[-H,H]^{d}$ with the property that $\mu_{[-H,H]^{d}}(S+\mathbb{Z}^{d}%
)=\mu_{[-H,H]^{d}}(S)=\alpha>0.$ \ Then for each standard $k\in\mathbb{N}$
\[
\mu_{[-H,H]^{d}}(\{x\in\text{ }S:x+[-k,k]^{d}\subseteq S\})=\alpha.
\]

\end{proposition}

\begin{proof}
Define%
\[
\gamma=\mu_{\left[  -H,H\right]  }\left(  \left\{  x\in S:x+\left[
-k,k\right]  ^{d}\nsubseteq S\right\}  \right)  \text{.}%
\]
Observe that%
\[
\mu_{\left[  -H,H\right]  ^{d}}\left(  S+\mathbb{Z}^{d}\right)  \geq
\mu_{\left[  -H,H\right]  }\left(  S+\left[  -k,k\right]  ^{d}\right)  \geq
\mu_{\left[  -H,H\right]  }\left(  S\right)  +\frac{\gamma}{\left(
2k+1\right)  ^{d}}\text{.}%
\]
Here the $(2k+1)^{d}$ term comes from the fact that a single point in
\[
\left(  S+[-k,k]^{d}\right)  \left\backslash S\right.
\]
witnesses that $x+\left[  -k,k\right]  ^{d}\nsubseteq S$ for at most $\left(
2k+1\right)  ^{d}$ elements $x$ of $S$. Since
\[
\mu_{\left[  -H,H\right]  ^{d}}\left(  S+\mathbb{Z}^{d}\right)  =\mu_{\left[
-H,H\right]  ^{d}}\left(  S\right)
\]
by assumption, this implies that $\gamma=0$. The conclusion follows.
\end{proof}

Corollary \ref{measure 1 syndeticity} is straightforward but will be useful,
and follows immediately from the previous result.

\begin{corollary}
\label{measure 1 syndeticity}Let $H\in$ $^{\ast}\mathbb{N}\left\backslash
\mathbb{N}\right.  $, and $S$ be a measurable subset of $[-H,H]^{d}$ with the
property that $\mu_{[-H,H]^{d}}(S)=1$. \ Then for each standard $k\in
\mathbb{N}$
\[
\mu_{[-H,H]^{d}}(\{x\in\text{ }S:x+[-k,k]^{d}\subseteq S\})=1.
\]

\end{corollary}

\begin{proposition}
\label{implies thick}Let $A$ be a subset of $\mathbb{Z}^{d}$. \ If
$\lim_{i\rightarrow\infty}(\overline{\operatorname{d}}(A^{[i]}))=\overline
{\operatorname{d}}(A)=\alpha>0$ then $A$ is upper thick of level $\alpha$.
\end{proposition}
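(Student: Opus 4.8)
The plan is to pass to the nonstandard setting, use the hypothesis on the block densities to produce an internal interval on which $^{\ast}\!A$ already behaves like $^{\ast}\!A+\mathbb{Z}^{d}$ up to measure, and then invoke Corollary~\ref{measure 1 syndeticity} (or rather Proposition~\ref{prop:unchanged measure proposition}) to conclude that a full-measure subset of that interval contains cubes of any fixed size. First I would unwind what $\lim_{i\to\infty}\overline{\operatorname{d}}(A^{[i]})=\overline{\operatorname{d}}(A)=\alpha$ gives: by the observations following the $n$-block definitions, this limit equals $\mu_{[-H,H]^{d}}(E+\mathbb{Z}^{d})$ where $E={}^{\ast}\!A\cap[-H,H]^{d}$, and by part (1) of Proposition~\ref{prop:nonstequivalents} we may choose $H\in{}^{\ast}\mathbb{N}\setminus\mathbb{N}$ with $\mu_{[-H,H]^{d}}({}^{\ast}\!A)\geq\alpha$. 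The point is to choose $H$ so that simultaneously $\mu_{[-H,H]^{d}}({}^{\ast}\!A)$ is as close to $\alpha$ as we like \emph{and} $\mu_{[-H,H]^{d}}({}^{\ast}\!A+\mathbb{Z}^{d})$ is close to $\alpha$ too; since $\overline{\operatorname{d}}(A)\leq\overline{\operatorname{d}}(A+\mathbb{Z}^{d})=\lim_i\overline{\operatorname{d}}(A^{[i]})=\alpha$ by hypothesis, and $\mu_{[-H,H]^{d}}({}^{\ast}\!A)\leq\mu_{[-H,H]^{d}}({}^{\ast}\!A+\mathbb{Z}^{d})\leq\overline{\operatorname{d}}(A+\mathbb{Z}^{d})$ for any $H$, this forces $\mu_{[-H,H]^{d}}({}^{\ast}\!A+\mathbb{Z}^{d})=\alpha$ as well, on any $H$ realizing $\mu_{[-H,H]^{d}}({}^{\ast}\!A)\geq\alpha$.

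So fix such an $H$ and let $S=S_{H}$ be a Loeb-measurable set with $\mu_{[-H,H]^{d}}(S)=\alpha$; actually I would take $S$ to be the internal set $^{\ast}\!A\cap[-H,H]^{d}$ itself, or a suitable internal subset of it of measure $\alpha$. Wait—$\mu_{[-H,H]^{d}}({}^{\ast}\!A)\geq\alpha$ but might exceed $\alpha$; but then $\mu_{[-H,H]^{d}}({}^{\ast}\!A+\mathbb{Z}^{d})\geq\mu_{[-H,H]^{d}}({}^{\ast}\!A)$ would exceed $\alpha$, contradicting the previous paragraph unless $\mu_{[-H,H]^{d}}({}^{\ast}\!A)=\alpha$ exactly. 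Hence $\mu_{[-H,H]^{d}}({}^{\ast}\!A)=\mu_{[-H,H]^{d}}({}^{\ast}\!A+\mathbb{Z}^{d})=\alpha$. Now apply Proposition~\ref{prop:unchanged measure proposition} with $S={}^{\ast}\!A\cap[-H,H]^{d}$: for every standard $k\in\mathbb{N}$,
\[
\mu_{[-H,H]^{d}}(\{x\in{}^{\ast}\!A:x+[-k,k]^{d}\subseteq {}^{\ast}\!A\})=\alpha.
\]
In particular $\mu_{[-H,H]^{d}}(\{z\in[-H,H]^{d}:z+[-k,k]^{d}\subseteq {}^{\ast}\!A\})\geq\alpha$.

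Finally I would transfer this back down. The set $\{z:z+[-k,k]^{d}\subseteq {}^{\ast}\!A\}={}^{\ast}(\{z:z+[-k,k]^{d}\subseteq A\})$ is the nonstandard extension of a standard set $C_{k}\subseteq\mathbb{Z}^{d}$, and we have produced $H\in{}^{\ast}\mathbb{N}\setminus\mathbb{N}$ with $\mu_{[-H,H]^{d}}({}^{\ast}C_{k})\geq\alpha$; by the converse half of part (1) of Proposition~\ref{prop:nonstequivalents}, $\overline{\operatorname{d}}(C_{k})\geq\alpha$. Since this holds for every $k\in\mathbb{N}$ with $m=0$, this is exactly the statement that $A$ is upper thick of level $\alpha$. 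The one subtlety to watch—the main (mild) obstacle—is that the same $H$ must work for all $k$ at once: but $k$ is standard and Proposition~\ref{prop:unchanged measure proposition} holds for each standard $k$ on the fixed $H$ already chosen, so no saturation juggling across $k$ is needed; the only real input is the squeezing argument in the first paragraph that pins $\mu_{[-H,H]^{d}}({}^{\ast}\!A+\mathbb{Z}^{d})$ down to $\alpha$, which hinges precisely on the hypothesis $\lim_{i}\overline{\operatorname{d}}(A^{[i]})=\alpha$ rather than just $\geq\alpha$.
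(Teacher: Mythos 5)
Your proof is correct, but it takes a genuinely different route from the paper's. The paper chooses $H$ with $\mu_{[-H,H]^{d}}({}^{\ast}\!A)=\alpha$, observes that every finite-block measure $\mu_{[-H,H]^{d}}({}^{\ast}\!A^{[i]})$ must also equal $\alpha$, and then uses overspill to find an \emph{infinite} block size $i_{J}$ with $i_{J}/H$ infinitesimal; it decomposes $[-H,H]^{d}$ into $i_{J}$-blocks, argues that almost every block meeting ${}^{\ast}\!A$ carries relative Loeb measure $1$ of ${}^{\ast}\!A$, applies Corollary \ref{measure 1 syndeticity} on each such block, and sums. You instead stay global: the squeeze $\alpha\leq\mu_{[-H,H]^{d}}({}^{\ast}\!A)\leq\mu_{[-H,H]^{d}}({}^{\ast}\!A+\mathbb{Z}^{d})\leq\lim_{i}\overline{\operatorname{d}}(A^{[i]})=\alpha$ (the last inequality coming from Proposition \ref{prop:nonstequivalents}(1) applied to $A+[-i,i]^{d}$ or $A^{[i]}$ for each finite $i$, together with countable additivity over the increasing union ${}^{\ast}\!A+\mathbb{Z}^{d}=\bigcup_{i}({}^{\ast}\!A+[-i,i]^{d})$) pins both measures to $\alpha$, after which a single application of Proposition \ref{prop:unchanged measure proposition} on all of $[-H,H]^{d}$ plus transfer finishes the argument. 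This is essentially the mechanism the paper itself uses later in the $m=0$ case of Theorem \ref{thm:lower density strongly upper syndetic theorem}, and it buys a shorter proof with no overspill or block-counting; the paper's blockwise argument yields the extra local information that almost every infinite block meeting ${}^{\ast}\!A$ is almost entirely filled, though that is not needed for the stated conclusion. Two small points to clean up: the expression $\overline{\operatorname{d}}(A+\mathbb{Z}^{d})$ is literally $1$ (since $A+\mathbb{Z}^{d}=\mathbb{Z}^{d}$ for nonempty $A$) and should be written as $\lim_{i}\overline{\operatorname{d}}(A+[-i,i]^{d})$, which the paper's observations identify with $\lim_{i}\overline{\operatorname{d}}(A^{[i]})$; and your opening claim that this limit \emph{equals} $\mu_{[-H,H]^{d}}(E+\mathbb{Z}^{d})$ for the chosen $H$ is only an inequality in general, but your subsequent squeeze uses only the correct direction, so nothing breaks.
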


\begin{proof}
By Proposition \ref{prop:nonstequivalents} there exists an $H\in$ $^{\ast
}\mathbb{N}\left\backslash \mathbb{N}\right.  $ such that
\[
\mu_{\lbrack-H,H]^{d}}({}^{\ast}\!{A})=\alpha,
\]
and since $\lim_{i\rightarrow\infty}(\overline{\operatorname{d}}%
(A^{[i]}))=\alpha$ it must be that for all $i\in\mathbb{N}$
\[
\mu_{\lbrack-H,H]^{d}}({}^{\ast}\!{A}^{[i]})=\alpha,
\]
for if there were any finite $i$ and any\ $H\in$ $^{\ast}\mathbb{N}%
\left\backslash \mathbb{N}\right.  $ for which $\mu_{\lbrack-H,H]^{d}}%
({}^{\ast}\!A^{[i]})>\alpha,$ that would imply that $\lim_{i\rightarrow\infty
}(\overline{\operatorname{d}}(A^{[i]}))>\alpha$. \ We now have that for all
$j\in\mathbb{N}$ there exists $i_{j}>j$ such that
\[
\alpha-1/j\leq\frac{\left\vert {}^{\ast}\!{A}^{[i_{j}]}\cap\lbrack
-H,H]^{d}\right\vert }{(2H+1)^{d}}\leq\alpha+1/j.
\]
By overspill there exists a $J$ in $[1,H]$ such that $i_{J}/H$ is
infinitesimal and
\[
\alpha-1/J\leq\frac{\left\vert {}^{\ast}\!{A}^{[i_{J}]}\cap\lbrack
-H,H]^{d}\right\vert }{(2H+1)^{d}}\leq\alpha+1/J,
\]
so that
\[
\mu_{\lbrack-H,H]^{d}}({}^{\ast}\!{A}^{[i_{J}]})=\alpha.
\]
The set ${}^{\ast}\!{A}^{[i_{J}]}$ consists of a union of hypercubes of the
form $(i_{J})x+[0,i_{J}-1]^{^{d}}$. \ Let $N=\left\lfloor H/i_{J}\right\rfloor
$, and
\[
K=\left\vert \left\{  x\in\lbrack-N,N]^{d}:(i_{J})x+[0,i_{J}-1]^{^{d}%
}\subseteq\text{ }{}^{\ast}\!{A}^{[i_{J}]}\right\}  \right\vert .
\]
Then, since $i_{J}$ is infinitesimal compared to $H$, and every hypercube of
the form $(i_{J})x+[0,i_{J}-1]^{^{d}}$ is either completely contained in
${}^{\ast}\!{A}^{[i_{J}]}$ or is disjoint from it we may conclude that
\[
\operatorname{st}\left(  \frac{K}{\left(  2N+1\right)  ^{d}}\right)
=\mu_{\lbrack-H,H]^{d}}({}^{\ast}\!{A}^{[i_{J}]})=\alpha.
\]
But $K$ is also equal to
\[
\left\vert \left\{  x\in\lbrack-N,N]^{d}:(i_{J})x+[0,i_{J}-1]^{^{d}}\cap
{}^{\ast}\!{A}\right\}  \neq\varnothing\right\vert .
\]
Since $\mu_{\lbrack-H,H]^{d}}({}^{\ast}\!{A})=\alpha$, this implies that on
almost every such cube
\[
\mu_{(i_{J})x+[0,i_{J}-1]^{^{d}}}({}^{\ast}\!{A})=1.
\]
By Corollary \ref{measure 1 syndeticity}, for each standard $k\in\mathbb{N}$
\[
\mu_{(i_{J})x+[0,i_{J}-1]^{^{d}}}(\{x\in\text{ }{}^{\ast}\!{A}:x+[-k,k]^{d}%
\subseteq{}^{\ast}\!{A}\})=1.
\]
Summing over all the $N$ blocks that intersect ${}^{\ast}\!{A}$ now yields the
desired result.
\end{proof}

The analogous result does not hold for lower density. \ There exist sets
$A\subseteq\mathbb{Z}^{d}$ such that $\lim_{i\rightarrow\infty}%
(\underline{\operatorname{d}}(A^{[i]}))=\underline{\operatorname{d}}%
(A)=\alpha>0$ but $A$ is not lower thick of level $\beta$ for any $\beta>0$.
\ See the remarks after Example \ref{ex:upper density theorem best possible}
below. \ However the proof above can easily be adapted to show that if
$\underline{\operatorname{d}}(A)=\alpha>0$ and $\lim_{i\rightarrow\infty
}(\overline{\operatorname{d}}(A^{[i]}))=\alpha$, then $A$ is lower thick of
level $\alpha$.

\begin{theorem}
\label{upper density sum theorem}Let $A$ and $B$ be subsets of $\mathbb{Z}%
^{d}$ with the property that $\overline{\operatorname{d}}(A)=\alpha>0$ and
$\operatorname{BD}(B)>0$. \ Then $A+B$ is upper syndetic of level $\alpha$.
\end{theorem}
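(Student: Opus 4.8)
The plan is to transfer the problem to the nonstandard setting, locate a single infinite interval on which $^*A$ has relative Loeb measure $\alpha$, then pass to a good ``block scale'' so that the Lebesgue density theorem for the quotient by $\mathbb{Z}^d$ can be applied to produce a large set of points of density, and finally feed these density points into Theorem \ref{thm:density plus density is syndetic} and Proposition \ref{prop:syndeticimpliesresult} to get a uniform bound independent of $k$.

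\medskip

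\emph{First} I would use Proposition \ref{prop:nonstequivalents}(1) to fix $H \in {}^*\mathbb{N}\setminus\mathbb{N}$ with $\mu_{[-H,H]^d}({}^*A) \geq \alpha$; since we also want the ``$+\mathbb{Z}^d$'' mass to stay at $\alpha$ rather than inflate, I would combine this with the observation (made in the excerpt, via $\lim_i \overline{\operatorname{d}}(A^{[i]})$) and the overspill argument used in the proof of Proposition \ref{implies thick}: choose $H$ together with an infinite $J$ with $i_J/H \approx 0$ so that $\mu_{[-H,H]^d}({}^*A^{[i_J]}) = \overline{\operatorname{d}}(A) = \alpha$ — in other words work at a block scale where ${}^*A^{[i_J]}$ already captures essentially all of ${}^*A + \mathbb{Z}^d$'s measure but has measure only $\alpha$. (If $\lim_i \overline{\operatorname{d}}(A^{[i]}) > \alpha$ this exact equality fails, so I would instead work with an arbitrarily small $\epsilon$ and the weaker bound $\mu_{[-H,H]^d}({}^*A + \mathbb{Z}^d) \le \alpha + \epsilon$, accepting a level $\alpha - \epsilon$ in an intermediate step and then removing the $\epsilon$ at the very end by an appropriate choice of $H$ — more on this obstacle below.)

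\medskip

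\emph{Second}, with such an $H$ fixed, apply Theorem \ref{thm:LDTtheorem1} to $E = {}^*A \subseteq [-H,H]^d$: the set $\mathcal{D}_{{}^*A}$ of points of density is Loeb measurable with $\mu_{[-H,H]^d}(\mathcal{D}_{{}^*A}) = \mu_{[-H,H]^d}({}^*A + \mathbb{Z}^d) = \alpha$ (or $\geq \alpha$ in the worst case). Since $\operatorname{BD}(B) > 0$, Proposition \ref{prop:nonstequivalents}(3) gives some infinite $J'$ and some $b$ with $\mu_{b+[-J',J']^d}({}^*B) > 0$; I would actually want a point $b$ with $d_{{}^*B}(b) = 1$, which follows by applying Theorem \ref{thm:LDTtheorem1} to ${}^*B$ restricted to a suitable interval (an internal set of positive relative measure has points of density). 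Now for \emph{every} $a \in \mathcal{D}_{{}^*A}$ we have $d_{{}^*A}(a) = 1$ and $d_{{}^*B}(b) = 1$, so Theorem \ref{thm:density plus density is syndetic} says $a + b \in \mathcal{S}_{{}^*A + {}^*B}$. Hence $\mathcal{D}_{{}^*A} + b \subseteq \mathcal{S}_{{}^*A + {}^*B}$, and since translation by the fixed element $b$ preserves Loeb measure (after translating the ambient interval), $\mu(\mathcal{S}_{{}^*A + {}^*B}) \geq \alpha$ on the appropriate interval. Now ${}^*(A+B) = {}^*A + {}^*B$, and Proposition \ref{prop:syndeticimpliesresult} applied to $E = {}^*(A+B)$ (on the interval witnessing $\mu(\mathcal{S}_E) \ge \alpha$) yields, for every $\epsilon > 0$, an $m \in \mathbb{N}$ such that for all $k$, $\mu_{[-H,H]^d}(\{z : z+[-k,k]^d \subseteq {}^*(A+B)+[-m,m]^d\}) \geq \alpha - \epsilon$. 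Transferring back (the set in question is internal, a nonstandard extension of a standard set depending on $k,m$) and using the converse direction of Proposition \ref{prop:nonstequivalents}(1) gives $\overline{\operatorname{d}}(\{z : z+[-k,k]^d \subseteq A+B+[-m,m]^d\}) \geq \alpha - \epsilon$ for all $k$, which is upper syndeticity of level $\alpha - \epsilon$.

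\medskip

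\emph{The main obstacle} is getting level exactly $\alpha$ rather than $\alpha - \epsilon$. Two issues must be handled: (i) the inflation of measure when passing from ${}^*A$ to ${}^*A + \mathbb{Z}^d$, and (ii) the $\epsilon$ inherent in Proposition \ref{prop:syndeticimpliesresult} (Example \ref{ex:we need the epsilon} shows that for a \emph{fixed} $H$ one genuinely cannot take $\epsilon = 0$). The resolution for (ii) is that the single bound $m$ produced will in general depend on $\epsilon$ and on $H$; to recover the sharp statement I would, for each fixed $k$, run the argument with a sequence $\epsilon_j \to 0$ and correspondingly chosen $H_j$ — for each $j$ there is an interval $[-H_j, H_j]^d$ on which the relevant density-$k$ set has relative measure $\geq \alpha - \epsilon_j$, and since $\overline{\operatorname{d}}$ is a $\limsup$, the densities $\overline{\operatorname{d}}(\{z : z+[-k,k]^d \subseteq A+B+[-m_j,m_j]^d\})$ are at least $\alpha - \epsilon_j$; one then needs the $m_j$ to be bounded (or to pass to a single $m$ working for all $k$ simultaneously), which I expect follows because $m$ is controlled by how fast the nested sets $\mathcal{S}_E^i$ in Proposition \ref{prop:syndeticimpliesresult} exhaust $\mathcal{S}_E$, and this can be made uniform. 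For (i), the clean route is to replace $A$ by $A^{[n]}$ for large $n$: since $\lim_i \overline{\operatorname{d}}(A^{[i]}) = \overline{\operatorname{d}}(A) = \alpha$ is \emph{not} always true, but $A \subseteq A^{[n]}$ always, an upper-syndeticity witness for $A$ is automatically one for any set containing $A+B$; so one argues with $A^{[n]}$ (whose $+\mathbb{Z}^d$-saturated measure at the right scale is controlled) and transfers the conclusion down to $A$, choosing $n$ large enough that $\overline{\operatorname{d}}(A^{[n]})$ is within $\epsilon$ of $\alpha$ — again leaving an $\epsilon$ to be absorbed by the limiting argument above. I expect the bookkeeping to make the limiting argument rigorous to be the most delicate part of the write-up.
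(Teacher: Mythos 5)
Your overall machinery coincides with the paper's (points of density via Theorem \ref{thm:LDTtheorem1}, then Theorem \ref{thm:density plus density is syndetic} and Proposition \ref{prop:syndeticimpliesresult}, then transfer via Proposition \ref{prop:nonstequivalents}), and as you yourself note it only delivers upper syndeticity of level $\alpha-\epsilon$ for each $\epsilon>0$. The genuine gap is exactly at your ``main obstacle'': the limiting scheme you sketch to remove the $\epsilon$ does not close. Running the argument with $\epsilon_j\to 0$ on intervals $[-H_j,H_j]^d$ produces integers $m_j$ depending on both $\epsilon_j$ and $H_j$, and nothing in Proposition \ref{prop:syndeticimpliesresult} makes them bounded: the $m$ there is chosen so that $\mathcal{S}^m_E$ captures all but $\epsilon$ of $\mu(\mathcal{S}_E)$, and the rate at which the nested sets $\mathcal{S}^i_E$ exhaust $\mathcal{S}_E$ varies with $E$, i.e.\ with $H_j$; your claim that this ``can be made uniform'' is precisely what is missing. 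Without a single $m$ valid for all $k$, the conclusion is only ``upper syndetic of level $\alpha-\epsilon$ for every $\epsilon$,'' which is strictly weaker than the theorem, and Example \ref{ex:we need the epsilon} is the paper's own warning that on a fixed interval the $\epsilon$ in Proposition \ref{prop:syndeticimpliesresult} cannot simply be dropped.

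The paper removes the $\epsilon$ by a dichotomy that you come close to but do not exploit. If $\lim_{i\to\infty}\overline{\operatorname{d}}(A^{[i]})=\alpha$, then Proposition \ref{implies thick} already shows that $A$ itself is upper thick of level $\alpha$, hence so is $A+B$, and the theorem holds with $m=0$ with no syndeticity argument at all. Otherwise some finite $i$ has $\overline{\operatorname{d}}(A^{[i]})>\alpha$, and one can fix $H$ with $\mu_{[-H,H]^d}({}^*\!A+\mathbb{Z}^d)>\alpha+\epsilon$ for a fixed $\epsilon>0$; this strict slack is what absorbs the loss in Proposition \ref{prop:syndeticimpliesresult}, yielding one $m$ with measure at least $\alpha$ for all $k$ on that single $H$, after which the converse direction of Proposition \ref{prop:nonstequivalents}(1) finishes. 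A smaller point: when you select the density point $b$ of ${}^*\!B$ you must also arrange $|b|/H\approx 0$ (the paper does this by witnessing $\operatorname{BD}(B)>0$ inside a window below a prescribed $\nu$ with $\nu/H\approx 0$); otherwise translating $\mathcal{D}_{{}^*\!A}$ by $b$ can move its mass outside $[-H,H]^d$, and the transfer back to upper density, which needs an interval centered at $0$, no longer applies.
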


\begin{proof}
If $\lim_{i\rightarrow\infty}(\overline{\operatorname{d}}(A^{[i]}))=\alpha$
then by Proposition \ref{implies thick} $A$ itself is upper thick of level
$\alpha$, so $A+B$ is certainly upper thick of level $\alpha$.

So, it suffices to assume that there exists $i\in\mathbb{N}$ such that
$\overline{\operatorname{d}}(A^{[i]})>\alpha$. \ This implies that there
exists $H\in$ $^{\ast}\mathbb{N}\left\backslash \mathbb{N}\right.  $ such
that
\[
\mu_{[-H,H]^{d}}({}^{\ast}\!{A}+\mathbb{Z}^{d})\geq\mu_{\lbrack-H,H]^{d}}%
({}^{\ast}\!{A}^{[i]})>\alpha.
\]
Let $\epsilon>0$ be less than $\mu_{[-H,H]^{d}}({}^{\ast}\!{A}+\mathbb{Z}%
^{d})-\alpha$. \ Since $\operatorname{BD}(B)>0$ there exist arbitrarily large
standard $j$ such that for some $k\in\mathbb{Z}^{d}$
\[
\frac{\left\vert B\cap(k+[-j,j]^{d})\right\vert }{(2j+1)^{d}}%
>\operatorname{BD}(B)/2.
\]
Let $\nu\in$ $[1,H]\left\backslash \mathbb{N}\right.  $ be such that $\nu/H$
is infinitesimal. \ By Proposition \ref{prop:nonstequivalents} there exists
$J\in$ $^{\ast}\mathbb{N}\left\backslash \mathbb{N}\right.  $ and\ $k\in
[-H,H]^{d}$ such that
\[
\frac{\left\vert {}^{\ast}B\cap(k+[-J,J]^{d})\right\vert }{(2J+1)^{d}%
}>\operatorname{BD}(B)/2,
\]
and $\left\vert k\right\vert $ and $J$ are less than $\nu$. \ Along with
Theorem \ref{thm:LDTtheorem1}, this shows that there is a point of density $b$
of $^{\ast}B$ such that $\frac{\left\vert b\right\vert }{H}$ is infinitesimal.
\ Then by Proposition \ref{thm:density plus density is syndetic} every point
in $b+\mathcal{D}_{{}^{\ast}\!{A}}+\mathbb{Z}^{d}$ is a syndetic point of
${}^{\ast}\!(A+B)$, and since $\left\vert b\right\vert $ is infinitesimal to
$H$, almost all elements of $b+(\mathcal{D}_{{}^{\ast}\!{A}}{}\cap$
$[-H,H]^{d})$ are in $[-H,H]^{d}$. \ 

By Theorem \ref{thm:LDTtheorem1} we know that
\[
\mu_{[-H,H]^{d}}(\mathcal{D}_{{}^{\ast}\!{A}})=\mu_{[-H,H]^{d}}({}^{\ast}%
\!{A}+\mathbb{Z}^{d})>\alpha+\epsilon\text{,}\
\]
so that
\[
\mu_{[-H,H]^{d}}(\mathcal{S}_{{}^{\ast}\!(A+B)})\geq\mu_{\lbrack-H,H]^{d}}%
({}^{\ast}\!{A}+\mathbb{Z}^{d})>\alpha+\epsilon.
\]
By Proposition \ref{prop:syndeticimpliesresult} there must exist a standard
$m\in\mathbb{N}$ such that for all (standard) $k\in\mathbb{N}$
\[
\mu_{[-H,H]^{d}}\left(  \left\{  z\in[-H,H]^{d}:z+[-k,k]^{d}\subseteq\text{
}{}^{\ast}\!(A+B)+[-m,m]^{d}\right\}  \right)  \geq\alpha.
\]

By the nonstandard characterization of upper asymptotic density (Proposition
\ref{prop:nonstequivalents}) we obtain the desired result.
\end{proof}

The theorem above is, in general, the best possible, as is shown in the
example below in one dimension, with densities defined on $[1,n]$ rather than
$[-n,n]$.

\begin{example}
\label{ex:upper density theorem best possible}Let
\[
A=\bigcup_{n=1}^{\infty}[2^{n},2^{n}+2^{n-1}]\text{ and }B=\bigcup
_{n=1}^{\infty}[n!,n!+n].
\]
then
\[
\underline{\operatorname{d}}(A)=1/2=\underline{\operatorname{d}}%
(\{n:[n,n+k]\subseteq A+B+[0,m]\}))
\]
for all $m$ and $k$, and
\[
\overline{\operatorname{d}}(A)=2/3=\overline{\operatorname{d}}%
(\{n:[n,n+k]\subseteq A+B+[0,m]\}))
\]
for all $m$ and $k$ (note that these densities would be $1/4$ and $1/3$ if we
defined the densities on $[-n,n]$ as in our general definition).
\end{example}

We note that if $A$ is the set from the Example
\ref{ex:upper density theorem best possible}, and the set $C$ is defined to
equal $A\cap\lbrack(2n)!,(2n+1)!)$ on all $[(2n)!,(2n+1)!)$, and
$(2\mathbb{N)\cap}[(2n+1)!,(2n+2)!)$ on all $[(2n+1)!,(2n+2)!)$, then $C$ is
an example of a set with the property that $\lim_{i\rightarrow\infty
}(\underline{\operatorname{d}}(C^{[i]}))=\underline{\operatorname{d}}(C)>0$
but $C$ is not lower thick of level $\beta$ for any $\beta>0$.

In Example \ref{ex:upper density theorem best possible}\ where the results are
sharp we note that the densities are the same for $A$ as they are for every
$A^{[j]}$ for $j$ finite, and the conclusion holds with $m=0$. \ This suggests
the following slightly stronger version of the theorem above. \ The proof is
immediate from the proof of the theorem above.

\begin{corollary}
\label{Corollary: upper thick} Let $A$ and $B$ be subsets of $\mathbb{Z}^{d}$
with the property that $\overline{\operatorname{d}}(A)=\alpha>0$ and
$\operatorname{BD}(B)>0$. \ Let $\alpha^{\prime}=\lim_{i\rightarrow\infty
}(\overline{\operatorname{d}}(A_{[i]}))$. \ If $\alpha^{\prime}>\alpha$ then
$A+B$ is upper syndetic of level $r$ for any $r<\alpha^{\prime}$. \ If
$\alpha=\alpha^{\prime}$ then $A+B$ is upper thick of level $\alpha.$
\end{corollary}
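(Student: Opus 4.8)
The plan is to re-run the proof of Theorem \ref{upper density sum theorem} essentially verbatim, tracking the sharper quantity $\alpha'$ wherever $\alpha$ appeared; the two clauses of the corollary correspond exactly to the two cases appearing in that proof.

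First I would dispose of the case $\alpha=\alpha'$. Since $A_{[i]}$ and $A^{[i]}$ have the same asymptotic densities, the hypothesis $\alpha=\alpha'$ reads $\lim_{i\to\infty}(\overline{\operatorname{d}}(A^{[i]}))=\overline{\operatorname{d}}(A)=\alpha$, which is precisely the hypothesis of Proposition \ref{implies thick}; hence $A$ is upper thick of level $\alpha$. Fixing any $b\in B$ (nonempty since $\operatorname{BD}(B)>0$), the translate $A+b\subseteq A+B$ is upper thick of level $\alpha$ (upper thickness of level $\alpha$ being translation invariant), and this property passes to the superset $A+B$, so $A+B$ is upper thick of level $\alpha$.

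For the case $\alpha'>\alpha$, fix any $r<\alpha'$. Since $\lim_{i}(\overline{\operatorname{d}}(A^{[i]}))=\alpha'>r$, I can choose $i\in\mathbb{N}$ with $\overline{\operatorname{d}}(A^{[i]})>r$ (only the value of the limit is needed, not monotonicity of the sequence). Proposition \ref{prop:nonstequivalents} then furnishes $H\in{}^{\ast}\mathbb{N}\setminus\mathbb{N}$ with $\mu_{[-H,H]^{d}}({}^{\ast}\!A^{[i]})>r$, whence $\mu_{[-H,H]^{d}}({}^{\ast}\!A+\mathbb{Z}^{d})\ge\mu_{[-H,H]^{d}}({}^{\ast}\!A^{[i]})>r$. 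From here the argument copies that of Theorem \ref{upper density sum theorem} with $r$ replacing $\alpha$: using $\operatorname{BD}(B)>0$, Proposition \ref{prop:nonstequivalents}, and Theorem \ref{thm:LDTtheorem1} one obtains a point of density $b$ of ${}^{\ast}B$ with $|b|/H\approx 0$; Theorem \ref{thm:density plus density is syndetic} shows every point of $b+\mathcal{D}_{{}^{\ast}\!A}+\mathbb{Z}^{d}$ is a point of syndeticity of ${}^{\ast}\!(A+B)$; since $|b|/H\approx 0$, Theorem \ref{thm:LDTtheorem1} gives $\mu_{[-H,H]^{d}}(\mathcal{S}_{{}^{\ast}\!(A+B)})\ge\mu_{[-H,H]^{d}}({}^{\ast}\!A+\mathbb{Z}^{d})>r$; and Proposition \ref{prop:syndeticimpliesresult}, applied with $\epsilon$ chosen below $\mu_{[-H,H]^{d}}(\mathcal{S}_{{}^{\ast}\!(A+B)})-r$, produces $m\in\mathbb{N}$ with $\mu_{[-H,H]^{d}}(\{z\in[-H,H]^{d}:z+[-k,k]^{d}\subseteq{}^{\ast}\!(A+B)+[-m,m]^{d}\})\ge r$ for all $k$. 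The converse half of Proposition \ref{prop:nonstequivalents}(1) finally translates this to $\overline{\operatorname{d}}(\{z:z+[-k,k]^{d}\subseteq(A+B)+[-m,m]^{d}\})\ge r$, i.e.\ $A+B$ is upper syndetic of level $r$.

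The only new ingredient over the proof of Theorem \ref{upper density sum theorem} is the observation that $\mu_{[-H,H]^{d}}({}^{\ast}\!A+\mathbb{Z}^{d})$ can be pushed arbitrarily close to $\alpha'=\lim_{i}(\overline{\operatorname{d}}(A^{[i]}))$, not merely above $\alpha=\overline{\operatorname{d}}(A)$; the rest is verbatim. Accordingly there is no genuine obstacle here — the only point needing a moment's care is checking that the dichotomy $\alpha=\alpha'$ versus $\alpha<\alpha'$ lines up with the hypothesis of Proposition \ref{implies thick}, and that one may select $i$ with $\overline{\operatorname{d}}(A^{[i]})$ as close to $\alpha'$ as wished even though $(\overline{\operatorname{d}}(A^{[i]}))_{i}$ need not be monotone.
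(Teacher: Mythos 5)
Your proposal is correct and follows the paper's own route: the paper proves the corollary by noting it is immediate from the proof of Theorem \ref{upper density sum theorem}, with the case $\alpha=\alpha'$ handled by Proposition \ref{implies thick} (upper thickness passing to $A+B$ via a translate) and the case $\alpha'>\alpha$ handled by choosing $i$ with $\overline{\operatorname{d}}(A^{[i]})>r$ and rerunning the density-point/syndeticity argument with $r$ in place of $\alpha$. The extra details you supply (the translate $A+b\subseteq A+B$ and the selection of $i$ near the limit despite non-monotonicity) are exactly the points the paper leaves implicit.
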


Combining ideas from Example \ref{ex:we need the epsilon} and Example
\ref{ex:upper density theorem best possible} it is easy to see that Corollary
\ref{Corollary: upper thick} cannot be improved to allow $r$ to equal
$\alpha^{\prime}$. \ The set $A$ from Example \ref{ex:we need the epsilon} has
$\lim_{i\rightarrow\infty}(\overline{\operatorname{d}}(A_{[i]}))=1$, and if we
add that set to the set $B$ from Example
\ref{ex:upper density theorem best possible} then $A+B$ is not upper syndetic
of level 1.

\section{Lower syndeticity and
sumsets\label{Section: Lower syndeticity and sumsets}}

In this section we focus on how the previous theorem can be improved if the
set $A$ has the stronger property of positive lower density. \ In the proof of
Theorem \ref{upper density sum theorem} we used the fact that if $C\subseteq$
$\mathbb{Z}^{d}$ is such that for some $H\in$ $^{\ast}\mathbb{N}%
\left\backslash \mathbb{N}\right.  $
\[
\mu_{[-H,H]^{d}}\left(  \mathcal{S}_{^{\ast}C}\right)  >\alpha,
\]
then $C$ is upper syndetic of level $\alpha$. The analogous result for lower
density is far from true, as Example \ref{ex:optimal} shows (in one dimension).

\begin{example}
\label{ex:optimal} The set $C$ constructed below has the property that almost
all points in $^{\ast}C$ (on any infinite interval) are points of syndeticity
of $^{\ast}C$, and $\underline{\operatorname{d}}(C)=1/2$. \ However, for any
$m$
\[
\underline{\operatorname{d}}\{n\in\mathbb{N}:n+[-2m,2m]\subseteq A+[0,m]\}=0.
\]

Let $s_{i}$ be the sequence 1,2,1,2,3,1,2,3,4,1,2,3,4,5..., and let
$C\subseteq\mathbb{N}$ be such that:
\[
\text{on }[i!,(i+1)!),\text{ }n\in C\text{ iff }n\equiv\{0,1,..,s_{i}%
-1\}\operatorname{mod}2s_{i}.
\]
Thus, on $[i!,(i+1)!)$, $C$ consists of blocks of length $s_{i}$, with the
blocks alternating between being completely contained in $C$ and not
intersecting $C$. \ We note that for any given $m$, if we let $H=(I+1)!$ and
$s_{I}$ be such that $2m<s_{I}<3m$ then
\[
\mu_{[1,H]}\left(  \{n\in\text{ }^{\ast}\mathbb{N}:n+[-2m,2m]\subseteq\text{
}^{\ast}C+[0,m]\}\right)  =0.
\]
\ We also note that the only points in $^{\ast}C\left\backslash C\right.  $
that are not points of syndeticity are those that are within a standard
distance of an endpoint of one of the intervals of nonstandard length.
\end{example}

Example \ref{ex:optimal} shows that we cannot use the same proof technique
from Section \ref{Section: Upper syndeticity and sumsets} if we want to prove
an analogous result with a conclusion involving lower density. \ These
techniques do allow us to conclude strong upper syndeticity.

\begin{theorem}
\label{thm:lower density strongly upper syndetic theorem}Let $A$ and $B$ be
subsets of $\mathbb{Z}^{d}$ with the property that
$\underline{\operatorname{d}}(A)=\alpha>0$ and $\operatorname{BD}(B)>0$.
\ Then $A+B$ is strongly upper syndetic of level $\alpha$.
\end{theorem}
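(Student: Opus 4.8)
The plan is to run the nonstandard machinery of Section~\ref{Section: Upper syndeticity and sumsets} relative to an arbitrary infinite subset $S\subseteq\mathbb{N}$, rather than along a single interval $[-H,H]^d$. Given such an $S$, I first pass to a nonstandard element $H>\mathbb{N}$ lying in the nonstandard extension $^{\ast}S$ of the sequence enumerating $S$; this guarantees that computing $\mu_{[-H,H]^d}$-measures and then transferring back produces $\limsup$ statements along $S$ rather than along all of $\mathbb{N}$. More precisely, I would enumerate $S=\{n_1<n_2<\dots\}$, pick an infinite $\nu$ with $n_\nu>\mathbb{N}$, and set $H=n_\nu$; any internal set of $\mu_{[-H,H]^d}$-measure $\geq r$ then has, by transfer, standard density $\geq r-\epsilon$ along $[-n,n]^d$ for arbitrarily large $n\in S$, which is exactly the $\limsup_{n\in S}$ condition.

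Next I would reproduce the two-case analysis from the proof of Theorem~\ref{upper density sum theorem}, but it is cleaner here: since $\underline{\operatorname{d}}(A)=\alpha$, Proposition~\ref{prop:nonstequivalents}(2) gives $\mu_{[-H,H]^d}(^{\ast}\!A)\geq\alpha$ for \emph{every} infinite $H$, in particular for the $H=n_\nu$ chosen above. Hence $\mu_{[-H,H]^d}(^{\ast}\!A+\mathbb{Z}^d)\geq\alpha$ as well, and by Theorem~\ref{thm:LDTtheorem1} we get $\mu_{[-H,H]^d}(\mathcal{D}_{^{\ast}\!A})\geq\alpha$. Since $\operatorname{BD}(B)>0$, Proposition~\ref{prop:nonstequivalents}(3) together with Theorem~\ref{thm:LDTtheorem1} produces a point of density $b$ of $^{\ast}\!B$ with $|b|/H$ infinitesimal (the same argument as in the proof of Theorem~\ref{upper density sum theorem}: find $J,k$ with $|k|,J<\nu$ for a fixed infinitesimally-small $\nu$ relative to $H$, witness positive relative measure of $^{\ast}B$ on $k+[-J,J]^d$, then take a density point inside). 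By Theorem~\ref{thm:density plus density is syndetic}, every point of $b+\mathcal{D}_{^{\ast}\!A}+\mathbb{Z}^d$ is a point of syndeticity of $^{\ast}(A+B)$, so
\[
\mu_{[-H,H]^d}\bigl(\mathcal{S}_{^{\ast}(A+B)}\bigr)\geq\mu_{[-H,H]^d}\bigl(b+\mathcal{D}_{^{\ast}\!A}\bigr)\geq\alpha,
\]
using that $|b|/H\approx 0$ so the translate $b+(\mathcal{D}_{^{\ast}\!A}\cap[-H,H]^d)$ still lies (up to a null set) inside $[-H,H]^d$. Proposition~\ref{prop:syndeticimpliesresult} then yields a standard $m\in\mathbb{N}$ with $\mu_{[-H,H]^d}(\{z:z+[-k,k]^d\subseteq{}^{\ast}(A+B)+[-m,m]^d\})\geq\alpha-\epsilon$ for all standard $k$ — or, invoking instead Proposition~\ref{prop:unchanged measure proposition} / the structure of $\mathcal{S}_{^{\ast}(A+B)}$ directly, $\geq\alpha$.

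The one genuinely new point — and the step I expect to need the most care — is the transfer back: I want $m$ to be chosen \emph{independently of $S$} only in the weak sense the definition requires (the definition of strongly upper syndetic of level $\alpha$ allows $m$ to depend on $S$), and I must be sure that the internal set $\{z\in[-H,H]^d:z+[-k,k]^d\subseteq{}^{\ast}(A+B)+[-m,m]^d\}$ having $\mu_{[-H,H]^d}$-measure $\geq\alpha$ really does descend, via the fact that $H\in{}^{\ast}S$, to $\limsup_{n\in S}\tfrac{1}{(2n+1)^d}|\{z\in[-n,n]^d:\dots\}|\geq\alpha$. This is a transfer argument: the internal statement ``there exist arbitrarily large $n\in S$ with the density exceeding $\alpha-\tfrac1j$'' holds because $H=n_\nu\in{}^{\ast}S$ witnesses it nonstandardly for each standard $j$; letting $j\to\infty$ gives the $\limsup$. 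I would spell this out carefully, since it is the only place where the hypothesis ``for \emph{any} infinite sequence $S$'' (as opposed to a single density computation) is actually used, and it is exactly why the conclusion is strong upper syndeticity rather than lower syndeticity — Example~\ref{ex:optimal} shows the latter genuinely fails.
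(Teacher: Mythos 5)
Your setup is essentially the paper's: take $H=s_I\in{}^{\ast}S$, use $\underline{\operatorname{d}}(A)=\alpha$ to get $\mu_{[-H,H]^d}({}^{\ast}\!A)\geq\alpha$ at that particular $H$, produce a density point $b$ of ${}^{\ast}B$ with $|b|/H\approx 0$, apply Theorem \ref{thm:density plus density is syndetic} to get $\mu_{[-H,H]^d}(\mathcal{S}_{{}^{\ast}(A+B)})\geq\alpha$, and transfer back along $S$ (your last paragraph on the transfer is correct). The genuine gap is the final measure-theoretic step. At the fixed $H$ you only know $\mu_{[-H,H]^d}(\mathcal{S}_{{}^{\ast}(A+B)})\geq\alpha$ with no slack, and Proposition \ref{prop:syndeticimpliesresult} then yields, for each $\epsilon>0$, some $m=m(\epsilon)$ with internal measure $\geq\alpha-\epsilon$; after transfer this only gives strong upper syndeticity of level $\alpha-\epsilon$ with $m$ depending on $\epsilon$, which is strictly weaker than level $\alpha$, since the definition demands a single $m$ (per $S$) whose $\limsup$ is $\geq\alpha$ for all $k$. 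Your fallback --- invoking Proposition \ref{prop:unchanged measure proposition} or ``the structure of $\mathcal{S}_{{}^{\ast}(A+B)}$ directly'' to upgrade to $\geq\alpha$ --- does not close this: since $\mathcal{S}_{{}^{\ast}(A+B)}+\mathbb{Z}^d=\mathcal{S}_{{}^{\ast}(A+B)}$, that proposition only says that almost every point of $\mathcal{S}$ has its $k$-cube inside $\mathcal{S}$, and membership in $\mathcal{S}$ supplies a finite $m$ that varies from point to point; Example \ref{ex:we need the epsilon} shows $\mu(\mathcal{S}_E)=1$ is compatible with every fixed $m$ capping the internal measure at $1-2^{-m}$, so no single $m$ can be extracted from $\mu(\mathcal{S})\geq\alpha$ alone.

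The paper closes exactly this gap with a dichotomy at the fixed $H=s_I$, and this is the one idea missing from your proposal. If $\mu_{[-H,H]^d}({}^{\ast}\!A+\mathbb{Z}^d)=\mu_{[-H,H]^d}({}^{\ast}\!A)$, then Proposition \ref{prop:unchanged measure proposition} applied to ${}^{\ast}\!A$ itself (not to $\mathcal{S}$) gives, for every standard $k$, $\mu_{[-H,H]^d}(\{x\in{}^{\ast}\!A:x+[-k,k]^d\subseteq{}^{\ast}\!A\})\geq\alpha$ with no $\epsilon$-loss, and one concludes with $m=0$. Otherwise $\mu_{[-H,H]^d}({}^{\ast}\!A+\mathbb{Z}^d)>\alpha$ strictly, hence $\mu_{[-H,H]^d}(\mathcal{S}_{{}^{\ast}(A+B)})\geq\mu_{[-H,H]^d}(\mathcal{D}_{{}^{\ast}\!A})=\mu_{[-H,H]^d}({}^{\ast}\!A+\mathbb{Z}^d)>\alpha$, and choosing $\epsilon$ in Proposition \ref{prop:syndeticimpliesresult} smaller than this slack produces a single $m$ with internal measure $\geq\alpha$ for all $k$; only then does your transfer along $S$ deliver level $\alpha$. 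Note that the lower-density hypothesis is precisely what makes this dichotomy usable at an arbitrary $H\in{}^{\ast}S$ (you need $\mu_{[-H,H]^d}({}^{\ast}\!A)\geq\alpha$ at that particular $H$, which upper density would not give), so the case split is not a removable technicality but the crux of the argument.
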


\begin{proof}
Let $S\subseteq\mathbb{N}$ be any sequence going to infinity. \ Let $H=s_{I}$,
where $I\in$ $^{\ast}\mathbb{N}\left\backslash \mathbb{N}\right.  $. \ Since
$\underline{\operatorname{d}}(A)=\alpha$ we know that $\mu_{[-H,H]^{d}}%
({}^{\ast}\!{A})\geq\alpha$. \ If
\[
\mu_{[-H,H]^{d}}({}^{\ast}\!{A}+\mathbb{Z}^{d})=\mu_{[-H,H]^{d}}(^{\ast}A),
\]
then by Proposition \ref{prop:unchanged measure proposition}, for each
standard $k\in\mathbb{N}$
\[
\mu_{[-H,H]^{d}}(\{x\in\text{ }{}^{\ast}\!{A}:x+[-k,k]^{d}\subseteq^{\ast
}A\})=\alpha,
\]
and we may let $m=0$. \ 

If
\[
\mu_{[-H,H]^{d}}({}^{\ast}\!{A}+\mathbb{Z}^{d})>\mu_{[-H,H]^{d}}({}^{\ast
}\!{A}),
\]
then by arguments identical to those used in the proof of Theorem
\ref{upper density sum theorem}, there must exist $m\in\mathbb{N}$ such that
for all $k\in\mathbb{N}$
\[
\mu_{[-H,H]^{d}}\left(  \left\{  z\in[-H,H]^{d}:z+[-k,k]^{d}\subseteq\text{
}{}^{\ast}\!(A+B)+[0,m]^{d}\right\}  \right)  \geq\alpha.
\]
The result now follows by transfer, since for this $m,$ any $\epsilon>0$ and
any $i,k\in\mathbb{N}$ there exists $j>i$ such that
\[
\frac{1}{(2s_{j}+1)^{d}}\left\vert \left\{  z\in\lbrack-s_{j},s_{j}%
]^{d}:z+[-k,k]^{d}\subseteq A+B+[0,m]^{d}\right\}  \right\vert \geq
\alpha-\epsilon.
\]

\end{proof}

In fact it is not true that the conclusion in the theorem above can be
improved to \textquotedblleft$A+B$ is lower syndetic of level $\alpha
$\textquotedblright\ (see Example \ref{ex:big example} below).
\ The\ following theorem is the strongest conclusion we can make involving
lower syndeticity.

\begin{theorem}
Let $A$ and $B$ be subsets of $\mathbb{Z}^{d}$ with the property that
$\underline{\operatorname{d}}(A)=\alpha>0$ and $\operatorname{BD}(B)>0$.
\ Then for any $\epsilon>0$, $A+B$ is lower syndetic of level $\alpha
-\epsilon$.
\end{theorem}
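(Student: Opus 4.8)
The plan is to recast the statement in nonstandard terms and follow the strategy behind Theorem~\ref{upper density sum theorem}; the one genuinely new difficulty will be that the syndeticity witness $m$ must now be chosen uniformly over all scales, and that is what forces the loss of $\epsilon$. By Proposition~\ref{prop:nonstequivalents}(2) it suffices to produce a single $m\in\mathbb{N}$ such that $\mu_{[-H,H]^{d}}(\mathcal{S}^{m})\geq\alpha-\epsilon$ for \emph{every} infinite $H$, where $\mathcal{S}^{m}:=\{z:z+\mathbb{Z}^{d}\subseteq{}^{\ast}(A+B)+[-m,m]^{d}\}$: indeed $\{z:z+[-k,k]^{d}\subseteq{}^{\ast}(A+B)+[-m,m]^{d}\}\supseteq\mathcal{S}^{m}$ for every $k$, so the Loeb measure of the former is then also $\geq\alpha-\epsilon$, which is exactly the nonstandard form of ``$A+B$ is lower syndetic of level $\alpha-\epsilon$''. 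Note that $\mathcal{S}^{m}$ increases with $m$ to $\mathcal{S}_{{}^{\ast}(A+B)}$.

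I would first record the unconditional fact that $\mu_{[-H,H]^{d}}(\mathcal{S}_{{}^{\ast}(A+B)})\geq\alpha$ for every infinite $H$. Fixing $H$ and repeating the argument of Theorem~\ref{upper density sum theorem}: since $\operatorname{BD}(B)>0$, Proposition~\ref{prop:nonstequivalents}(3) and Theorem~\ref{thm:LDTtheorem1} produce a point of density $b$ of ${}^{\ast}B$ with $|b|/H$ infinitesimal; by Theorem~\ref{thm:density plus density is syndetic}, $b+\mathcal{D}_{{}^{\ast}A}\subseteq\mathcal{S}_{{}^{\ast}(A+B)}$; and since $\underline{\operatorname{d}}(A)=\alpha$ gives $\mu_{[-H,H]^{d}}({}^{\ast}A)\geq\alpha$, Theorem~\ref{thm:LDTtheorem1} yields $\mu_{[-H,H]^{d}}(\mathcal{D}_{{}^{\ast}A})=\mu_{[-H,H]^{d}}({}^{\ast}A+\mathbb{Z}^{d})\geq\alpha$, while translating by the infinitesimal fraction $b$ leaves relative Loeb measure unchanged. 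Consequently, for each \emph{fixed} $H$ some finite $m=m(H)$ already satisfies $\mu_{[-H,H]^{d}}(\mathcal{S}^{m(H)})\geq\alpha-\epsilon$ (with $m(H)=0$ whenever $\mu_{[-H,H]^{d}}({}^{\ast}A+\mathbb{Z}^{d})=\mu_{[-H,H]^{d}}({}^{\ast}A)$, by Proposition~\ref{prop:unchanged measure proposition}). The whole content of the theorem is to bound $m(H)$ independently of $H$.

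To force this uniformity I would thicken $B$: choose $m_{0}$ with $\operatorname{BD}(B+[-m_{0},m_{0}]^{d})>1-\delta$ for a suitably small $\delta=\delta(\epsilon)$, put $B_{0}=B+[-m_{0},m_{0}]^{d}$, and observe that $A+B+[-m,m]^{d}=A+B_{0}+[-(m-m_{0}),m-m_{0}]^{d}$, so it is enough to prove the assertion for $B_{0}$ and add $m_{0}$ to the resulting witness. The point of this thickening is that $B_{0}$ has $(1-\delta)$-dense cubes at all large scales; hence, for any $a\in{}^{\ast}A$ and any point of density $z$ of ${}^{\ast}B_{0}$ one has $a+z\in\mathcal{D}_{{}^{\ast}(A+B_{0})}$, and, combining Theorem~\ref{thm:density plus density is syndetic} with Proposition~\ref{prop:points of density achieve full measure}, a witness realizing $a+z\in\mathcal{S}_{{}^{\ast}(A+B_{0})}$ is controlled by the scale at which the relevant relative measures first equal $1$ and by $\delta$ alone, not by $a$. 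Then, arguing by contradiction: if for every $m$ there were an infinite $H_{m}$ with $\mu_{[-H_{m},H_{m}]^{d}}(\mathcal{S}^{m})<\alpha-\epsilon$, I would feed $H_{m}$ (together with a sub-scale $J$, $J/H_{m}$ infinitesimal, on which $B_{0}$ is $(1-\delta)$-dense) back into the mechanism above to obtain one fixed witness $\bar m$ with $\mu_{[-H_{m},H_{m}]^{d}}(\mathcal{S}^{\bar m})\geq\alpha-\epsilon/2$; taking $m\geq\bar m$ then contradicts the choice of $H_{m}$, since $\mathcal{S}^{m}\supseteq\mathcal{S}^{\bar m}$. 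The slack $\epsilon$ is precisely what absorbs the choices of $\delta$ and $m_{0}$, and by Examples~\ref{ex:optimal} and~\ref{ex:big example} it cannot be removed.

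The main obstacle is exactly this last step: converting the scale-dependent witnesses $m(H)$ coming from Theorem~\ref{thm:density plus density is syndetic} into a single $m$. Everything before it is essentially a repackaging of Theorem~\ref{upper density sum theorem} and Proposition~\ref{prop:syndeticimpliesresult}; the new ingredient is the observation that, after thickening $B$ so that $\operatorname{BD}(B_{0})$ is close to $1$, the syndeticity witness attached to a sum of a point of density of ${}^{\ast}A$ and a point of density of ${}^{\ast}B_{0}$ becomes uniformly bounded, at the price of lowering the density level by an arbitrarily small amount.
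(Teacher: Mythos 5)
Your reduction to a single $m$ with $\mu_{[-H,H]^{d}}(\mathcal{S}^{m})\geq\alpha-\epsilon$ for every infinite $H$ is correct, and your opening move (thicken $B$ to $B_{m_0}=B+[-m_{0},m_{0}]^{d}$ with Banach density close to $1$) is exactly the paper's first step. But the heart of your argument is the ``observation'' that, after this thickening, the syndeticity witness attached to a sum of a point of density of ${}^{\ast}A$ and a point of density of ${}^{\ast}B_{0}$ is bounded in terms of $\delta$ alone, independently of the point and of the scale $H$. This is asserted, not proved, and it is essentially the whole theorem: the tools you cite cannot deliver it. Being a point of density only controls the Loeb measure of the $+\mathbb{Z}^{d}$-saturation at infinite scales; it says nothing about how ${}^{\ast}A$ or ${}^{\ast}B_{0}$ sits inside the individual coset $x+\mathbb{Z}^{d}$, and it is precisely the coset-level structure that determines the finite witness. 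A Loeb-measure-$1$ statement on $a+z+[-K,K]^{d}$ (Proposition \ref{prop:points of density achieve full measure}) is compatible with ${}^{\ast}(A+B_{0})$ missing the single coset $a+z+\mathbb{Z}^{d}$ on arbitrarily long finite stretches, since that coset is a null set; the witness in Theorem \ref{thm:density plus density is syndetic} comes from saturation and carries no uniform bound. Example \ref{ex:optimal} is exactly a warning about this phenomenon: the syndeticity points can have full measure at every scale while the witnesses degrade with the scale, killing lower syndeticity at that level. Your contradiction argument inherits the problem and is circular as written: $\bar m$ is extracted from $H_{m}$, so ``taking $m\geq\bar m$'' is legitimate only if $\bar m$ does not depend on $H_{m}$ --- which is precisely the unproven uniformity claim.

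The paper closes this gap by abandoning density points on the $A$-side and working at the coset level from the start, so that the only witness ever used is the thickening parameter $m$ itself. It defines $S$ to be the set of $x\in{}^{\ast}A\cap[-H,H]^{d}$ around which ${}^{\ast}A$ has relative density $>\epsilon/2$ at arbitrarily large \emph{finite} scales (hence $\overline{\operatorname{d}}(({}^{\ast}A-x)\cap\mathbb{Z}^{d})\geq\epsilon/2$), and shows $\mu_{[-H,H]^{d}}(S)\geq\alpha-2\epsilon/3$ by bounding the complementary set $T$ via the Birkhoff ergodic theorem --- a step your sketch has no analogue of, since $\mu(\mathcal{D}_{{}^{\ast}A})\geq\alpha$ produces the wrong kind of points. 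It then picks $t$ with $\|t\|/H\approx 0$ such that ${}^{\ast}B_{m}$ has lower density close to $1$ on the coset $t+\mathbb{Z}^{d}$, and a pigeonhole on cosets (upper density of ${}^{\ast}A-x$ plus lower density of ${}^{\ast}B_{m}-t$ on $\mathbb{Z}^{d}$ exceeding $1$) gives $x+t+\mathbb{Z}^{d}\subseteq{}^{\ast}(A+B_{m})={}^{\ast}(A+B)+[-m,m]^{d}$ outright, with no further witness needed; uniformity in $H$ is then automatic and the measure of witnesses is at least $\mu(S)\geq\alpha-\epsilon$. Unless you can actually prove your uniform-witness claim (which I doubt is true as stated), you need to replace the density-point mechanism with an argument of this coset-level type.
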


\begin{proof}
Without loss of generality, let $\epsilon<\alpha/2$. So we can assume that
$\alpha-\epsilon>\epsilon$. Choose $m\in\mathbb{N}$ sufficiently large so that
$\operatorname{BD}(B+[-m,m]^{d})>1-\epsilon$. \ Let $B_{m}=B+[-m,m]^{d}$. It
suffices to show that for any $H\in\,^{\ast}\mathbb{N}\smallsetminus
\mathbb{N}$ and any $k\in\mathbb{N}$,
\[
\mu_{[-H,H]^{d}}\left(  \left\{  x\in[-H,H]^{d}:x+[-k,k]^{d}\subseteq
\,{}^{\ast}\!(A+B)+[-m,m]^{d}\right\}  \right)  \geq\alpha-\epsilon.
\]

We first note that by a pigeonhole argument we can prove that if
$x,y,x+y\in[-H,H]^{d}$ are such that
\begin{equation}
\overline{\operatorname{d}}((^{\ast}\!A-x)\cap\mathbb{Z}^{d}%
)+\underline{\operatorname{d}}((^{\ast}\!B_{m}-y)\cap\mathbb{Z}^{d})>1,
\label{pigeonhole}%
\end{equation}%
\begin{equation}
\mbox{then }\,(x+y+\mathbb{Z}^{d})\cap[-H,H]^{d}\subseteq\,^{\ast}(A+B_{m}).
\label{pigeonhole2}%
\end{equation}

This is true because if $x$ and $y$ satisfy \ref{pigeonhole} then any
$x^{\prime}\in x+\mathbb{Z}^{d}$ and $y^{\prime}\in y+\mathbb{Z}^{d}$ also
satisfy this condition, so that ${}^{\ast}\!{A}-x^{\prime}$ and $y^{\prime}-$
$^{\ast}B_{m}$ must intersect.

Now for any $n\in\mathbb{N}$, let
\[
S_{n}=\left\{  x\in\,^{\ast}\!A\cap[-H,H]^{d}:\frac{|(x+[-n,n]^{d}%
)\cap\,^{\ast}\!A)|}{\left(  2n+1\right)  ^{d}}>\frac{\epsilon}{2}\right\}
\,
\]
and
\[
S=\bigcap_{N=1}^{\infty}\bigcup_{n=N}^{\infty}S_{n}.
\]
Similarly define
\[
T_{n}=\left\{  x\in\,^{\ast}\!A\cap[-H,H]^{d}:\frac{|(x+[-n,n]^{d}%
)\cap\,^{\ast}\!A)|}{\left(  2n+1\right)  ^{d}}<\frac{2\epsilon}{3}\right\}
\,
\]
and
\[
T=\bigcup_{N=1}^{\infty}\bigcap_{n=N}^{\infty}T_{n}.
\]

It is now easy to verify the following \textbf{Facts}:

\begin{enumerate}
\item \label{itm1: first} $\!$ ${}^{\ast}\!{A}\left\backslash S\right.
\subseteq T$.

\item \label{itm1: second} If $x\in\,[-H,H]^{d}$, then $x\in S$ implies
$\overline{\operatorname{d}}((^{\ast}\!A-x)\cap\mathbb{Z}^{d})\geqslant
\epsilon/2$ and $\overline{\operatorname{d}}((^{\ast}\!A-x)\cap\mathbb{Z}%
^{d})>\epsilon/2$ implies $x\in S$.

\item \label{itm1: third} If $x\in\,^{\ast}\!A\cap\,[-H,H]^{d}$, then $x\in T$
implies $\underline{\operatorname{d}}((^{\ast}\!A-x)\cap\mathbb{Z}%
^{d})\leqslant2\epsilon/3$ and $\overline{\operatorname{d}}((^{\ast}%
\!A-x)\cap\mathbb{Z}^{d})<2\epsilon/3$ implies $x\in T$.

\item \label{itm1: fourth} If $x,y\in\,^{\ast}\!A\cap[-H,H]^{d}$ and
$x-y\in\mathbb{Z}^{d}$, then $x\in S$ if and only if $y\in S$ and $x\in T$ if
and only if $y\in T$.
\end{enumerate}

We show that $\mu_{[-H,H]^{d}}(T)\leqslant2\epsilon/3$. \ Suppose that
$\mu_{[-H,H]^{d}}(T)=\gamma>2\epsilon/3$. \ By the Birkhoff Ergodic Theorem
the asymptotic density of $T-x$ exists for almost all $x$ (see
e.g.\ \cite{jinintro} pages 23 and 24 for more details on a similar argument
using this theorem). \ So there exists an $x\in T$ such that $d((T-x)\cap
\mathbb{Z}^{d})\geq\gamma$. \ By Fact \ref{itm1: fourth}, we have that
\[
(^{\ast}\!A-x)\cap\mathbb{Z}^{d}=(T-x)\cap\mathbb{Z}^{d}.
\]
Therefore, $\underline{\operatorname{d}}((^{\ast}\!A-x)\cap\mathbb{Z}%
)=\gamma>2\epsilon/3$, which contradicts that $x\in T$ by Fact
\ref{itm1: third}.

By Fact \ref{itm1: first} $\mu_{[-H,H]^{d}}(S)\geqslant\alpha-2\epsilon
/3>\alpha-\epsilon$. Let $t\in[-H,H]^{d}$ be such that $\left\Vert
t\right\Vert /H\approx0$ and $d(^{\ast}\!B_{m}\cap(t+\mathbb{Z}^{d}%
))>1-\epsilon$. \ By (\ref{pigeonhole})--(\ref{pigeonhole2}), we have that%

\begin{align*}
& ({}^{\ast}\!(A+B)+[-m,m]^{d})\cap[-H,H]^{d}\\
& =(^{\ast}\!A+\,^{\ast}\!B_{m})\cap[-H,H]^{d}\supseteq(t+S+\mathbb{Z}%
^{d})\cap[-H,H]^{d}.
\end{align*}
Consequently for any $k\in\mathbb{N}$, the measure of
\[
\left(  \{x\in[-H,H]^{d}:x+[0,k]^{d} \subseteq\,^{\ast}(A+B)+[-m,m]^{d}%
\}\right)
\]
is at least the measure of $S+\mathbb{Z}^{d}$, which is greater than or equal
to $\alpha-\epsilon$.

This completes the proof.
\end{proof}

Example \ref{ex:big example} shows (in one dimension) that we may not replace
$\alpha-\epsilon$ with $\alpha$ in the conclusion of the previous theorem.

\begin{example}
\label{ex:big example}Sets $A,B\subseteq\mathbb{N}$ can be constructed so that
they satisfy that $\underline{\operatorname{d}}(A)=1/2$, $\operatorname{BD}%
(B)\geqslant8/9$, and for any $m\in\mathbb{N}$ there exists $k\in\mathbb{N}$
such that
\[
\underline{\operatorname{d}}(\{x\in\mathbb{N}:x+[0,k]\subseteq
A+B+[0,m]\})<\frac{1}{2}.
\]

\end{example}

\begin{proof}
We construct $A$ first. Let $f(n,p)=10^{(n^{2}+p)^{2}}$. Notice that
$f(n,p)<f(n+1,0)$ for any $n$ and $p\leqslant n$, and
$f(N,p)/(f(N,p+1)-f(N,p))\approx0$ and $f(N,N)/(f(N+1,0)-f(N,N))\approx0$ for
any hyperfinite $N$ and $p\leqslant N$.

For each $p\in\mathbb{N}$ let $r_{p}=\frac{10^{p}-2}{2(10^{p}-1)}$. Notice
that $r_{p}<\frac{1}{2}$ when $p$ is finite and $r_{p}\rightarrow\frac{1}{2}$
as $p\rightarrow\infty$. The number $r_{p}$ satisfies that $r_{p}+\frac
{1}{10^{p}}(1-r_{p})=\frac{1}{2}$.

For each $n\in\mathbb{N}$ and each $p=1,2,\ldots n-1$, let
\[
C_{n,p}=C_{n,p}^{\prime}\cup C_{n,p}^{\prime\prime}%
\]%
\[
\mbox{where }\,C_{n,p}^{\prime}=[f(n,p),f(n,p)+\left\lfloor r_{p}%
(f(n,p+1)-f(n,p))\right\rfloor ]
\]%
\[
\mbox{and }\,C_{n,p}^{\prime\prime}=\left(  10^{p}\mathbb{N}\right)
\cap\lbrack f(n,p)+\left\lfloor r_{p}(f(n,p+1)-f(n,p))\right\rfloor
,f(n,p+1)],
\]
i.e.\ $C_{n,p}\subseteq\lbrack f(n,p),f(n,p+1)]$ is the union of an interval
of length 
\[
\left\lfloor r_{p}(f(n,p+1)-f(n,p))\right\rfloor 
\]
and an
arithmetic progression of difference $10^{p}$ and length
\[
\left\lfloor \frac{1}{10^{p}}(1-r_{p})(f(n,p+1)-f(n,p))\right\rfloor .
\]
Let $C_{n,n}=2[f(n,n),f(n+1,0)]$. Let
\[
A=\bigcup_{n=1}^{\infty}\bigcup_{p=0}^{n}C_{n,p}.
\]
Clearly, $\underline{\operatorname{d}}(A)=1/2$.

Now we construct $B$.

For each $p\in\mathbb{N}$ let $E_{p}=\bigcup_{k=1}^{\infty}\left(
k10^{2p}-[1,10^{p}]\right)  $, $D_{p}=\mathbb{N}\setminus\bigcup_{k=1}^p E_k$, and
$D=\bigcap_{p=1}^{\infty}D_{p}$. Let $F_{n}=D\cap\lbrack0,10^{2n}-1]$. We list
the following \textbf{Facts}:

\begin{enumerate}
\item $\!$ Every interval of length $10^{2p}$ contains a gap of $D$ with
length at least $10^{p}$.

\item Also $d(D)\geqslant1-\sum_{p=1}^{\infty}\frac{1}{10^{p}}=8/9$.

\item $F_{n}=D_{n}\cap\lbrack0,10^{2n}-1]$.

\item For any $p^{\prime}\leqslant p$
\[
k10^{2p}+D_{p^{\prime}}\cap\lbrack0,10^{2p}-1]=D_{p^{\prime}}\cap\lbrack
k10^{2p},(k+1)10^{2p}-1].
\]

\item \label{itm2: fifth} For any $p^{\prime}<p^{\prime\prime}\leqslant p$,
\[
k10^{2p}+10^{2p^{\prime\prime}}+F_{p^{\prime}}\subseteq10^{2p}\mathbb{N}%
+F_{p}.
\]

\item For any $n\geq p$,
\[
\lbrack k10^{2p},(k+1)10^{2p}-1]\cap D_{n}\subseteq k10^{2p}+F_{p}.
\]

\item $10^{2p}\mathbb{N}+D=10^{2p}\mathbb{N}+F_{p}$.
\end{enumerate}

Let
\[
B=\bigcup_{n=2}^{\infty}(f(n,0)+F_{n}).
\]

Clearly, $B\!D(B)\geqslant d(D)\geqslant8/9$. For each hyperfinite integer $N$
and $1\leqslant p<N$, let $u=\max\,^{\ast}\!B\cap\lbrack0,f(N,p)]$. We have
that $u/f(N,p)\approx0$. The set $B$ is a union of $F_{n}$'s translated by
rapidly increasing powers of $10$. It is important to observe that
\[
10^{2n}\mathbb{N}+B\subseteq10^{2n}\mathbb{N}+F_{n}.
\]

This is true because by Fact \ref{itm2: fifth}, we have that

\begin{itemize}
\item if $f(n^{\prime},0)\geq10^{n}$, then
\[
k10^{2n}+f(n^{\prime},0)+F_{n^{\prime}}=k^{\prime}10^{2n}+F_{n^{\prime}%
}\subseteq10^{2n}\mathbb{N}+F_{n};
\]

\item if $f(n^{\prime},0)=10^{(n^{\prime})^{4}}<10^{2n}$, then
\[
k10^{2n}+f(n^{\prime},0)+F_{n^{\prime}}=k^{\prime}10^{(n^{\prime})^{4}%
}+F_{n^{\prime}}=k^{\prime\prime}10^{2(n^{\prime}+1)}+F_{n^{\prime}}%
\subseteq10^{2n}\mathbb{N}+F_{n}.
\]

\end{itemize}

Now we show that the sets $A$ and $B$ are what we want.

Given any $m\in\mathbb{N}$, choose a $p\in\mathbb{N}$ sufficiently large so
that $10^{p}>2m$. Let $H=f(N,2p+1)$ and $u=\max B\cap\lbrack1,H]$. Then 
\[
(f(N,2p+1)-f(N,2p))/H\approx1
\]
and
$u/H\approx0$. Therefore
\[
(^{\ast}\!A+\,^{\ast}\!B+[0,m])\cap\lbrack0,f(N,2p+1)]
\]
is contained in 
\begin{align*}
&  \lbrack0,f(N,2p)+\left\lfloor r_{2p}
(f(N,2p+1)-f(N,2p))\right\rfloor +u+m] \\
& \cup (C_{N,2p}^{\prime\prime}+\,^{\ast }\!B\cap\lbrack0,u]+[0,m]).
\end{align*}

Note that $(C_{N,2p}^{\prime\prime}+\,^{\ast}\!B\cap\lbrack0,u])\cap
\lbrack1,H]\subseteq(C_{N,2p}^{\prime\prime}+F_{2p})\cap\lbrack1,H]$ and every
interval of length $10^{2p}$ contains a gap of length $10^{p}$ in
$(C_{N,2p}^{\prime\prime}+F_{2p})\cap\lbrack1,H]$. Since $10^{p}>2m$, every
interval of length $10^{2p}$ in 
\[
[f(N,2p)+\left\lfloor r_{2p}(f(N,2p+1)-f(N,2p))\right\rfloor +u+m,f(N,2p+1)]
\]
 is not entirely in ${}^{\ast}\!(A+B)+[0,m]$. So we can choose $k=10^{2p}$ so that
\begin{align*}
\lefteqn{\{x\in\lbrack1,H]:x+[0,k]\subseteq\,{}^{\ast}\!(A+B)+[0,m]\}}\\
&  \subseteq\lbrack1,f(N,2p)+\left\lfloor r_{2p}%
(f(N,2p+1)-f(N,2p))\right\rfloor +u].
\end{align*}

Hence
\begin{align*}
\lefteqn{\mu_{\lbrack1,H]}(\{x\in\lbrack1,H]:x+[0,k]\subseteq\,{}^{\ast
}\!(A+B)+[0,m]\})}\\
&  \approx\frac{1}{H}(f(N,2p)+\left\lfloor r_{2p}%
(f(N,2p+1)-f(N,2p))+u)\right\rfloor \approx r_{2p}<\frac{1}{2}.
\end{align*}

\end{proof}

\section{Syndeticity for the sum of two sets of positive lower
density\label{Section: Syndeticity for the sum of two sets of positive lower density}%
}

In this section we focus only on the dimension 1 case, where the results from
Section \ref{Section: Lower syndeticity and sumsets} can be improved under the
assumption that both sets have positive lower density. \ The results use
Mann's Theorem about the additivity of Schnirelmann density \cite{Mann} and
thus do not generalize to $n$ dimensions in a straightforward way. \ For the
remainder of the section the dimension is 1 and the density functions are
defined on intervals of natural numbers starting at 1, as in the classical setting.

Mann's theorem asserts that if $A$ and $B$ are subsets of $\mathbb{N}$ such
that $\sigma(A)=\alpha$ and $\sigma(B)=\beta$, then \
\[
\sigma((A\cup\left\{  0\right\}  )+(B\cup\left\{  0\right\}  ))\geq
\min\{\alpha+\beta,1\}.\
\]
This guarantees that for any $n$ \
\[
\frac{\left\vert \left(  (A\cup\left\{  0\right\}  )+(B\cup\left\{  0\right\}
)\right)  \cap\lbrack1,n]\right\vert }{n}\geq\min\{\alpha+\beta,1\}.
\]
So the result can at once be thought of as pertaining to either infinite sets
or finite sets of natural numbers up to some $n$.

We first need the proposition below.

\begin{proposition}
\label{prop:existence of Schnirelmann sets}Let $A\subseteq\mathbb{N}$ be such
that $\underline{\operatorname{d}}(A)=\alpha>0.$ \ Then for any $H\in$
$^{\ast}\mathbb{N}\left\backslash \mathbb{N}\right.  $ and any $\epsilon>0$
there exists an internal $E\subseteq$ ${}^{\ast}\!{A}\cap\mathcal{D}_{{}%
^{\ast}\!{A}}\cap[1,H]$ such that $\sigma(E-e)\geq r-\epsilon$ for some $e\in$
$[1,H]$, with $e/H<\epsilon$.
\end{proposition}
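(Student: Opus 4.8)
The plan is to combine the Lebesgue density theorem (Theorem \ref{thm:LDTtheorem1}) with the classical ``pigeonhole/$\arg\min$'' trick behind Schnirelmann‑type results. Concretely I would: (i) show that $^{\ast}\!A$ and its set of density points $\mathcal{D}_{{}^{\ast}\!A}$ agree on $[1,H]$ up to a $\mu_{[1,H]}$‑null set; (ii) extract from $^{\ast}\!A\cap\mathcal{D}_{{}^{\ast}\!A}$ an \emph{internal} set $E$ that still carries a uniform lower counting‑density bound on every macroscopic initial segment of $[1,H]$; and (iii) translate $E$ by a small amount to turn that bound into a Schnirelmann‑density bound. Throughout I may assume $\epsilon<\alpha$ (otherwise $\alpha-\epsilon\le 0$ and $E=\varnothing$, $e=1$ work trivially), and I write $\alpha$ for the ``$r$'' in the statement.

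For (i), first record that $\mathcal{D}_{E}\subseteq E+\mathbb{Z}$ for every internal $E$: if $x\notin E+\mathbb{Z}$ then $(x+[-m,m])\cap(E+\mathbb{Z})=\varnothing$ for every finite $m$, so by overspill this persists for some infinite $m_{0}$ and hence for every infinite $\nu<m_{0}/2$, giving $d_{E}(x)=0$. Applying Theorem \ref{thm:LDTtheorem1} in its $[1,n]$‑normalized form to $E_{0}:={}^{\ast}\!A\cap[1,H]$ yields $\mu_{[1,H]}(\mathcal{D}_{E_{0}})=\mu_{[1,H]}(E_{0}+\mathbb{Z})$; together with $E_{0}\subseteq E_{0}+\mathbb{Z}$ and the containment just noted this forces $\mu_{[1,H]}(E_{0}\setminus\mathcal{D}_{E_{0}})=0$. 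Since $\mathcal{D}_{E_{0}}$ and $\mathcal{D}_{{}^{\ast}\!A}$ differ inside $[1,H]$ only within a finite distance of the endpoints, a null set, we get $\mu_{[1,H]}\bigl(({}^{\ast}\!A\setminus\mathcal{D}_{{}^{\ast}\!A})\cap[1,H]\bigr)=0$.

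For (ii), use outer regularity of the Loeb measure to pick an internal $G\supseteq({}^{\ast}\!A\setminus\mathcal{D}_{{}^{\ast}\!A})\cap[1,H]$ with $\mu_{[1,H]}(G)$ as small as desired, and set $E:=({}^{\ast}\!A\cap[1,H])\setminus G$, which is internal and contained in $^{\ast}\!A\cap\mathcal{D}_{{}^{\ast}\!A}\cap[1,H]$. Since $\underline{\operatorname{d}}(A)=\alpha$, transfer gives $|{}^{\ast}\!A\cap[1,K]|\ge(\alpha-\epsilon/4)K$ for all $K$ beyond some finite $N_{0}$; and because $|G\cap[1,K]|\le|G|$ with $\operatorname{st}(|G|/H)=\mu_{[1,H]}(G)$, choosing $\mu_{[1,H]}(G)$ small enough (depending only on $\epsilon$) yields $|G\cap[1,K]|\le(\epsilon/4)K$, hence $|E\cap[1,K]|\ge(\alpha-\epsilon/2)K$, for every $K\in[\lfloor\epsilon H/2\rfloor,H]$. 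For (iii) I would then let $e$ minimize the internal function $f(n):=|E\cap[1,n]|-(\alpha-\epsilon)n$ over $\{1,\dots,\lfloor\epsilon H/2\rfloor\}$; then $e/H<\epsilon$, and for $e<m\le\lfloor\epsilon H/2\rfloor$ one has $f(m)\ge f(e)$ by minimality, while for $m>\lfloor\epsilon H/2\rfloor$ the bound of (ii) gives $f(m)\ge(\epsilon/2)m$, an infinite quantity, whereas $f(e)\le f(1)\le1$; so $f(m)\ge f(e)$ for all $e<m\le H$, i.e. $|(E-e)\cap[1,n]|=|E\cap(e,e+n]|\ge(\alpha-\epsilon)n$ for every $1\le n\le H-e$, which is precisely $\sigma(E-e)\ge\alpha-\epsilon$ in the finite/hyperfinite sense in which Schnirelmann density enters Mann's theorem.

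I expect the genuine work to be step (i) together with the use of \emph{outer} regularity in step (ii): what is needed is not merely that $^{\ast}\!A\setminus\mathcal{D}_{{}^{\ast}\!A}$ is $\mu_{[1,H]}$‑null, but that it admits an internal cover whose counting density stays small on \emph{every} window $[1,K]$ of size comparable to $H$ — which is exactly why the null estimate must be taken relative to the full interval $[1,H]$ while the subsequent lower bound is only tested on macroscopic windows. The translation step (iii) is the standard Schnirelmann $\arg\min$ argument, and the remaining computations are routine bookkeeping.
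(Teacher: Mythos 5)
Your argument is correct, and it reaches the conclusion by a genuinely different implementation than the paper's, even though both proofs share the same skeleton (Theorem \ref{thm:LDTtheorem1} to see that almost every point of ${}^{\ast}\!A$ in $[1,H]$ is a density point, an internal approximation $E$ inside ${}^{\ast}\!A\cap\mathcal{D}_{{}^{\ast}\!A}$, and a translation by $e$ to convert a ``bound beyond a short prefix'' into a Schnirelmann bound). The paper approximates $\mathcal{D}_{{}^{\ast}\!A}\cap{}^{\ast}\!A$ \emph{from below} by internal sets on the dyadic blocks $[H/2^{n},H/2^{n-1}]$, $n\le 2m$, and derives the macroscopic counting bound from Loeb-measure estimates $\mu_{[1,H]}(D_{A}\cap[1,K])\ge\alpha K/H$ for all infinite $K$; the short bad prefix is controlled by the dyadic depth ($2^{-m}<\epsilon$), and $e$ is taken to be one more than the last point where the counting bound fails. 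You instead cover the \emph{exceptional null set} $({}^{\ast}\!A\setminus\mathcal{D}_{{}^{\ast}\!A})\cap[1,H]$ by a small internal $G$ (outer regularity of Loeb measure), take $E=({}^{\ast}\!A\cap[1,H])\setminus G$, and get the counting bound on macroscopic windows directly by transferring $\underline{\operatorname{d}}(A)=\alpha$ and using $|G\cap[1,K]|\le|G|$; your $e$ is the $\arg\min$ of $f(n)=|E\cap[1,n]|-(\alpha-\epsilon)n$ over $[1,\lfloor\epsilon H/2\rfloor]$, which is the same normalization trick in equivalent form. Your route avoids the dyadic bookkeeping and the measure-to-counting conversion on each block, at the (routine) cost of invoking outer regularity and transfer; the paper's route stays entirely with inner approximations of the density-point set. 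One cosmetic wrinkle: in your argument that $\mathcal{D}_{E}\subseteq E+\mathbb{Z}$, overspill cannot be applied literally to the statement $(x+[-m,m])\cap(E+\mathbb{Z})=\varnothing$, since $E+\mathbb{Z}$ is external; apply it to the equivalent internal statement $(x+[-m,m])\cap E=\varnothing$ and then absorb the finite translates exactly as your ``$\nu<m_{0}/2$'' step already does, so nothing is lost.
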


Here by $\sigma(E-e)$ we mean the\ Schnirelmann density of the internal set
$E-e$ on $[1,H-e]$, i.e.\ $\inf_{h\in H-e}\frac{\left\vert (E-e)\cap
\lbrack1,h]\right\vert }{h}.$

\begin{proof}
We will write $D_{A}$ for ${}^{\ast}\!{A}\cap\mathcal{D}_{{}^{\ast}\!{A}}.$
Since $D_{A}$ is Loeb measurable on any interval, and Loeb measurable sets are
approximable from below by internal sets, for each $n\in\mathbb{N}$ there
exists an internal set $E_{n}$ such that \
\[
E_{n}\subseteq D_{A}\cap\lbrack(H/2^{n}),(H/2^{n-1})]
\]
and
\[
\mu_{[1,H]}(E_{n})>(1-\epsilon/4)\mu_{[1,H]}(D_{A}\cap\lbrack(H/2^{n}%
),(H/2^{n-1})]).
\]

Let $m\in\mathbb{N}$ be such that \
\[
1/2^{m}<\epsilon\text{ and let }E=\bigcup_{n=1}^{2m}(E_{n}).
\]
\ Then $E$ is internal and $E\subseteq D_{A}\cap[1,H]=$ $^{\ast}%
A\cap\mathcal{D}_{{}^{\ast}\!{A}}$\ $\cap[1,H].$

By Theorem \ref{thm:LDTtheorem1} and Proposition \ref{prop:nonstequivalents}
\[
\mu_{\lbrack1,H]}(D_{A}\cap\lbrack1,K])\geq\alpha(K/H)\text{ for all
}\mathbb{N<}K<H.
\]

Note that for any $n<m$, if $x\in\lbrack(H/2^{n}),(H/2^{n-1})]$ then
\begin{align*}
\mu_{[1,H]}(E\cap\lbrack(H/2^{n}),x])  &  \geq\mu_{_{[1,H]}}(D_{A}\cap
\lbrack(H/2^{n}),x])-(\epsilon/4)(1/2^{n-1}-1/2^{n})\\
&  \geq\mu_{_{[1,H]}}(D_{A}\cap\lbrack(H/2^{n}),x])-(\epsilon/4)(x/H),
\end{align*}
and
\[
\mu_{[1,H]}(E\cap\lbrack1,(H/2^{n})])>(1-\epsilon/4)\mu_{\lbrack1,H]}%
(D_{A}\cap\lbrack1,(H/2^{n})])-1/2^{2m}.
\]

We now have: \
\begin{align*}
&  \mu_{[1,H]}(E\cap\lbrack1,x])\\
=  & \mu_{[1,H]}(E\cap\lbrack1,(H/2^{n})])+\mu_{[1,H]}(E\cap\lbrack
(H/2^{n}),x])\\
>  & (1-\epsilon/4)\mu_{[1,H]}(D_{A}\cap\lbrack1,(H/2^{n})])-1/2^{2m}+\\
&  +\mu_{_{[1,H]}}(D_{A}\cap\lbrack(H/2^{n}),x])-(\epsilon/4)(x/H)\\
\geq & \mu_{[1,H]}(D_{A}\cap\lbrack1,x])-\epsilon/4(\mu_{_{[1,H]}}(D_{A}%
\cap\lbrack(H/2^{n}),x])+x/H)-\epsilon/2^{m}\\
\geq & \alpha x/H-\epsilon/4(x/H+x/H)-(\epsilon/2)x/H\\
\geq & \alpha x/H-\epsilon x/H.
\end{align*}

This means that the largest element $u$ in $[1,H]$ such that
\[
\left\vert E\cap\lbrack1,u]\right\vert <(\alpha-\epsilon)u
\]

is less than $(1/2^{m})H$. \ Let $e=u+1$. \ We note that $e$ must be an
element of $E$, and that for all $e<x<H$
\[
\left\vert E\cap\lbrack e+1,x]\right\vert \geq(\alpha-\epsilon)(x-e)
\]

by the maximality of $u$. \ Thus $\sigma(E-e)\geq\alpha-\epsilon$ on
$[1,H-e]$, and all the statements in the conclusion are satisfied.
\end{proof}

\begin{theorem}
\label{thm:lowerdensitysumresult} Let $A$ and $B$ be subsets of $\mathbb{N}$
with the property that $\underline{\operatorname{d}}(A)=\alpha>0$, and
$\underline{\operatorname{d}}(B)=\beta>0$. Then $A+B$ is strongly upper
syndetic of level $\min\{\alpha+\beta,1\}$. \ 
\end{theorem}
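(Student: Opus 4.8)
The plan is to mimic the proof of Theorem \ref{thm:lower density strongly upper syndetic theorem}, but replacing the single density point of $^{\ast}B$ coming from $\operatorname{BD}(B)>0$ with a full Schnirelmann-type configuration on both sides, so that Mann's Theorem can be invoked. Fix an arbitrary sequence $S\subseteq\mathbb{N}$ going to infinity, fix $\epsilon>0$, and set $H=s_I$ for $I\in{}^{\ast}\mathbb{N}\setminus\mathbb{N}$ where $(s_n)$ is an enumeration of $S$ (as in the earlier proof, this choice of $H$ is what lets us transfer back to a $\limsup$ along $S$ at the end). Apply Proposition \ref{prop:existence of Schnirelmann sets} to $A$ and to $B$ separately: this produces internal sets $E_A\subseteq{}^{\ast}\!A\cap\mathcal{D}_{{}^{\ast}\!A}\cap[1,H]$ and $E_B\subseteq{}^{\ast}\!B\cap\mathcal{D}_{{}^{\ast}\!B}\cap[1,H]$ together with $a,b\in[1,H]$ with $a/H,b/H<\epsilon$ such that $\sigma(E_A-a)\geq\alpha-\epsilon$ on $[1,H-a]$ and $\sigma(E_B-b)\geq\beta-\epsilon$ on $[1,H-b]$.

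Next I would feed these two Schnirelmann sets into (the transfer of) Mann's Theorem. Working on the hyperfinite interval $[1,H']$ where $H'$ is roughly $H-a-b$, transfer gives
\[
\sigma\bigl((E_A-a)\cup\{0\}) + ((E_B-b)\cup\{0\})\bigr)\geq\min\{\alpha+\beta-2\epsilon,1\},
\]
so that the internal set $(E_A-a)+(E_B-b)$ — ignoring the harmless $\{0\}$ adjustments, which only shift things by the single points $a$, $b$ — has counting measure at least $\min\{\alpha+\beta,1\}-2\epsilon$ on every initial segment of $[1,H']$, and in particular $\mu_{[1,H]}\bigl((E_A+E_B) - (a+b)\bigr)\geq\min\{\alpha+\beta,1\}-2\epsilon$. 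The key point is that every element of $E_A+E_B$ is a \emph{sum of two points of density}: if $x\in E_A$ and $y\in E_B$ then $d_{{}^{\ast}\!A}(x)=d_{{}^{\ast}\!B}(y)=1$, so by Theorem \ref{thm:density plus density is syndetic}, $x+y$ is a point of syndeticity of $^{\ast}\!A+{}^{\ast}\!B={}^{\ast}(A+B)$. Hence $E_A+E_B\subseteq\mathcal{S}_{{}^{\ast}(A+B)}$, and therefore
\[
\mu_{[1,H]}\bigl(\mathcal{S}_{{}^{\ast}(A+B)}\bigr)\geq\min\{\alpha+\beta,1\}-2\epsilon.
\]
(Here I also use that $a+b$ is infinitesimal relative to $H$, so translating the measured set back by $a+b$ keeps it essentially inside $[1,H]$ and does not change its Loeb measure.)

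Finally, I would apply Proposition \ref{prop:syndeticimpliesresult} to the internal set $E={}^{\ast}(A+B)\cap[1,H]$: since $\mu_{[1,H]}(\mathcal{S}_E)\geq\min\{\alpha+\beta,1\}-2\epsilon>0$, there is a standard $m\in\mathbb{N}$ so that for all standard $k$,
\[
\mu_{[1,H]}\bigl(\{z\in[1,H]:z+[-k,k]\subseteq{}^{\ast}(A+B)+[-m,m]\}\bigr)\geq\min\{\alpha+\beta,1\}-3\epsilon.
\]
Now transfer exactly as at the end of the proof of Theorem \ref{thm:lower density strongly upper syndetic theorem}: because $H=s_I$, for this fixed $m$ and any $k\in\mathbb{N}$ there are arbitrarily large $j$ with the corresponding standard ratio over $[1,s_j]$ at least $\min\{\alpha+\beta,1\}-4\epsilon$, which gives $\limsup_{n\in S}(\cdots)\geq\min\{\alpha+\beta,1\}-4\epsilon$. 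Since $\epsilon>0$ was arbitrary and $m$ can be chosen for each $\epsilon$, a short diagonalization (or simply re-examining that $m$ may be taken uniformly once $\epsilon$ is small enough, exactly as in the cited proof) yields strong upper syndeticity of level $\min\{\alpha+\beta,1\}$. The main obstacle I anticipate is the bookkeeping in the Mann's-theorem step: one must carefully justify transferring a theorem about Schnirelmann density of finite sets to the internal setting on $[1,H']$, handle the $\cup\{0\}$ modifications and the shifts by $a$ and $b$ without losing more than $O(\epsilon)$ in density, and confirm that the resulting sumset genuinely lands in $\mathcal{S}_{{}^{\ast}(A+B)}$ rather than in $\mathcal{S}_{{}^{\ast}(A+B)}+(\text{something infinite})$; the density-point ingredient (Theorem \ref{thm:density plus density is syndetic}) is what makes this last inclusion work, and keeping the translation $a+b$ infinitesimal relative to $H$ is what keeps the measures honest.
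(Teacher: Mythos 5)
Your first half matches the paper's own argument: Proposition \ref{prop:existence of Schnirelmann sets} on each of $A$ and $B$, Mann's Theorem by transfer on the shifted internal sets, the observation that sums of density points are syndeticity points (Theorem \ref{thm:density plus density is syndetic}), and the bookkeeping with the infinitesimal shifts $a,b$ are all exactly the intended ingredients. The genuine gap is in your endgame. What you obtain is: for each $\epsilon>0$ there is an $m_\epsilon$ such that for all $k$ the $\limsup$ along $S$ is at least $\min\{\alpha+\beta,1\}-4\epsilon$. But the definition of strongly upper syndetic of level $\alpha+\beta$ demands a \emph{single} $m$ (for the given sequence $S$) with $\limsup\geq\alpha+\beta$ for every $k$, with no epsilon loss. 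Your "short diagonalization" does not produce such an $m$, because the $m_\epsilon$ coming out of Proposition \ref{prop:syndeticimpliesresult} may grow without bound as $\epsilon\to0$; Example \ref{ex:we need the epsilon} shows the $\epsilon$-loss in that proposition (with its $m$ depending on $\epsilon$) is genuinely unavoidable, so there is no reason "$m$ may be taken uniformly once $\epsilon$ is small enough." In effect you have proved that $A+B$ is strongly upper syndetic of level $\alpha+\beta-\epsilon$ for every $\epsilon>0$, which is weaker than the stated theorem.

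The paper closes this gap in two steps you are missing. First, instead of fixing one $\epsilon$, it runs your construction for every $n$ against the \emph{same} $H=s_I$, producing $E_{A,n},E_{B,n}$ with errors $1/n$; since all the sumsets $E_{A,n}+E_{B,n}$ lie in the single measurable set ${}^{\ast}(A+B)\cap\mathcal{S}_{{}^{\ast}(A+B)}$, letting $n\to\infty$ gives the exact bound $\mu_{[1,H]}({}^{\ast}(A+B)\cap\mathcal{S}_{{}^{\ast}(A+B)})\geq\alpha+\beta$, with no epsilon. Second, it splits into cases: if $\mu_{[1,H]}(\mathcal{S}_{{}^{\ast}(A+B)})>\alpha+\beta$ strictly, then Proposition \ref{prop:syndeticimpliesresult} (applied with $\epsilon$ smaller than the surplus) already yields level $\alpha+\beta$; if instead $\mu_{[1,H]}(\mathcal{S}_{{}^{\ast}(A+B)})=\alpha+\beta$, then the sandwich
\[
\alpha+\beta\leq\mu_{[1,H]}({}^{\ast}(A+B)\cap\mathcal{S}_{{}^{\ast}(A+B)})\leq\mu_{[1,H]}\bigl(({}^{\ast}(A+B)\cap\mathcal{S}_{{}^{\ast}(A+B)})+\mathbb{Z}\bigr)\leq\mu_{[1,H]}(\mathcal{S}_{{}^{\ast}(A+B)})=\alpha+\beta
\]
shows the set $T={}^{\ast}(A+B)\cap\mathcal{S}_{{}^{\ast}(A+B)}$ satisfies $\mu_{[1,H]}(T+\mathbb{Z})=\mu_{[1,H]}(T)$, so Proposition \ref{prop:unchanged measure proposition} applies and delivers the conclusion with $m=0$, again with no epsilon. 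You need this dichotomy (or some substitute providing uniformity of $m$ as the losses tend to zero) to reach level $\min\{\alpha+\beta,1\}$ rather than every level strictly below it.
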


\begin{proof}
If $\alpha+\beta>1$ then $A+B$ contains all but finitely many positive
integers, hence the conclusion holds trivially with $m=0$. \ So, we suppose
that $\alpha+\beta\leq1$.

Let $S\subseteq\mathbb{N}$ be any sequence going to infinity. \ Let $H=s_{I}$,
where $I\in$ $^{\ast}\mathbb{N}\left\backslash \mathbb{N}\right.  $. \ 

By transfer (as in the proof of Theorem
\ref{thm:lower density strongly upper syndetic theorem}) it suffices to show
that there exists $m\in\mathbb{N}$ such that for all $k\in\mathbb{N}$
\[
\mu_{[1,H]}\left(  \left\{  z\in[1,H]:z+[-k,k]\subseteq\text{ }{}^{\ast
}\!(A+B)+[-m,m]\right\}  \right)  \geq\alpha+\beta.
\]

By Proposition \ref{prop:existence of Schnirelmann sets}, for each
$n\in\mathbb{N}$ there exists an internal set $E_{A,n}$ and $a_{n}\in E_{A,n}$
such that

\begin{itemize}
\item $\sigma(E_{A,n}-a_{n})\geq\alpha-1/n$ on $[1,H-a_{n}]$,

\item $E_{A,n}\subseteq\text{ }{}^{\ast}\!{A}\cap\mathcal{D}_{{}^{\ast}\!{A}%
}\cap[1,H]$, and

\item $a_{n}/H<1/n$.
\end{itemize}

Similarly, for each $n\in\mathbb{N}$ there exists $E_{B,n}$ and $b_{n}\in
E_{B,n}$ such that

\begin{itemize}
\item $\sigma(E_{B,n}-b_{n})\geq\beta-1/n$ on $[1,H-b_{n}]$,

\item $E_{B,n}\subseteq{}^{\ast}B\cap\mathcal{D}_{^{\ast}B}\cap[1,H]$, and

\item $b_{n}/H<1/n$.
\end{itemize}

By Mann's Theorem,
\[
\sigma(E_{A,n}-a_{n}+E_{B,n}-b_{n})\geq\alpha+\beta-2/n\text{ on }%
[1,H-(a_{n}+b_{n})],
\]
i.e.\
\[
\frac{\left\vert (E_{A,n}-a_{n}+E_{B,n}-b_{n})\cap\text{ }[1,H-(a_{n}%
+b_{n})]\right\vert }{H-(a_{n}+b_{n})}\geq\alpha+\beta-2/n\text{.}%
\]
This implies that
\[
\mu_{\lbrack1,H]}\left(  E_{A,n}-a_{n}+E_{B,n}-b_{n})\cap\lbrack
1,H-(a_{n}+b_{n})]\right)  \geq(\alpha+\beta-2/n)(1-2/n).
\]

Thus
\[
\mu_{[1,H]}(E_{A,n}+E_{B,n})\geq(\alpha+\beta-2/n)(1-2/n) \geq\alpha
+\beta-4/n.
\]

Since each $E_{A,n}$ is in ${}^{\ast}\!{A}\cap\mathcal{D}_{{}^{\ast}\!{A}}$
and each $E_{B,n}$ is in ${}^{\ast}B\cap\mathcal{D}_{{}^{\ast}B}$, by Theorem
\ref{thm:density plus density is syndetic} we know that every $E_{A,n}%
+E_{B,n}$ is contained in ${}^{\ast}(A+B)\cap\mathcal{S}_{{}^{\ast}\!(A+B)}$,
so that
\[
\mu_{[1,H]}({}^{\ast}(A+B)\cap\mathcal{S}_{{}^{\ast}\!(A+B)})\geq\alpha
+\beta.
\]

Now, if
\[
\mu_{[1,H]}(\mathcal{S}_{{}^{\ast}\!(A+B)})>\alpha+\beta
\]
then the result follows by Proposition \ref{prop:syndeticimpliesresult}.
\ \ If, on the other hand,
\[
\mu_{[1,H]}(\mathcal{S}_{{}^{\ast}\!(A+B)})=\alpha+\beta
\]
then, since $\mathcal{S}_{{}^{\ast}\!(A+B)}=\mathcal{S}_{{}^{\ast}%
\!(A+B)}+\mathbb{Z}$, it must be that
\begin{align*}
\alpha+\beta &  \leq\mu_{[1,H]}({}^{\ast}(A+B)\cap\mathcal{S}_{{}^{\ast
}\!(A+B)})\leq\mu_{[1,H]}(({}^{\ast}(A+B)\cap\mathcal{S}_{{}^{\ast}%
\!(A+B)})+\mathbb{Z})\\
&  \leq\mu_{[1,H]}(\mathcal{S}_{{}^{\ast}\!(A+B)}+\mathbb{Z})=\mu
_{[1,H]}(\mathcal{S}_{{}^{\ast}\!(A+B)})=\alpha+\beta
\end{align*}
Thus, the set ${}^{\ast}(A+B)\cap\mathcal{S}_{{}^{\ast}\!(A+B)}$ satisfies the
hypotheses of the set $S$ in Proposition
\ref{prop:unchanged measure proposition} (in one dimension).\ \ This implies
that for each standard $k\in\mathbb{N}$
\[
\mu_{[1,H]}(\{x\in\text{ }{}^{\ast}(A+B):x+[-k,k]\subseteq{}^{\ast
}(A+B)\}=\alpha+\beta,
\]
and the result follows with $m=0$.
\end{proof}

\textbf{Question: }Under the same hypotheses as in the theorem above, can we
conclude that for any $\epsilon>0$ the sumset $A+B$ is lower syndetic of level
$\min\{\alpha+\beta-\epsilon,1\}$?

Currently the strongest conclusion that can be made involving lower density is
the result below.

\begin{theorem}
Let $A$ and $B$ be subsets of $\mathbb{N}$ with the property that
$\underline{\operatorname{d}}(A)=\alpha>0$, and $\underline{\operatorname{d}%
}(B)=\beta>0$. \ Then for any\ $\epsilon>0$ and any increasing function
$f:\mathbb{N\rightarrow N}$, there exists $m_{f}\in\mathbb{N}$ such that
\[
\underline{\operatorname{d}}(\{n\in\mathbb{N}:\text{ }\exists\text{ }%
m<m_{f}\text{, }n+[-f(m),f(m)]\subseteq A+B+[-m,m]\})
\]
is at least $\min\{\alpha+\beta-\epsilon,1\}$.
\end{theorem}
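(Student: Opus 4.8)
The plan is to recast the conclusion as a nonstandard statement about Loeb measure, run the Schnirelmann--Mann machinery of Theorem~\ref{thm:lowerdensitysumresult} to control one scale at a time, and then patch these together with a countable saturation argument to obtain a \emph{single} level $m_f$ good at all scales. First I would dispose of the case $\alpha+\beta>1$: a pigeonhole count gives $|A\cap[1,n]|+|B\cap[1,n]|>n$ for all large $n$, so $A\cap[1,n]$ meets $\{n+1-b:b\in B\cap[1,n]\}$ and hence $n+1\in A+B$; thus $A+B$ is cofinite and the statement holds trivially (take $m_f=2$, $m=1$, using $f(1)\in\mathbb{N}$). So assume $\alpha+\beta\le1$, and (shrinking $\epsilon$) that $0<\epsilon<\alpha+\beta$, so that $\min\{\alpha+\beta-\epsilon,1\}=\alpha+\beta-\epsilon$.

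For $m\in\mathbb{N}$ put $X_m=\{n\in\mathbb{N}:\exists\,i<m,\ n+[-f(i),f(i)]\subseteq A+B+[-i,i]\}$, so $X_1\subseteq X_2\subseteq\cdots$. By the nonstandard characterization of lower asymptotic density (the $[1,H]$-analogue of Proposition~\ref{prop:nonstequivalents}(2)), it suffices to produce a standard $m_f$ with $\mu_{[1,H]}({}^{\ast}X_{m_f})\ge\alpha+\beta-\epsilon$ for \emph{every} $H\in{}^{\ast}\mathbb{N}\setminus\mathbb{N}$; note that for standard $m$ and standard $f$ the set ${}^{\ast}X_m=\{n\in[1,H]:\exists\,i<m,\ n+[-f(i),f(i)]\subseteq{}^{\ast}(A+B)+[-i,i]\}$ is internal.

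Next I would prove, for an \emph{arbitrary} fixed $H\in{}^{\ast}\mathbb{N}\setminus\mathbb{N}$, that $\sup_m\mu_{[1,H]}({}^{\ast}X_m)\ge\alpha+\beta$. Following the proof of Theorem~\ref{thm:lowerdensitysumresult} (now with $H$ arbitrary), for each $n\in\mathbb{N}$ Proposition~\ref{prop:existence of Schnirelmann sets} yields internal sets $E_{A,n}\subseteq{}^{\ast}A\cap\mathcal{D}_{{}^{\ast}A}\cap[1,H]$ and $E_{B,n}\subseteq{}^{\ast}B\cap\mathcal{D}_{{}^{\ast}B}\cap[1,H]$ and elements $a_n\in E_{A,n}$, $b_n\in E_{B,n}$ with $a_n/H,b_n/H<1/n$, $\sigma(E_{A,n}-a_n)\ge\alpha-1/n$, $\sigma(E_{B,n}-b_n)\ge\beta-1/n$; Mann's theorem (noting $0\in E_{A,n}-a_n$, $0\in E_{B,n}-b_n$) together with the infinitesimality of $(a_n+b_n)/H$ gives $\mu_{[1,H]}(E_{A,n}+E_{B,n})\ge\alpha+\beta-4/n$. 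Since every point of $E_{A,n}$ is a point of density of ${}^{\ast}A$ and every point of $E_{B,n}$ is one of ${}^{\ast}B$, Theorem~\ref{thm:density plus density is syndetic} gives $E_{A,n}+E_{B,n}\subseteq\mathcal{S}_{{}^{\ast}(A+B)}$, so $\mu_{[1,H]}(\mathcal{S}_{{}^{\ast}(A+B)})\ge\alpha+\beta$. Writing $\mathcal{S}_{{}^{\ast}(A+B)}=\bigcup_i\mathcal{S}^i$ with $\mathcal{S}^i=\{x:x+\mathbb{Z}\subseteq{}^{\ast}(A+B)+[-i,i]\}$ as in Proposition~\ref{prop:syndeticimpliesresult}, each $x\in\mathcal{S}^i$ satisfies $x+[-f(i),f(i)]\subseteq{}^{\ast}(A+B)+[-i,i]$ (as $f(i)\in\mathbb{N}$), hence $x\in{}^{\ast}X_{i+1}$; therefore $\bigcup_m{}^{\ast}X_m\supseteq\mathcal{S}_{{}^{\ast}(A+B)}$, and since the ${}^{\ast}X_m$ increase, countable additivity of the Loeb measure gives $\sup_m\mu_{[1,H]}({}^{\ast}X_m)=\mu_{[1,H]}\bigl(\bigcup_m{}^{\ast}X_m\bigr)\ge\alpha+\beta$.

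The main obstacle, and the place where genuinely more than Theorem~\ref{thm:lowerdensitysumresult} is needed, is passing from \textquotedblleft for each $H$ some finite level works\textquotedblright\ to \textquotedblleft one finite level $m_f$ works for all $H$\textquotedblright. I would argue by contradiction using countable saturation and the nestedness of the $X_m$: if $\underline{\operatorname{d}}(X_m)<\alpha+\beta-\epsilon$ for every $m\in\mathbb{N}$, then for all $m,L\in\mathbb{N}$ the internal set $C_{m,L}=\{n\in{}^{\ast}\mathbb{N}:n\ge L\text{ and }|{}^{\ast}X_m\cap[1,n]|\le(\alpha+\beta-\epsilon)n\}$ is nonempty (it contains arbitrarily large finite $n$), and because $X_m\subseteq X_M$ for $m\le M$ one has $\bigcap_{m\le M,\,L\le L_0}C_{m,L}=C_{M,L_0}\ne\varnothing$, so this countable family of internal sets has the finite intersection property. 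Any $H\in\bigcap_{m,L}C_{m,L}$ is then an infinite hypernatural with $\mu_{[1,H]}({}^{\ast}X_m)\le\alpha+\beta-\epsilon<\alpha+\beta$ for every $m$, contradicting the previous paragraph. Hence some standard $m_f$ satisfies $\underline{\operatorname{d}}(X_{m_f})\ge\alpha+\beta-\epsilon=\min\{\alpha+\beta-\epsilon,1\}$, as desired.
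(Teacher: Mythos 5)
Your proposal is correct and takes essentially the same route as the paper: for an arbitrary infinite $H$ you rerun the Schnirelmann/Mann and density-point machinery of Theorem \ref{thm:lowerdensitysumresult} to get Loeb measure at least $\alpha+\beta$ at some finite level, and then a compactness step (your countable-saturation family $C_{m,L}$, where the paper uses overspill) produces one standard $m_f$ good for all scales. Your observation that $\mathcal{S}_{{}^{\ast}(A+B)}\subseteq\bigcup_m{}^{\ast}X_m$ together with countable additivity is a tidy way to bypass the case split inherited from Theorem \ref{thm:lowerdensitysumresult}, but the substance of the argument is the same.
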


We note that here $m_{f}$ depends only on the function, but that $m$ may
depend on $n$.

\begin{proof}
As before, if $\alpha+\beta>1$ the result is immediate, so we assume that
$\alpha+\beta\leq1$ and suppose, for the sake of contradiction, that for some
$\epsilon>0$ no such $m_{f}$ exists. \ Then there exists $r<\alpha+\beta$ such
that for all $m_{0}\in\mathbb{N}$ there exist arbitrarily large $n\in
\mathbb{N}$ such that for all $m<m_{0}$ \
\[
\left\vert \{z\in\mathbb{[}1,n]:z+[-f(m),f(m)]\subseteq
A+B+[-m,m]\}\right\vert <rn.
\]
By overspill there exist $M,H\in$ $^{\ast}\mathbb{N}\left\backslash
\mathbb{N}\right.  $ such that for all $m<M$ \
\[
\left\vert \{z\in\mathbb{[}1,H]:z+[-f(m),f(m)]\subseteq{}^{\ast}%
(A+B)+[-m,m]\}\right\vert <rH,
\]
so that
\[
\mu_{[1,H]}\left(  \left\{  z\in\mathbb{[}1,H]:z+[-f(m),f(m)]\subseteq\text{
}{}^{\ast}(A+B)+[-m,m]\right\}  \right)  \leq r.
\]
But, as in the proof of the previous theorem, we know that for any fixed
$H\in$ $^{\ast}\mathbb{N}\left\backslash \mathbb{N}\right.  $ there exists
$m\in\mathbb{N}$ such that for all $k\in\mathbb{N}$
\[
\mu_{[1,H]}(\{z\in[1,H]:z+[-k,k]\subseteq{}^{\ast}(A+B)+[-m,m]\})\geq
\alpha+\beta,
\]
and this contradiction completes the proof.
\end{proof}

\section{A Lebesgue density theorem for nonstandard
cuts\label{Section: A Lebesgue density theorem for nonstandard cuts}}

The classical Lebesgue Density Theorem for $\mathbb{R}^{d}$ says that if $E$
is a Lebesgue measurable set in $\mathbb{R}^{d}$ then almost every point in
$E$ is a point of density of $E$, i.e.\ almost every $x\in E$ has the property
that
\[
\lim_{\epsilon\rightarrow0}\frac{\lambda(E\cap(x+(-\epsilon,\epsilon)^{d}%
))}{(2\epsilon)^{d}}=1.
\]
The goal of this section is to prove an analogue of the Lebesgue Density
Theorem for measures induced by arbitrary cuts in $^{\ast}\mathbb{N}$. \ Let
$H\in$ $^{\ast}\mathbb{N}\left\backslash \mathbb{N}\right.  $. \ A \emph{cut}
$U$ in $[1,H]$ is an initial segment of $[1,H]$ that is closed under addition.
\ Cuts in this context were introduced in \cite{KL}, and some of the
topological properties of the quotient space $[1,H]$ under the equivalence
relation $x\equiv y$ iff $\left\vert x-y\right\vert \in U$ were explored. \ 

For a given cut $U$ in $[1,H]$, we let $\mathcal{U=}(-U)\cup\{0\}\cup(U)$. \ A
$U-$\emph{monad of }$[-H,H]^{d}$ is a set of the form $x+\mathcal{U}^{d}$,
where $x\in[-H,H]^{d}$ and $x+\mathcal{U}^{d}\subseteq[-H,H]^{d}$. \ The main
result in this section is really about the behavior of Loeb measure on the
space of monads of various cuts, i.e.\ the quotient space under the projection
that sends $x$ to $x+\mathcal{U}^{d}$. \ For any $\mathbb{N<}K\leq H$ there is
a natural cut of all elements infinitesimal to $K$, given by $U_{K}%
=\bigcap_{i=1}^{\infty}[1,K/i]$. \ Loeb measure on the quotient space of
$[-K,K]^{d}$ for the cut $U_{K}$ is isomorphic to Lebesgue Measure on
$[-1,1]^{d}$ via the measure-preserving mapping that sends $x+\mathcal{U}^{d}$
to $\operatorname{st}(x/K)$. \ So, the fact that the Lebesgue Density Theorem
holds for such cuts is immediate from the fact that the result holds for
Lebesgue measure. \ Previous standard results were obtained by using the
density theorem in the space of monads of such $U_{K}$ in \cite{leth1},
\cite{leth2} and \cite{leth3}. \ In this section we show that there is an
analogous density theorem for every cut and in every finite dimension. \ The
standard results in this paper are based on the density theorem in the case
where that $U=\mathbb{N}$.

We begin with a standard combinatorial lemma.

\begin{lemma}
\label{lemma:approximate tilings dimension N}Suppose that $m\in\mathbb{N}$ and
$\left(  T_{i}\right)  _{i<n}$ is a collection of subsets of a finite set $X$
such that for every $x\in X$%
\[
1\leq\sum_{i<n}\chi_{T_{i}}(x)\leq m
\]
where $\chi_{T_{i}}$ denotes the characteristic function of $T_{i}$. If
$t\in\left(  0,1\right)  $ and $E\subset X$ is such that%
\[
\frac{\left\vert T_{i}\cap E\right\vert }{\left\vert T_{i}\right\vert }\leq t
\]
for every $i<n$, then%
\[
\frac{\left\vert E\right\vert }{\left\vert X\right\vert }\leq\frac
{mt}{1+\left(  m-1\right)  t}{.}%
\]

\end{lemma}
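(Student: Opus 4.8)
The plan is to convert the per-set hypothesis into a single inequality by double counting with the covering multiplicity function. Define $c\colon X\to\mathbb{N}$ by $c(x)=\sum_{i<n}\chi_{T_i}(x)$, so that the hypothesis reads $1\le c(x)\le m$ for every $x\in X$. Interchanging the order of summation gives the two identities $\sum_{i<n}|T_i|=\sum_{x\in X}c(x)$ and $\sum_{i<n}|T_i\cap E|=\sum_{x\in E}c(x)$; these are the only structural facts about the $T_i$ that I will need.

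Next I would feed in the assumption $|T_i\cap E|\le t|T_i|$. Summing this over $i<n$ and using the two identities yields
\[
\sum_{x\in E}c(x)\le t\sum_{x\in X}c(x)=t\sum_{x\in E}c(x)+t\sum_{x\in X\setminus E}c(x),
\]
hence $(1-t)\sum_{x\in E}c(x)\le t\sum_{x\in X\setminus E}c(x)$. Now I discard all fine information about $c$ and retain only $1\le c(x)\le m$: the left-hand side is at least $(1-t)|E|$ and the right-hand side is at most $tm\bigl(|X|-|E|\bigr)$. Writing $N=|X|$ and $e=|E|$, this is $(1-t)e\le tm(N-e)$, which rearranges to $e\bigl(1+(m-1)t\bigr)\le tmN$, i.e.\ $|E|/|X|\le mt/(1+(m-1)t)$, as claimed. (The coefficient $1+(m-1)t$ is positive since $m\ge 1$ and $t>0$, so no division issue arises, and the case $E=\varnothing$ is trivial.)

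There is essentially no obstacle here; the only point worth flagging is the place where sharpness is gained. Simply summing $|T_i\cap E|\le t|T_i|$ and then bounding $\sum_{i<n}|T_i|\le m|X|$ would give only the weaker estimate $|E|/|X|\le mt$. The improvement to $mt/(1+(m-1)t)$ comes precisely from splitting $\sum_{x\in X}c(x)$ into its contributions over $E$ and over $X\setminus E$ and exploiting the \emph{lower} bound $c\ge 1$ on the part supported on $E$ — this is exactly what produces the $(m-1)t$ correction in the denominator.
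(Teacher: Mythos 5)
Your proof is correct and is essentially the paper's argument: both rest on the double count $\sum_{i<n}|T_i\cap E|\le t\sum_{i<n}|T_i|$ combined with the multiplicity bounds $c(x)\ge 1$ on $E$ and $c(x)\le m$ on $X\setminus E$, arriving at the identical inequality $|E|\bigl(1+(m-1)t\bigr)\le tm|X|$. The only difference is presentational: the paper first modifies the $T_i$ so that each point of $E$ lies in exactly one of them and each point of $X\setminus E$ in exactly $m$ (checking that this preserves the hypotheses), whereas you keep the general multiplicity function and use the inequalities directly, which neatly sidesteps that normalization step.
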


\begin{proof}
We may assume that each $x\in E$ is in only one of the $T_{i}$ and that each
$x\in X\left\backslash E\right.  $ is in $m$ of the $T_{i}$ since removing
elements of $E$ from all but one of the $T_{i}$ or adding elements of
$X\left\backslash E\right.  $ to any of the $T_{i}$ (if that element is in
fewer than $m$ of them) maintains the hypotheses without changing the
conclusion. \ Then
\[
\left\vert E\right\vert \leq t\sum_{i<n}\left\vert T_{i}\right\vert
=t(m(\left\vert X\right\vert -\left\vert E\right\vert )+\left\vert
E\right\vert )=t(m\left\vert X\right\vert -(m-1)\left\vert E\right\vert )
\]

so that
\[
\left\vert E\right\vert (1+(m-1)t)\leq tm\left\vert X\right\vert
\]

which yields the desired result.
\end{proof}

If $E$ is an internal subset of $^{\ast}\mathbb{Z}^{d}$ and $x\in{}^{\ast
}\mathbb{Z}^{d}$ define%
\[
d_{E}(x):=\liminf_{\nu>U}\mu_{x+[-\nu,\nu]^{d}}\left(  \left(  E+\mathcal{U}%
^{d}\right)  \cap\left(  x+\left[  -\nu,\nu\right]  ^{d}\right)  \right)
\text{, }%
\]
where $\liminf_{\nu>U}$ means $\sup_{\xi>U}\inf_{U<\nu<\xi}$. \ Observe that
if $x\in$ $^{\ast}\mathbb{Z}^{d}$ and $y\in U^{d}$ then%
\[
d_{E}\left(  x+y\right)  =d_{E}\left(  x-y\right)  =d_{E}(x)\text{.}%
\]

The proof of the next theorem is based on the proof of the Lebesgue Density
Theorem given in \cite{faure}.

\begin{theorem}
\label{thm:LDTtheorem2}Let $H\in$ $^{\ast}\mathbb{N}\left\backslash
\mathbb{N}\right.  $ and $E$ be an internal subset of $[-H,H]^{d}$. Then%
\[
\ \mu_{[-H,H]^{d}}\left(  \left\{  x\in E+\mathcal{U}^{d}:d_{E}(x)<1\right\}
\right)  =0.
\]

\end{theorem}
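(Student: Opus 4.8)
The plan is to run the Vitali-covering proof of the classical Lebesgue density theorem (in the form of \cite{faure}), working throughout with Loeb \emph{outer} measure, so that no measurability of $E+\mathcal{U}^{d}$ or of the exceptional set is needed — a set of outer measure $0$ is automatically Loeb measurable and null. Write $\mu=\mu_{[-H,H]^{d}}$, write $\mu^{*}$ for the associated outer measure, and for rational $t\in(0,1)$ put $B_{t}=\{x\in E+\mathcal{U}^{d}:d_{E}(x)<t\}$. Since the exceptional set is $\bigcup_{t}B_{t}$ over rational $t<1$, it suffices to prove $\mu^{*}(B_{t})=0$ for each such $t$. I would first record two facts about the cut: a nonempty cut $U$ has no largest element and $U^{c}$ has no least element, so $z\in E+\mathcal{U}^{d}$ iff $\min_{e\in E}\lVert z-e\rVert_{\infty}\le\mu$ for \emph{every} $\mu>U$; hence for any box $C$ one has $(E+\mathcal{U}^{d})\cap C=\bigcap_{\mu>U}\bigl((E+[-\mu,\mu]^{d})\cap C\bigr)$, and in particular $\mu_{C}^{*}\bigl((E+\mathcal{U}^{d})\cap C\bigr)\le\operatorname{st}\bigl(\lvert(E+[-\mu,\mu]^{d})\cap C\rvert/\lvert C\rvert\bigr)$ for every $\mu>U$.

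Next I would localize. Given $\varepsilon>0$, use outer regularity of Loeb outer measure through internal sets to choose an internal $I\supseteq B_{t}$ with $\mu(I)\le\mu^{*}(B_{t})+\varepsilon$. Because $B_{t}+\mathcal{U}^{d}=B_{t}$, for each $x\in B_{t}$ the internal set $\{\mu\in[1,H]:x+[-\mu,\mu]^{d}\subseteq I\}$ contains $U$, hence by overspill contains some $\mu_{x}>U$. Unwinding $d_{E}(x)<t$: for every $\xi>U$ there is $\nu\in(U,\xi)$ with $\mu_{x+[-\nu,\nu]^{d}}^{*}\bigl((E+\mathcal{U}^{d})\cap(x+[-\nu,\nu]^{d})\bigr)<t$; taking $\xi=\mu_{x}$ and using the previous paragraph together with regularity, I obtain a ``bad box'' $C_{x}=x+[-\nu_{x},\nu_{x}]^{d}\subseteq I$ and an internal set $D_{x}$ with $(E+\mathcal{U}^{d})\cap C_{x}\subseteq D_{x}\subseteq C_{x}$ and $\lvert D_{x}\rvert<t\lvert C_{x}\rvert$, and this persists for all sufficiently small admissible $\nu_{x}$ above $U$. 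Thus $\{C_{x}:x\in B_{t}\}$ covers $B_{t}$ by internal boxes lying inside $I$, with arbitrarily small boxes (relative to the cut $U$) available at each point.

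The heart of the matter is a covering/selection step: extract from $\{C_{x}:x\in B_{t}\}$ a countable subfamily $C_{1},C_{2},\dots$ that still covers $B_{t}$ up to a Loeb-null set and has bounded overlap $\sum_{n}\chi_{C_{n}}\le m$ for a dimensional constant $m=m(d)$ — a Besicovitch/Vitali-type selection adapted to $\ell^{\infty}$-boxes and to the cut $U$. This is where Lemma \ref{lemma:approximate tilings dimension N} enters: applying it to the finite truncations $X_{N}=\bigcup_{n\le N}C_{n}$, with the $T_{i}$ the boxes $C_{n}$ ($n\le N$) and with a single internal set $\supseteq B_{t}\cap X_{N}$ that is $t$-sparse in each $C_{n}$, manufactured from the $D_{n}$'s, gives $\mu^{*}(B_{t}\cap X_{N})\le\frac{mt}{1+(m-1)t}\,\mu(X_{N})\le\frac{mt}{1+(m-1)t}\bigl(\mu^{*}(B_{t})+\varepsilon\bigr)$. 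Loeb outer measure is continuous from below (via measurable hulls), so letting $N\to\infty$ yields $\mu^{*}(B_{t})\le\frac{mt}{1+(m-1)t}\bigl(\mu^{*}(B_{t})+\varepsilon\bigr)$, and then $\varepsilon\to0$ gives $\mu^{*}(B_{t})\le\frac{mt}{1+(m-1)t}\,\mu^{*}(B_{t})$. Since $\frac{mt}{1+(m-1)t}<1$ for every $t<1$ and every $m$, we conclude $\mu^{*}(B_{t})=0$, which is the theorem.

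I expect the selection step to be the real obstacle. The boxes $x+[-\nu,\nu]^{d}$ with $\nu>U$ occur at a wide range of (possibly infinite) scales, and the Besicovitch/Vitali selection must be carried out for a family indexed by the \emph{external} set $B_{t}$; two points need care: (i) the chosen subfamily must cover $B_{t}$ up to a null set, not merely cover finite subsets (which carry no measure information), and (ii) one must produce a \emph{single} internal set containing a hull of $B_{t}\cap X_{N}$ that is $t$-sparse in every chosen box, so that Lemma \ref{lemma:approximate tilings dimension N} applies — this is delicate precisely because $U$ need have neither countable cofinality nor countable coinitiality of its complement, so one cannot in general replace $E+\mathcal{U}^{d}$ by a single internal enlargement $E+[-\mu,\mu]^{d}$ with $\mu>U$. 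If the selected boxes can be taken pairwise \emph{disjoint} (a Vitali-type conclusion), then (ii) disappears: the $D_{n}$ restricted to the disjoint boxes assemble into the required set, Lemma \ref{lemma:approximate tilings dimension N} reduces to its trivial case $m=1$, and the displayed bound becomes $\mu^{*}(B_{t})\le t\,\mu^{*}(B_{t})$.
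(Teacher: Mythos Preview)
Your overall plan---Faure's outer-measure argument, with an internal envelope $I\supseteq B_{t}$ and bad boxes inside $I$---is exactly the paper's, and you have correctly located the crux in the selection step. The idea you are missing is that the paper never performs an external selection at all: it pushes the entire covering problem into the internal world and then invokes Besicovitch and Lemma~\ref{lemma:approximate tilings dimension N} \emph{by transfer}.

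Here is the move. Having chosen an internal $D\supseteq B_{t}$ with $\mu(D)\le\mu^{*}(B_{t})+\varepsilon$ (your $I$), the paper introduces the internal set
\[
R_{+}=\Bigl\{\,x\in E:\ \exists\,z\in[1,H],\ x+[-z,z]^{d}\subset D\ \text{and}\ \frac{|E\cap(x+[-z,z]^{d})|}{(2z+1)^{d}}<t\,\Bigr\},
\]
which makes no mention of $U$ or of $E+\mathcal{U}^{d}$. Your own overspill argument shows $B_{t}\cap E\subseteq R_{+}$. Now cover not $B_{t}$ but all of $D$: for $x\in R_{+}$ use the bad box supplied by the definition; for $x\in D\setminus R_{+}$ use the radius-$\tfrac12$ box, i.e.\ the singleton $\{x\}$. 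In every box the set $R_{+}$ has density at most $t$---in bad boxes because $R_{+}\subseteq E$ and $E$ is $t$-sparse there, in singletons because the center lies outside $R_{+}$. This is an internal cover of an internal set by an internally definable family of boxes, so the Besicovitch covering theorem transfers to give a \emph{hyperfinite} subcollection covering $D$ with overlap at most $4^{d}$, and Lemma~\ref{lemma:approximate tilings dimension N} transfers (with $X=D$ and sparse set $R_{+}$) to give $|R_{+}|/|D|\le\frac{4^{d}t}{1+(4^{d}-1)t}$ exactly, not merely in the limit.

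Both of your obstacles vanish. For (i), nothing external is being covered: the cover is of $D$, it is hyperfinite, and there is no ``up to a null set'' caveat. For (ii), the single sparse set is $R_{+}$ itself---internal by design and a subset of $E$, hence automatically $t$-sparse in every bad box; your $D_{x}$'s never need to be assembled. The singleton-box padding on $D\setminus R_{+}$ is what lets one work over all of $D$ while keeping $R_{+}$ sparse in every selected box. All the difficulties you anticipated about the cofinality of $U$ and about extracting a countable family from an externally indexed collection are simply bypassed.
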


\begin{proof}
We will write simply $\mu$ for\ $\mu_{[-H,H]^{d}}$. \ Until we are able to
show that the outer measure of $\left\{  x\in E+\mathcal{U}^{d}:d_{E}%
(x)<1\right\}  $ is 0, it is not clear that the set is measurable. \ To show
this, we fix $t\in\left(  0,1\right)  $, and prove that the set%
\[
R=\left\{  x\in E+\mathcal{U}^{d}:d_{E}(x)<t\right\}
\]
has outer measure $0$. \ For any $\epsilon>0$ \ we may pick an internal subset
$D$ of $[-H,H]^{d}$ containing $R$ such that%
\[
\mu\left(  D\right)  \leq\mu^{\ast}\left(  R\right)  +\epsilon{.}%
\]

We will show that\
\[
\mu\left(  D\right)  \leq\frac{\epsilon}{1-\frac{4^{d}t}{(4^{d}-1)t+1}},
\]
which can be made arbitrarily small by making $\epsilon$ small. \ This yields
the desired result since $R\subseteq D$.

Define%
\[
R_{+}=\left\{  x\in E:\exists z\in{}[1,H],x+\left[  -z,z\right]  ^{d}\subset
D\text{ and }\frac{\left\vert E\cap(x+\left[  -z,z\right]  ^{d})\right\vert
}{\left(  2z+1\right)  ^{2}}<t\right\}
\]
and observe that $R_{+}$ is an internal subset of $E$ containing $R\cap E$. We
can now cover every point $x$ in $D$ by\ open hypercubes of the form
$(x+(-y,y)^{d}),$ such that
\[
\frac{\left\vert R_{+}\cap(x+(-y,y)^{d})\right\vert }{\left(  2y+1\right)
^{2}}\leq t,
\]
by letting $y=z+\frac{1}{2}$ if $x\in R_{+}$ and $y=\frac{1}{2}$ if $x\in
D\left\backslash R_{+}\right.  $. \ By the Besicovitch Covering Theorem (see,
for example, \cite[page 483]{Jones}) there exists a hyperfinite sequence
$\left(  S_{i}\right)  _{i\in I}$ of these hypercubes such that for every
$x\in D$%
\[
1\leq\sum_{i\in I}\chi_{S_{i}}(x)\leq4^{d}.
\]

An application of Lemma \ref{lemma:approximate tilings dimension N} shows that%
\[
\frac{\left\vert R_{+}\right\vert }{\left\vert D\right\vert }\leq\frac{4^{d}%
t}{(4^{d}-1)t+1}.
\]
It follows that%
\begin{align*}
\mu\left(  D\right)   &  \leq\mu\left(  R_{+}\right)  +\epsilon\\
&  \leq\frac{4^{d}t}{(4^{d}-1)t+1}\mu\left(  D\right)  +\epsilon
\end{align*}
and hence%
\[
\mu\left(  D\right)  \leq\frac{\epsilon}{1-\frac{4^{d}t}{(4^{d}-1)t+1}}%
\]
as desired, showing that the outer measure is 0.

By the completeness of the Loeb measure we obtain the desired result.
\end{proof}

Suppose that $U$ is a cut in $[1,H]$.\ We say that a subset $S$ of
$[-H,H]^{d}$ is\newline$\emph{U}$\emph{-hereditarily measurable} iff for every
$x\in[-H,H]^{d}$ and every $U\mathbb{<}\nu<H$,
\[
(S+\mathcal{U}^{d})\cap(x+[-\nu,\nu]^{d})\text{ is }\mu_{x+[-\nu,\nu]^{d}%
}-\text{measurable.}%
\]
For $x\in[-H,H]^{d}$ and $S\subseteq[-H,H]^{d}$ $U-$hereditarily measurable we
define:%
\[
d_{S}^{U}(x)=\liminf_{\nu>U}\mu_{x+[-\nu,\nu]^{d}}\left(  (S+\mathcal{U}%
^{d})\cap(x+\left[  -\nu,\nu\right]  ^{d})\right)  \text{.}%
\]
\ We note that since $S$ is hereditarily measurable $d_{S}^{U}$ is
well-defined. \ If $U=\mathbb{N}$ and $S$ is internal this definition agrees
with the definition given in Section \ref{Section: Points of Density}.
\ Equivalently, we adopt the convention that if $U=\mathbb{N}$ we simply write
$d_{S}(x)$ for $d_{S}^{\mathbb{N}}(x).$ \ As in Section
\ref{Section: Points of Density} we say that $x$ is a point of density of $S$
iff $d_{S}^{U}(x)=1,$ and we write $\mathcal{D}_{S}^{U}$ for the set of all
points of density of $S$ with respect to the cut $U$.

We say that a cut $U$ has $\emph{countable}$ $\emph{cofinality}$ iff there
exists an increasing sequence $x_{n}\in$ $^{\ast}\mathbb{N}$ such that
$\bigcup_{n\in\mathbb{N}}[1,x_{n}]=U$, and that $U$ has $\emph{countable}$
$\emph{coinitiality}$ iff there exists a decreasing sequence $x_{n}\in$
$^{\ast}\mathbb{N}$ such that
\[
\bigcap_{n\in\mathbb{N}}[1,x_{n}]=U.\
\]

\begin{proposition}
For any cut $U$ in $[1,H]$, every internal set contained in $[-H,H]^{d}$ is
$U-$hereditarily measurable. \ 
\end{proposition}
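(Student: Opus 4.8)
The plan is to fix an internal set $E\subseteq[-H,H]^{d}$, a point $x\in[-H,H]^{d}$, and a $\nu$ with $U<\nu<H$, and to show directly that $(E+\mathcal{U}^{d})\cap I$ is $\mu_{I}$-measurable, where $I=x+[-\nu,\nu]^{d}$ carries its own Loeb measure $\mu_{I}$. The case $U=\varnothing$ is trivial, since then $\mathcal{U}=\{0\}$ and $E+\mathcal{U}^{d}=E$ is internal; so assume $U\neq\varnothing$, whence $1\in U$ and, $U$ being nonempty and closed under addition, $U$ has no greatest element. The set $E+\mathcal{U}^{d}$ is typically external, but it is sandwiched between internal sets coming from honest intervals: since $U$ is an initial segment of $[1,H]$ closed under addition, one checks directly that $E+\mathcal{U}^{d}=\bigcup_{\eta\in U}(E+[-\eta,\eta]^{d})$, while $E+\mathcal{U}^{d}\subseteq E+[-\xi,\xi]^{d}$ for every $\xi>U$. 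Each set $(E+[-\zeta,\zeta]^{d})\cap I$ is internal, hence $\mu_{I}$-measurable, and $f(\zeta):=\lvert(E+[-\zeta,\zeta]^{d})\cap I\rvert/\lvert I\rvert$ defines an internal non-decreasing function $[1,H]\to{}^{\ast}[0,1]$.

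The heart of the matter is the following fact about internal monotone functions relative to a cut: writing $s:=\sup_{\eta\in U}\operatorname{st}f(\eta)$ and $s':=\inf_{\xi>U}\operatorname{st}f(\xi)$, one has $s=s'$. Monotonicity gives $s\leq s'$ at once. If the inequality were strict, choose a standard real $c$ with $s<c<s'$ and set $A:=\{\zeta\in[1,H]:f(\zeta)<c\}$. Then $A$ is internal, and since $f$ is non-decreasing $A$ is an initial segment of $[1,H]$; moreover every $\eta\in U$ has $\operatorname{st}f(\eta)\leq s<c$, so $f(\eta)<c$, i.e.\ $\eta\in A$, and every $\xi>U$ has $\operatorname{st}f(\xi)\geq s'>c$, so $f(\xi)>c$, i.e.\ $\xi\notin A$; hence $A=U$. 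But $U$ has no greatest element (if $m=\max U$ then $2m\in U$, impossible since $m\geq1$), while every nonempty internal subset of $[1,H]$ does; so $U$ is external and cannot equal the internal set $A$, a contradiction. I expect this to be the main obstacle: it is the one place where the special structure of cuts---closure under addition, and hence externality---is used, and notably it needs no assumption on the cofinality or coinitiality of $U$.

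It remains to assemble the pieces with standard facts about the Loeb measure. Using $s=s'$, pick $\eta_{n}\in U$ with $\mu_{I}((E+[-\eta_{n},\eta_{n}]^{d})\cap I)\to s$ and $\xi_{n}>U$ with $\mu_{I}((E+[-\xi_{n},\xi_{n}]^{d})\cap I)\to s$, and put $G^{-}:=\bigcup_{n}((E+[-\eta_{n},\eta_{n}]^{d})\cap I)$ and $G^{+}:=\bigcap_{n}((E+[-\xi_{n},\xi_{n}]^{d})\cap I)$, both Loeb measurable. Because $[-\eta_{n},\eta_{n}]\subseteq\mathcal{U}$ and $\mathcal{U}\subseteq[-\xi_{n},\xi_{n}]$ for all $n$, we get $G^{-}\subseteq(E+\mathcal{U}^{d})\cap I\subseteq G^{+}$; and since $\mu_{I}((E+[-\xi,\xi]^{d})\cap I)\geq s$ for every $\xi>U$ (it dominates $\mu_{I}((E+[-\eta,\eta]^{d})\cap I)$ for all $\eta\in U$) while $\mu_{I}((E+[-\eta_{n},\eta_{n}]^{d})\cap I)\leq s$, countable additivity on the probability space $I$ forces $\mu_{I}(G^{-})=\mu_{I}(G^{+})=s$. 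Thus $(E+\mathcal{U}^{d})\cap I$ lies between $G^{-}$ and $G^{+}$, which differ by a $\mu_{I}$-null set, so by completeness of the Loeb measure it is $\mu_{I}$-measurable. As $x$ and $\nu>U$ were arbitrary, $E$ is $U$-hereditarily measurable.
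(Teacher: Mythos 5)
Your proof is correct, and it takes a genuinely different route from the paper's. The paper splits into three cases according to the cofinality and coinitiality of $U$: when $U$ has countable cofinality (resp.\ coinitiality) it writes $E+\mathcal{U}^{d}$ directly as a countable union (resp.\ intersection) of internal sets, and in the remaining case it sets $\gamma=\inf\{\mu(E+[-K,K]^{d}):K>U\}$, $\delta=\sup\{\mu(E+[-K,K]^{d}):K<U\}$ and derives $\gamma=\delta$ from an overspill-type argument that exploits the uncountable coinitiality to make the measure constant on an interval just above $U$. You instead prove the same no-jump fact (your $s=s'$ is the paper's $\delta=\gamma$) uniformly for all cuts: the sublevel set $\{\zeta:f(\zeta)<c\}$ of the internal monotone function $f$ would have to coincide with $U$, contradicting the externality of $U$ (no greatest element, by closure under addition) -- the same externality of $U$ that the paper invokes only in its third case via the internal set $X$. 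Your approach buys a case-free argument showing that no cofinality or coinitiality hypotheses are ever needed, and it also spells out the sandwiching step ($G^{-}\subseteq(E+\mathcal{U}^{d})\cap I\subseteq G^{+}$ with $\mu_{I}(G^{-})=\mu_{I}(G^{+})$, then completeness) that the paper leaves implicit behind ``it suffices to show that $\gamma=\delta$''; what the paper's first two cases buy in exchange is the slightly stronger structural description of $E+\mathcal{U}^{d}$ as an actual countable union or intersection of internal sets when the cut has countable cofinality or coinitiality. One tiny quibble: in the final step it is cleanest to note $\mu_{I}(G^{-})\geq s$ because $G^{-}$ contains internal sets of measure arbitrarily close to $s$, and $\mu_{I}(G^{+})\leq s$ by the choice of the $\xi_{n}$, which together with $G^{-}\subseteq G^{+}$ pins both measures at $s$; ``countable additivity'' alone is slightly terse, but the inequalities you state do yield this.
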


\begin{proof}
Since any internal set intersected with any $x+[-\nu,\nu]^{d}$ is internal, it
suffices to show that for any internal set $E$, $E+\mathcal{U}^{d}$ is
$\mu_{x+[-\nu,\nu]^{d}}-$measurable. \ We will simply write $\mu$ for
$\mu_{x+[-\nu,\nu]^{d}}$. \ If $U$ has countable cofinality then
$E+\mathcal{U}^{d}$ is a countable union of internal sets of the form
$E\pm\lbrack1,x_{n}]^{d}$ and so is measurable. \ If $U$ has countable
coinitiality then $E+\mathcal{U}^{d}$ is a countable intersection of internal
sets of the form $E\pm\lbrack1,x_{n}]^{d}$ and so is measurable. \ \ So, we
assume that $U$ has neither countable coinitiality nor countable cofinality.
\ Let
\[
\gamma=\inf\left\{  \mu\left(  E+[-K,K]^{d}\right)  :K>U\right\}
\]
and
\[
\delta=\sup\left\{  \mu\left(  E+[-K,K]^{d}\right)  :K<U\right\}  ,
\]
It suffices to show that $\gamma=\delta$. \ Assume the contrary, that
$\gamma>\delta$. \ 

Let $K_{n}>U$ be decreasing such that
\[
\mu(E+[-K_{n},K_{n}]^{d})<\gamma+1/n
\]
for all $n\in\mathbb{N}$. \ Since the coinitiality of $U$ is uncountable,
there exists $K^{\prime}>U$ such that
\[
\text{for any }K\leq K^{\prime}\text{ and }K>U\text{, }\mu(E+[-K,K]^{d}%
)=\gamma.
\]
Symmetrically, we can find a $K^{\prime\prime}<U$ such that
\[
\text{for any }K\geq K^{\prime\prime}\text{ and }K<U\text{, }\mu
(E+[-K,K]^{d})=\delta.
\]

Let $\eta=\frac{1}{2}(\gamma+\delta)$, and let
\[
X=\left\{  K\in\lbrack K^{\prime\prime},K^{\prime}]:\left\vert E+[-K,K]^{d}%
\right\vert /(2\nu+1)^{d}\leq\eta\right\}  .
\]
Then $X$ is internal and $U\cap\lbrack K^{\prime\prime},K^{\prime}]\subseteq
X$. \ So, $X\cap\left(  \lbrack K^{\prime\prime},K^{\prime}]\smallsetminus
U\right)  $ is nonempty. \ Let $K\leq K^{\prime}$ and $K>U$ be such that
\[
\mu\left(  E+[-K,K]^{d}\right)  \approx\left\vert E+[-K,K]^{d}\right\vert
/(2\nu+1)^{d}\leq\eta.
\]
This contradicts the fact that $\gamma>\eta$.
\end{proof}

\begin{proposition}
Let $H\in$ $^{\ast}\mathbb{N}\left\backslash \mathbb{N}\right.  $, $U$ be a
cut in $[1,H]$, and $S\subseteq[-H,H]^{d}$ be hereditarily measurable. \ Then
the set%
\[
\left\{  x\in S+\mathcal{U}^{d}:d_{S}(x)<1\right\}
\]
has Loeb measure zero relative to $[-H,H]^{d}$.
\end{proposition}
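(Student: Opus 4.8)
The plan is to deduce the statement from Theorem~\ref{thm:LDTtheorem2} by approximating $S$ from below by internal sets, rather than re-running the Besicovitch covering argument (which would also work, with $S$ in place of $E$ throughout, but is less economical). Throughout, write $d_{S}^{U}$ for the density function appearing in the statement.

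I would first record two routine preliminaries. Put $T=(S+\mathcal{U}^{d})\cap[-H,H]^{d}$. Since $U$ is closed under addition one has $\mathcal{U}+\mathcal{U}=\mathcal{U}$, hence $\mathcal{U}^{d}+\mathcal{U}^{d}=\mathcal{U}^{d}$ and so $T+\mathcal{U}^{d}\subseteq T$; in particular every subset of $T$ has all of its $U$-monads contained in $T$. Moreover $T$ is $\mu_{[-H,H]^{d}}$-measurable: one covers $[-H,H]^{d}$ by finitely many cubes $C_{j}=x_{j}+[-\nu,\nu]^{d}\subseteq[-H,H]^{d}$ with $U<\nu=\lfloor H/2\rfloor<H$, each $(S+\mathcal{U}^{d})\cap C_{j}$ is $\mu_{C_{j}}$-measurable by hereditary measurability of $S$, $\mu_{C_{j}}$-measurable subsets of $C_{j}$ are $\mu_{[-H,H]^{d}}$-measurable, and $T$ is their finite union. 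By the preceding proposition every internal set is $U$-hereditarily measurable, so the same reasoning shows $E+\mathcal{U}^{d}$ is measurable for every internal $E\subseteq[-H,H]^{d}$, and $d_{E}^{U}$ makes sense and agrees with the $d_{E}$ of Theorem~\ref{thm:LDTtheorem2}.

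Next I would fix $\delta>0$ and, since Loeb measurable sets are approximable from below by internal sets, choose an internal $E\subseteq T$ with $\mu_{[-H,H]^{d}}(E)>\mu_{[-H,H]^{d}}(T)-\delta$. From $T+\mathcal{U}^{d}\subseteq T$ we get $E\subseteq E+\mathcal{U}^{d}\subseteq T$, hence
\[
\mu_{[-H,H]^{d}}\bigl(T\setminus(E+\mathcal{U}^{d})\bigr)\leq\mu_{[-H,H]^{d}}(T)-\mu_{[-H,H]^{d}}(E)<\delta .
\]
Also $E+\mathcal{U}^{d}\subseteq S+\mathcal{U}^{d}$, so for every $x$ and every $U<\nu<H$ the measurable set $(E+\mathcal{U}^{d})\cap(x+[-\nu,\nu]^{d})$ is contained in the measurable set $(S+\mathcal{U}^{d})\cap(x+[-\nu,\nu]^{d})$; taking $\sup_{\xi>U}\inf_{U<\nu<\xi}$ of the corresponding $\mu_{x+[-\nu,\nu]^{d}}$-values gives $d_{E}^{U}(x)\leq d_{S}^{U}(x)$ for every $x$.

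Finally I would apply Theorem~\ref{thm:LDTtheorem2} to the internal set $E$, which gives that $\{x\in E+\mathcal{U}^{d}:d_{E}^{U}(x)<1\}$ is Loeb null; by $d_{E}^{U}\leq d_{S}^{U}$ the set $\{x\in E+\mathcal{U}^{d}:d_{S}^{U}(x)<1\}$ is contained in it and hence also null. Therefore
\[
\{x\in T:d_{S}^{U}(x)<1\}\subseteq\bigl(T\setminus(E+\mathcal{U}^{d})\bigr)\cup\{x\in E+\mathcal{U}^{d}:d_{S}^{U}(x)<1\}
\]
has outer measure at most $\delta$, and since $\delta>0$ is arbitrary this set has outer measure $0$, hence Loeb measure $0$ by completeness. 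The one point requiring care is the monotonicity $d_{E}^{U}\leq d_{S}^{U}$: though $S+\mathcal{U}^{d}$ is typically external, passing to an internal $E\subseteq S+\mathcal{U}^{d}$ costs at most $\delta$ in measure while the full-density conclusion of Theorem~\ref{thm:LDTtheorem2} for $E$ is automatically inherited by $d_{S}^{U}$ on $E+\mathcal{U}^{d}$; the remaining claims ($\mathcal{U}^{d}+\mathcal{U}^{d}=\mathcal{U}^{d}$, and measurability of $T$ and of the sets $E+\mathcal{U}^{d}$) are routine from the hypotheses and the earlier proposition.
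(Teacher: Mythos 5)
Your proof is correct and follows essentially the same route as the paper's: approximate $S+\mathcal{U}^{d}$ from below by an internal set $E$, apply Theorem~\ref{thm:LDTtheorem2} to $E$, note the exceptional set is covered by the null set for $E$ together with a set of outer measure less than $\epsilon$, and conclude by arbitrariness of $\epsilon$ and completeness of the Loeb measure. The extra details you supply (measurability of $(S+\mathcal{U}^{d})\cap[-H,H]^{d}$ via finitely many cubes, $\mathcal{U}^{d}+\mathcal{U}^{d}=\mathcal{U}^{d}$, and the monotonicity $d_{E}^{U}\leq d_{S}^{U}$) are points the paper leaves implicit, and they are handled correctly.
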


\begin{proof}
Fix an $\epsilon>0$. \ Since $S+\mathcal{U}^{d}$ is measurable there exists
$E\subseteq S+\mathcal{U}^{d}$ such that $E$ is internal and $\mu_{[-H,H]^{d}%
}(S\left\backslash E\right.  )<\epsilon$. \ Then
\[
\left\{  x\in S+\mathcal{U}^{d}:d_{S}(x)<1\right\}  \subseteq\left\{  x\in
E+\mathcal{U}^{d}:d_{E}(x)<1\right\}  \cup(S+\mathcal{U}^{d}\left\backslash
E\right.  ).
\]

It follows that the outer measure of
\[
\left\{  x\in S+\mathcal{U}^{d}:d_{S}(x)<1\right\}
\]
is at most $\epsilon$. Since $\epsilon$ is arbitrary, the outer measure is 0,
and the result follows by the completeness of the Loeb measure.
\end{proof}

Corollary \ref{Corollary: LDT} is a generalization of Theorem
\ref{thm:LDTtheorem1}.

\begin{corollary}
\label{Corollary: LDT} If $E$ is an internal subset of $[-H,H]^{d}$ and $U$ is
a cut in $[1,H]$ then $\mathcal{D}_{E}^{U}$ is $\mu_{[-H,H]^{d}}$-measurable,
and $\mu_{[-H,H]^{d}}(\mathcal{D}_{E}^{U})=\mu_{[-H,H]^{d}}(E+\mathcal{U}%
^{d})$.
\end{corollary}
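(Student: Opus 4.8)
The plan is to show that $\mathcal{D}_E^U$ agrees with $E+\mathcal{U}^d$ up to a set of Loeb measure zero. Since $E$ is internal, $E+\mathcal{U}^d$ is $\mu_{[-H,H]^d}$-measurable by the proposition asserting that every internal subset of $[-H,H]^d$ is $U$-hereditarily measurable (apply it with $x=0$ and $\nu\nearrow H$), so once the ``differ by a null set'' claim is in hand, both conclusions of the corollary follow at once from completeness of the Loeb measure.

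One half is immediate: $(E+\mathcal{U}^d)\setminus\mathcal{D}_E^U=\{x\in E+\mathcal{U}^d:d_E^U(x)<1\}$ is Loeb-null by Theorem~\ref{thm:LDTtheorem2}. The substance lies in bounding $\mathcal{D}_E^U\setminus(E+\mathcal{U}^d)$, and here I would introduce $F:=[-H,H]^d\setminus(E+\mathcal{U}^d)$. The structural observation is that $E+\mathcal{U}^d$ is a union of $U$-monads — from $\mathcal{U}+\mathcal{U}=\mathcal{U}$ one gets $(E+\mathcal{U}^d)+\mathcal{U}^d=E+\mathcal{U}^d$ within $[-H,H]^d$ — and hence so is $F$; thus $(F+\mathcal{U}^d)\cap[-H,H]^d=F$, and $F$ is $U$-hereditarily measurable, because for each box $B=x+[-\nu,\nu]^d$ the set $F\cap B=B\setminus(E+\mathcal{U}^d)$ is $\mu_B$-measurable by hereditary measurability of the internal set $E$.

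Next I would show $\mathcal{D}_E^U\cap F\subseteq\{x\in F:d_F^U(x)<1\}$. Fix $x\in[-H,H]^d$ and $U<\nu<H$; the box $x+[-\nu,\nu]^d$ is partitioned by the $\mu_{x+[-\nu,\nu]^d}$-measurable sets $F\cap(x+[-\nu,\nu]^d)$ and $(E+\mathcal{U}^d)\cap(x+[-\nu,\nu]^d)$, so their measures sum to $1$. Hence, if $x\in F$ satisfies $d_F^U(x)=1$, then, using $F+\mathcal{U}^d=F$ on $[-H,H]^d$, the quantities $\mu_{x+[-\nu,\nu]^d}\bigl((E+\mathcal{U}^d)\cap(x+[-\nu,\nu]^d)\bigr)$ are $<\epsilon$ throughout some nonempty initial stretch $U<\nu<\xi_\epsilon$, for every $\epsilon>0$; since any $\xi>U$ overlaps that stretch, this forces $d_E^U(x)=0$, so $x\notin\mathcal{D}_E^U$. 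Applying the preceding proposition to $S=F$, the set $\{x\in F+\mathcal{U}^d:d_F^U(x)<1\}=\{x\in F:d_F^U(x)<1\}$ is Loeb-null, so $\mathcal{D}_E^U\cap F=\mathcal{D}_E^U\setminus(E+\mathcal{U}^d)$ is Loeb-null. Combining the two parts, $\mathcal{D}_E^U$ and $E+\mathcal{U}^d$ differ by a subset of a Loeb-null set, which gives measurability of $\mathcal{D}_E^U$ and $\mu_{[-H,H]^d}(\mathcal{D}_E^U)=\mu_{[-H,H]^d}(E+\mathcal{U}^d)$.

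The place where a naive attempt breaks down — and the reason I work with $F$ rather than the internal complement $[-H,H]^d\setminus E$ — is that a point can be a density point of both an internal set and its complement (when $U=\mathbb{N}$, every point is a density point of the evens and of the odds), so the classical trick of playing $\mathcal{D}_E$ against $\mathcal{D}_{E^c}$ is unavailable; passing to $F$, which is already a union of monads, restores the exact complementarity of the two measures used in paragraph three. Beyond that, the only care needed is to keep the reading $\liminf_{\nu>U}=\sup_{\xi>U}\inf_{U<\nu<\xi}$ in mind when deducing $d_E^U(x)=0$ from $d_F^U(x)=1$, and to note that the $\mathcal{U}^d$-width discrepancy between $F$ and $F+\mathcal{U}^d$ near the boundary of $[-H,H]^d$ is $\mu$-null and harmless.
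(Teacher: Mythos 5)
Your proof is essentially correct, but it takes a genuinely different and heavier route than the paper, whose entire proof is the identity $\mathcal{D}_{E}^{U}=(E+\mathcal{U}^{d})\setminus\{x\in E+\mathcal{U}^{d}:d_{E}(x)<1\}$ together with Theorem \ref{thm:LDTtheorem2} and the measurability of $E+\mathcal{U}^{d}$. The point you were guarding against cannot in fact occur for internal $E$: if $(x+\mathcal{U}^{d})\cap E=\varnothing$, then the internal set $\{M\in[1,H]:(x+[-M,M]^{d})\cap E=\varnothing\}$ contains the external cut $U$ and hence, by overspill, some $M>U$; since $M/2>U$ (closure of $U$ under addition), every $\nu$ with $U<\nu\leq M/2$ satisfies $(E+\mathcal{U}^{d})\cap(x+[-\nu,\nu]^{d})=\varnothing$, and such $\nu$ exist below every $\xi>U$, so $d_{E}^{U}(x)=0$. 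Thus $\mathcal{D}_{E}^{U}\setminus(E+\mathcal{U}^{d})$ is empty, not merely null, and your evens/odds analogy is off the mark: internality of $E$ upgrades ``the monad of $x$ misses $E$'' to ``a box of radius $>U$ misses $E$.'' What your detour through $F=[-H,H]^{d}\setminus(E+\mathcal{U}^{d})$ buys is generality: it uses only monad-saturation and hereditary measurability, so it would prove the same conclusion with an arbitrary $U$-hereditarily measurable $S$ in place of the internal $E$, and for such $S$ density points outside $S+\mathcal{U}^{d}$ genuinely can exist (take $U=\mathbb{N}$, $S$ the set of points of infinite modulus, and $x=0$), though only a null set of them. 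If you keep your route, two details need tightening: $U$-hereditary measurability of $F$ requires $(F+\mathcal{U}^{d})\cap B$, not $F\cap B$, to be $\mu_{B}$-measurable for each box $B=x+[-\nu,\nu]^{d}$ --- the clean fix is that $E+\mathcal{U}^{d}$ is a union of monads, whence $F+\mathcal{U}^{d}=([-H,H]^{d}+\mathcal{U}^{d})\setminus(E+\mathcal{U}^{d})$, and both pieces are measurable in every box by the proposition on internal sets; and your claim that $F\cap B$ and $(E+\mathcal{U}^{d})\cap B$ partition $B$ fails when $B$ protrudes beyond $[-H,H]^{d}$, but disjointness of $F+\mathcal{U}^{d}$ and $E+\mathcal{U}^{d}$ is all your inequality actually needs.
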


\begin{proof}
$\mathcal{D}_{E}^{U}=(E+\mathcal{U}^{d})\left\backslash \left\{  x\in
E+\mathcal{U}^{d}:d_{E}(x)<1\right\}  \right.  $, and the conclusion follows.
\end{proof}

It would be interesting to know if the results of Section
\ref{Section: Upper syndeticity and sumsets} and Section
\ref{Section: Lower syndeticity and sumsets} generalize to more general
amenable groups.

It would also be interesting to know if the density theorem in the space of
monads of cuts other than $U=\mathbb{N}$ or some $U_{K}$ can be used to obtain
new standard results.

\end{document}